\newcommand{\p}{\partial}
\newcommand{\ol}{\overline}
\newcommand{\ul}{\underline}
\newtheorem{theorem}{Theorem}[section]
\newtheorem{lemma}[theorem]{Lemma}
 \theoremstyle{definition}
\newtheorem{definition}[theorem]{Definition}
\newtheorem{example}[theorem]{Example}
\theoremstyle{remark}
\newtheorem{remark}[theorem]{Remark}
\numberwithin{equation}{section}
\begin{document}

\title[Pogorelov Estimates]
{The Pogorelov estimates for
degenerate curvature equations}

\author{Heming Jiao}
\address{School of Mathematics and Institute for Advanced Study in Mathematics, Harbin Institute of Technology,
         Harbin, Heilongjiang 150001, China}
\email{jiao@hit.edu.cn}
\thanks{The first author is supported by the NSFC (Grant No. 12271126),
the Natural Science Foundation of Heilongjiang Province (Grant No. YQ2022A006),
and the Fundamental Research Funds for the Central Universities (Grant No. HIT.OCEF.2022030).}

\author{Yang Jiao}
\address{School of Mathematics, Harbin Institute of Technology,
         Harbin, Heilongjiang 150001, China}
\email{18b912015@stu.hit.edu.cn}

%\author{Zhizhang Wang}
%\address{School of Mathematical Sciences, Fudan University, Shanghai, China}
%\email{zzwang@fudan.edu.cn}

%\date{}

\begin{abstract}

We establish the Pogorelov type estimates for degenerate prescribed $k$-curvature equations as well as $k$-Hessian equations.
Furthermore, we investigate the interior $C^{1,1}$ regularity of the solutions for Dirichlet problems.
These techniques also enable us to improve the existence theorem for an asymptotic Plateau type problem in hyperbolic space.

{\em Keywords:} Degenerate curvature equations; Degenerate $k$-Hessian equations; Pogorelov type estimates; Asymptotic Plateau type problem.

\end{abstract}

\maketitle

\section{Introduction}

In this paper, we establish the Pogorelov type estimates for the solution of the Dirichlet problem for degenerate
$k$-curvature equations
\begin{equation}
\label{1-1}
\left\{ \begin{aligned}
   \sigma_k (\kappa [M_u]) & = f (x, u)\geq 0 \;\;\mbox{ in }~ \Omega, \\
                 u &= \varphi  \;\;\mbox{ on }~ \partial \Omega,
\end{aligned} \right.
\end{equation}
where $\Omega$ is a bounded domain in $\mathbb{R}^n$, $M_u=\{(x,u(x)); x\in\Omega\}$ is the graphic hypersurface defined by a function $u$,
$\kappa[M_u] = (\kappa_1, \ldots, \kappa_n)$ represents the principal curvatures of $M_u$ and $\sigma_{k}$ denotes the $k$-th elementary symmetric functions
\[
\sigma_{k} (\kappa) = \sum_ {i_{1} < \ldots < i_{k}}
\kappa_{i_{1}} \ldots \kappa_{i_{k}},\ \ k = 1, \ldots, n.
\]
It is worth noting that when $k=1, 2$ and $n$, the left-hand side of \eqref{1-1} represents the mean, scalar, and
Gauss curvature of the hypersurface $M_u$, respectively.

The degenerate curvature equations arise in many geometric problems, including the degenerate Weyl problem, which was extensively studied by Guan-Li \cite{GL1} and Hong-Zuily \cite{HZ},
 and the prescribed Gauss curvature measure problem, which was investigated by Guan-Li \cite{GL2}.
All of these problems can be reformulated as some degenerate Gauss curvature type curvature equations.
One of our motivations for studying the Dirichlet problem \eqref{1-1} is its relevance to the Plateau-type problems,
which can be locally expressed in the same form of \eqref{1-1} (as showing in \cite{GS02,GS04,TW02}).

Non-degenerate curvature equations have been extensively studied  by numerous authors,
(see for instance \cite{CNSIV, CNSV, GS04, GRW15, I90, I91, ILT96, LT94b, RW19, RW20, STW12, SUW12, SX17, T90, T90b, U00} and the references therein.)

Unlike the non-degenerate case, where the regularity is well-established,
the best regularity for the degenerate equations we can expect is $C^{1,1}$.
Recently, Jiao-Wang \cite{JW21} established the existence of solutions in $C^{1,1} (\ol \Omega)$
for equation \eqref{1-1} with homogenous boundary conditions, provided
\begin{equation}
\label{JJW-1}
f^{1/(k-1)} \in C^{1,1} (\mathbb{R}^n \times \mathbb{R})
\end{equation}
holds in a uniformly $k$-convex domain $\Omega$.
Another class of degenerate curvature type equations which is the combination of $\sigma_k$, has been considered by Guan-Zhang \cite{GZ}.

In the current work, we focus on the interior regularity of \eqref{1-1}.
In particular, we establish the Pogorelov-type estimates for admissible solutions of \eqref{1-1} under the condition
\begin{equation}
	\label{condition}
	f^{1/(k-1)} \in C^{1,1} (\ol \Omega\times\mathbb{R}).
\end{equation}

\begin{definition}\label{k-convex} For a domain $\Omega\subset \mathbb R^n$, a function $v\in C^2(\Omega)$ is called $k$-convex if the eigenvalues $\lambda (x)=(\lambda_1(x), \cdots, \lambda_n(x))$ of the Hessian $D^2 v(x)$ lie in $\ol{\Gamma}_k$ for all $x\in \Omega$, where $\Gamma_k$ is the G\r{a}rding's cone
\begin{equation}\label{def G2}
\Gamma_k=\{\kappa \in \mathbb R^n \ | \quad \sigma_m(\kappa)>0,
\quad  m=1,\cdots,k\}.\nonumber
\end{equation}
%For convenience a function $v\in C^2(\Omega)$ is called $(n+1)$-convex if it is convex in $\Omega$.
\end{definition}
\begin{definition}
	A $C^2$ regular hypersurface $\Sigma\subset \mathbb{R}^{n+1}$ is called $k$-convex if its principal curvature vector
	$\kappa(X)\in \ol{\Gamma}_k$ for all $X\in \Sigma$. A function $u \in C^2 (\Omega)$ is called admissible if its graph
	is $k$-convex.
\end{definition}
It is well known that equation \eqref{1-1} is elliptic with respect to admissible solutions.

To prove the existence of admissible solutions to the Dirichlet problem \eqref{1-1}, it is necessary to assume some geometric
restrictions on the domain. A bounded domain $\Omega$ in $\mathbb{R}^n$ is called (uniformly) $(k-1)$-convex
if there exists a positive constant
$K$ such that for each $x \in \partial \Omega$,
\[
(\kappa^b_1 (x), \ldots, \kappa^b_{n - 1} (x), K) \in \overline{\Gamma}_{k} (\Gamma_{k}),
\]
where $\kappa^b_1 (x), \ldots, \kappa^b_{n - 1} (x)$ are the principal curvatures of $\partial \Omega$ at
$x$. We note that the $(n-1)$-convexity is equivalent to the usual convexity.
%When $k=n$, we define the meaning of the (uniformly) $n$-convexity is that $\partial \Omega$ being (uniformly) convex.

Our main result is as follows.
\begin{theorem}
\label{interior}
Suppose that $\Omega$ is a bounded domain in $\mathbb{R}^{n}$, and $f > 0$ satisfies \eqref{condition}.
Let $u \in C^4 (\Omega) \cap C^1 (\ol \Omega)$ be an admissible solution of \eqref{1-1}.
Assume that $\ol u \in C^{1,1} (\ol \Omega)$ is $(k+1)$-convex if $k<n$ or convex if $k=n$ satisfying
\[
\ol u > u \mbox{ in } \Omega \mbox{ and } \ol u = u \mbox{ on } \partial \Omega.
\]
Then the second fundamental form $h$
of the graphic hypersurface $M_u$ satisfies the estimates
\begin{equation}\label{curvature}
  |h|\leq\frac{C}{(\ol u-u)^{\alpha}},
\end{equation}
where $\alpha=\sup\{3, k-1\}$ and the constant $C$ depends only on $n$, $k$, $\|u\|_{C^1 (\ol \Omega)}$, $\|\ol u\|_{C^{1,1} (\ol \Omega)}$ and $\|f^{1/(k-1)}\|_{C^{1,1} (\overline{\Omega} \times [\inf_{\overline{\Omega}} u, \sup_{\overline{\Omega}} u])}$
, but is independent of the lower bound of $f$.
\end{theorem}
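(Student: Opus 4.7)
The plan is to run a Pogorelov-type maximum principle argument on a test function built from the barrier gap $\bar u - u$, the largest principal curvature of $M_u$, and a gradient cutoff. Specifically, I would consider
\begin{equation*}
W(x,\xi) = \alpha \log(\bar u - u) + \log h_{\xi\xi} + \phi(|Du|^2),
\end{equation*}
where $\xi$ is a unit tangent vector to $M_u$, $h_{\xi\xi}$ is the second fundamental form in direction $\xi$, and $\phi$ is an auxiliary function of $|Du|^2$ chosen to generate a favorable term after one differentiation (for instance $\phi(t) = -\tfrac{A}{2}\log(1+t)$ with $A$ large depending on $\|u\|_{C^1}$). Since $\bar u = u$ on $\partial \Omega$, the log barrier forces the supremum of $W$ over $\bar \Omega \times \{|\xi|=1\}$ to be attained at some interior point $x_0$; after rotating the tangent frame I may assume $\xi_0 = e_n$ and that $(h_{ij}(x_0))$ is diagonal with $h_{nn}(x_0)$ its largest eigenvalue. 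Proving $h_{nn}(x_0)(\bar u - u)(x_0)^\alpha \le C$ will then give \eqref{curvature}.

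At $x_0$ I would combine the critical-point identity $\nabla W = 0$ with the ellipticity inequality $F^{ij}\nabla_i\nabla_j W \leq 0$, where $F^{ij} = \partial \sigma_k/\partial h_{ij}$ evaluated at $\kappa[M_u](x_0)$, and the twice-differentiated curvature equation. The Codazzi identities and Simons-type commutation rules for the graph of $u$ yield an expression of the schematic form
\begin{equation*}
F^{ij}\nabla_i\nabla_j h_{nn} \;\ge\; -F^{ij,rs} h_{ij,n} h_{rs,n} + h_{nn}\sum_i F^{ii}\kappa_i^2 - Ch_{nn}\sum F^{ii} + \nabla_n\nabla_n f,
\end{equation*}
in which the concavity term is nonpositive and, since $h_{nn}$ is the largest curvature, Newton--MacLaurin forces $h_{nn}\sum F^{ii}\kappa_i^2$ to control a good quantity comparable to $h_{nn}^2 F^{nn}$ (or, after absorbing the $\phi$-term, comparable to $h_{nn}^2 \sum F^{ii}$). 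The contribution $\alpha F^{ij}\nabla_i\nabla_j \log(\bar u - u)$ is handled via the $(k+1)$-convexity of $\bar u$ (or simple convexity when $k=n$): applying Newton--MacLaurin to $D^2 \bar u$ gives a one-sided bound on $F^{ij}\bar u_{ij}$, while $F^{ij} u_{ij}$ is controlled through the equation itself, producing $F^{ij}(\bar u - u)_{ij} \geq -C(1+\sum F^{ii})$ so that only the Hessian-free term $-\alpha(\bar u-u)^{-2}F^{ij}(\bar u-u)_i(\bar u-u)_j$ remains to be absorbed via $\nabla W = 0$.

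The main obstacle, and the origin of the exponent $\alpha = \max\{3,k-1\}$, is controlling $F^{ij}f_{ij}$ without dividing by $f$, since $f$ may vanish. Here hypothesis \eqref{condition} is essential: writing $g = f^{1/(k-1)} \in C^{1,1}(\bar \Omega \times \mathbb{R})$ so that $f = g^{k-1}$ and $\sigma_k(\kappa) = g^{k-1}$, one has wherever $g > 0$ the pointwise bounds
\begin{equation*}
|\nabla f| \le C g^{k-2}, \qquad |\nabla^2 f| \le C g^{k-3}\bigl(|\nabla g|^2 + g\,|\nabla^2 g|\bigr),
\end{equation*}
and the Newton--MacLaurin inequality $\sum F^{ii} \ge c\, \sigma_k^{(k-1)/k} = c\, g^{(k-1)^2/k}$. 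Combining these with a Cauchy--Schwarz splitting with weight $\epsilon h_{nn}$ and multiplying through by the appropriate power of $(\bar u - u)$ converts the offending term into
\begin{equation*}
|F^{ij}f_{ij}| \;\le\; \epsilon\, h_{nn}^2 \sum F^{ii} + C_\epsilon (\bar u - u)^{-(k-1)}\bigl(1 + \sum F^{ii}\bigr);
\end{equation*}
it is the exponent $k-1$ on the right-hand side that forces $\alpha \ge k-1$. The residual bad contributions from differentiating $\log(\bar u - u)$ twice and from its interaction with $\log h_{nn}$ through $\nabla W = 0$ contribute terms of sizes $(\bar u - u)^{-2\alpha}$ and $h_{nn}(\bar u - u)^{-\alpha - 1}$; absorbing them into the good $h_{nn}^2\sum F^{ii}$ via the standard Pogorelov power counting forces $\alpha \ge 3$, and taking $\alpha = \max\{3,k-1\}$ accommodates both.

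Assembling these ingredients at $x_0$, dropping the concavity term, and using $\nabla W = 0$ to eliminate first-derivative cross terms, the critical inequality $F^{ij}\nabla_i\nabla_j W(x_0)\le 0$ reduces to
\begin{equation*}
c\, h_{nn}(x_0)^2 (\bar u - u)(x_0)^{2\alpha} \sum F^{ii} \;\le\; C\sum F^{ii} + C,
\end{equation*}
from which $h_{nn}(x_0)(\bar u - u)(x_0)^{\alpha} \le C$ follows. Since $x_0$ is the global maximum point of $W$, this bound propagates to all of $\Omega$ and yields \eqref{curvature}. As no step required dividing by $f$, the constant is independent of $\inf_{\Omega} f$, consistent with the statement.
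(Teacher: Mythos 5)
Your overall framework (a Pogorelov test function built from $\log(\bar u - u)$, $\log h_{\xi\xi}$, and a gradient-type auxiliary term, together with Lemma \ref{lem-4} for the $(k+1)$-convexity of $\bar u$) is in the right spirit and is close to what the paper does, with the paper's choice being $\rho^\alpha (v-a)^{-1}\exp\{\tfrac{b}{2}|X|^2\}h_{\xi\xi}$ where $v = 1/w$. However, there is a serious gap in your treatment of the degenerate right-hand side, and a secondary one in your power-counting near the maximum.

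The key point you miss is that the term requiring $1/f$ does \emph{not} disappear simply by writing $f = g^{k-1}$ and differentiating. The quantity you must control after differentiating the equation twice is $\nabla_{11}f - \sigma_k^{ij,pq}\nabla_1 h_{ij}\nabla_1 h_{pq}$, and the concavity estimate for the second term intrinsically produces $-C\,(\nabla_1 f)^2/f$ (see \eqref{interior-6} and \eqref{m72}): indeed $\sum_{i\neq j}\sigma_{k-2;ij}\nabla_1 h_{ii}\nabla_1 h_{jj} \leq (1-\tfrac1k)\tfrac{1}{f}(\nabla_1 f)^2$ has coefficient $1-\tfrac1k$, which exceeds the coefficient $1-\tfrac1{k-1}$ of the matching positive term hidden in $\nabla_{11}f$, so no exact cancellation occurs. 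Your alternative pointwise bound $|\nabla^2 f| \le C g^{k-3}(|\nabla g|^2 + g|\nabla^2 g|) \le C g^{k-3}$ does not save you: for $k=2$ it diverges as $g\to 0$, for $k=3$ it is merely bounded, and in no case does it produce the claimed $(\bar u-u)^{-(k-1)}(1+\sum F^{ii})$ without some mechanism to trade a negative power of $g$ for a negative power of the barrier. The paper's essential ingredient here is Lemma \ref{Blocki} (B{\l}ocki), applied to $\psi = f^{1/(k-1)}$, which gives $(\nabla_1 f)^2/f \le C d^{-2} f^{1-1/(k-1)}$ with $d = \mathrm{dist}(x,\partial\Omega)$; combined with $\bar u - u \le B d$ and the harmless reduction to $h_{11}d^2 \ge 1$, this tames the singularity. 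Your claim that ``no step required dividing by $f$'' is therefore false, and without B{\l}ocki's lemma the argument breaks precisely in the degenerate region where $f$ (hence $g$) is small.

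A second, smaller gap: you assert that ``since $h_{nn}$ is the largest curvature, Newton--MacLaurin forces $h_{nn}\sum F^{ii}\kappa_i^2$ to control a good quantity comparable to $h_{nn}^2 F^{nn}$.'' But $F^{nn} = \sigma_{k-1;n}(\kappa)$, computed with the largest curvature omitted, is the \emph{smallest} of the $F^{ii}$ and can be tiny; $h_{nn}^2 F^{nn}$ is not a useful good term by itself. The paper avoids this by splitting into two cases as in Chou--Wang \cite{CW01} (Lemma \ref{lem-3}): Case 1, $h_{kk}\ge \varepsilon h_{11}$, where one obtains $\sum F^{ii}\kappa_i^2 \ge \theta h_{11}^2 \sum F^{ii}$; and Case 2, $h_{kk} < \varepsilon h_{11}$, where one uses the structural inequality $h_{11}\sigma_{k-1;1} \ge (1-\bar\delta)\sigma_k$ and the third-derivative term $\tfrac{2}{h_{11}}\sum_{i\ge 2}\sigma_{k-2;i1}(\nabla_i h_{11})^2$ to absorb the $\sigma_k^{ii}(\nabla_i h_{11})^2/h_{11}^2$ term. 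Your sketch does not contain this dichotomy, and without it the ``$h_{nn}^2\sum F^{ii}$'' good term you invoke at the end is not available. Also note the paper uses the Newton--MacLaurin inequality \eqref{GC2-9} with $l=k-1$, $r=k$, $s=1$, giving the stronger lower bound $\sum F^{ii} \ge c_0 f^{1-1/(k-1)}h_{11}^{1/(k-1)}$ (with the extra $h_{11}^{1/(k-1)}$ factor); the weaker $\sum F^{ii} \ge c\,\sigma_k^{(k-1)/k}$ you quote is not enough to close the estimate.
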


If the principal curvatures in \eqref{1-1} are replaced by eigenvalues of the Hessian, $D^2 u$, equation \eqref{1-1} becomes the
$k$-Hessian equation
\begin{equation}
\label{1-1h}
\left\{ \begin{aligned}
   \sigma_k (\lambda (D^2 u)) & = f (x, u) \;\;\mbox{ in }~ \Omega, \\
                 u &= \varphi  \;\;\mbox{ on }~ \partial \Omega,
\end{aligned} \right.
\end{equation}
where $\lambda(D^{2}u)=(\lambda_{1}, \cdots, \lambda_{n})$ denote the eigenvalues of $D^{2}u$.
When $k=n$, \eqref{1-1h} corresponds to the Monge-Amp\`{e}re equation.

Pogorelov's interior second order estimate was originally introduced by Pogorelov \cite{Pogorelov} for
non-degenerate Monge-Amp\`{e}re equation.
His results were extended to $k$-Hessian equations by Chou-Wang \cite{CW01}
and to curvature equations by Sheng-Urbas-Wang \cite{SUW12}.
Liu-Trudinger \cite{LiuTrudinger10} and Jiang-Trudinger \cite{JiangTrudinger14} established the Pogorelov estimates for Monge-Amp\`{e}re type
equations arising from optimal transportation and geometrical optics.
Li-Ren-Wang \cite{LRW16} proved the Pogorelov estimates for 2-convex solutions of 2-Hessian equations,
with the function on the right-hand side also depending on the gradient of the solution.
When $k>2$, similar estimates
were also established for $(k+1)$-convex solutions of general $k$-Hessian
equations there.
Chu-Jiao \cite{CJ21} considered the Pogorelov estimates for a class of Hessian-type equations, which include
the $(n-1)$ Monge-Amp\`{e}re equation.
Their results were further generalized by Chen-Tu-Xiang \cite{CTX21} later on.
Dinew \cite{Dinew20}
considered another generalization of the $(n-1)$ Monge-Amp\`{e}re equation.
Another interesting equations are called sum Hessian equations
which are combinations of Hessian operators $\sigma_k$.
Recently, Liu-Ren \cite{LR22} studied the Pogorelov estimates for such equations.

All the aforementioned works dealt with the non-degenerate cases.
Blocki \cite{Blocki03b} studied the interior regularity for the degenerate Monge-Amp\`{e}re equation and established the Pogorelov estimates.
Guan-Trudinger-Wang \cite{GTW99} proved the $C^{1,1}$ regularity up to the boundary
for degenerate Monge-Amp\`{e}re equations, as well as prescribed
Gauss curvature equations provided that
\begin{equation}
\label{GTW}
f^{1/(n-1)} \in C^{1,1} (\ol \Omega)
\end{equation}
holds.
It is natural to ask whether one can extend the results of \cite{GTW99} to the $k$-Hessian equations, aiming to establish the $C^{1,1}$ estimates both in the interior and on the boundary, under the condition
\begin{equation}
\label{ITW}
f^{1/(k-1)} \in C^{1,1} (\ol \Omega).
\end{equation}
This question was proposed by Ivochkina-Trudinger-Wang \cite{ITW2004} and remains open until now.
An example presented by Wang \cite{Wang95} indicates that the condition \eqref{ITW} should be
optimal for the existence of solutions in $C^{1,1} (\ol \Omega)$.
For degenerate Hessian equations, Krylov established the
the $C^{1, 1}$ estimates in \cite{K94a, K94b, K95a, K95b} under the assumption
$f^{1/k} \in C^{1,1} (\ol \Omega)$ and his proof was further simplified
by Ivochkina-Trudinger-Wang \cite{ITW2004}.
Write $\widetilde{f} := f^{1/(k-1)}$. Dong \cite{Dong06} considered \eqref{1-1h} with $\varphi\equiv 0$.
He derived the $C^{1,1}$ estimates under a slightly stronger condition
\begin{equation}
\label{strong}
|D \widetilde{f}| \leq C \widetilde{f}^{1/2} \mbox{ on } \ol \Omega
\end{equation}
for some positive constant $C$.

The Pogorelov estimates Theorem \ref{interior} can be applied to the Dirichlet problem with affine boundary values.
\begin{theorem}
\label{cor2}
Suppose $\Omega$ is a bounded domain in $\mathbb{R}^n$ and $f$ satisfies $f \geq 0$, $f_u \geq 0$,
and \eqref{condition}.
Assume that $\varphi$ is affine, and there exists a sub-solution $\underline{u} \in C^{1,1} (\ol \Omega)$ satisfying $\kappa [M_{\ul u}] \in \Gamma_k$ and
\begin{equation}
\label{subsol}
\left\{ \begin{aligned}
   \sigma_k(\kappa[M_{\underline{u}}]) & \geq f(x,\underline{u}) & \;\;\mbox{ in }~ &\Omega,\\ %\mbox{ in the viscosity sense}, \\
                 \ul u &= \varphi  & \;\;\mbox{ on }~ & \partial \Omega.
\end{aligned} \right.
\end{equation}
Then, there exists a unique admissible solution
$u\in C^{1,1}(\Omega) \cap C^{0}(\ol \Omega)$ of the Dirichlet problem \eqref{1-1}.
\end{theorem}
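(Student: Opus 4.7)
My plan is to approximate \eqref{1-1} by a family of non-degenerate prescribed $k$-curvature problems, solve each by a subsolution-based existence theorem, and pass to the limit using the interior Pogorelov bound of Theorem \ref{interior}.

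For small $\varepsilon > 0$, define
\[
f_\varepsilon(x,u) := \bigl(f^{1/(k-1)}(x,u) + \varepsilon\bigr)^{k-1}.
\]
Then $f_\varepsilon \geq \varepsilon^{k-1} > 0$, $f_\varepsilon \searrow f$ as $\varepsilon \to 0^+$, $(f_\varepsilon)_u \geq 0$ whenever $f_u \geq 0$, and $\|f_\varepsilon^{1/(k-1)}\|_{C^{1,1}(\ol\Omega \times I)}$ is bounded independently of $\varepsilon$. The strict admissibility $\kappa[M_{\ul u}] \in \Gamma_k$ allows one to produce a strict admissible subsolution $\ul u_\varepsilon$ for the $\varepsilon$-problem with $\ul u_\varepsilon \leq \varphi$ on $\partial\Omega$ (either by a small downward perturbation of the form $\ul u - \varepsilon^\beta \eta$ with $\eta \in C_c^\infty(\Omega)$ non-negative, or by exploiting the near-equality of $f_\varepsilon$ and $f$ on $\ul u$). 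Classical existence for non-degenerate curvature equations via the subsolution method (see, e.g., \cite{GS04, JW21}) then produces an admissible $u_\varepsilon \in C^{3,\gamma}(\ol\Omega)$ solving
\[
\sigma_k(\kappa[M_{u_\varepsilon}]) = f_\varepsilon(\cdot, u_\varepsilon) \; \text{ in } \Omega, \qquad u_\varepsilon = \varphi \; \text{ on } \partial\Omega.
\]

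Since $\varphi$ is affine, $\sigma_k(\kappa[M_\varphi]) = 0 < f_\varepsilon$, so by comparison $\ul u \leq u_\varepsilon < \varphi$ in $\Omega$, giving a uniform $L^\infty$ bound. The barriers $\ul u$ and $\varphi$ agree on $\partial\Omega$ and supply a uniform boundary gradient bound, which combined with the standard maximum principle applied to $|Du_\varepsilon|^2$ yields a uniform $C^1(\ol\Omega)$ bound. Choosing $\ol u := \varphi$ in Theorem \ref{interior} — its hypotheses hold since $D^2\varphi \equiv 0 \in \ol{\Gamma}_{k+1}$ for $k<n$, $\varphi$ is convex for $k=n$, and $\varphi > u_\varepsilon$ in $\Omega$ — one obtains
\[
|h_{u_\varepsilon}(x)| \leq \frac{C}{(\varphi(x) - u_\varepsilon(x))^\alpha}, \qquad x \in \Omega,
\]
with $C$ independent of $\varepsilon$.

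A subsequence $u_\varepsilon$ converges in $C^{1,\gamma}_{\mathrm{loc}}(\Omega) \cap C(\ol\Omega)$ to a function $u$ with $\ul u \leq u \leq \varphi$ and $u = \varphi$ on $\partial\Omega$; boundary continuity is immediate from this squeeze. On every compact $K \subset \{u < \varphi\}$ the Pogorelov bound is uniform in $\varepsilon$, so $u \in C^{1,1}(K)$ and solves \eqref{1-1} classically there. At interior points where $u = \varphi$, a strong comparison argument (using $f_u \geq 0$) forces $u \equiv \varphi$ in a neighborhood, so $u$ inherits the smoothness of the affine $\varphi$. Uniqueness in the admissible class then follows from the comparison principle. \textbf{The main technical obstacle} is the treatment of the potential contact set $\{u = \varphi\}$: the Pogorelov bound degenerates as $\varphi - u_\varepsilon \to 0$, so I must verify that interior contact points necessarily lie in $\{f(\cdot, u) = 0\}$ and that $u$ coincides locally with $\varphi$ there — this is where the monotonicity hypothesis $f_u \geq 0$ and the structure of $\sigma_k$ on $\partial\Gamma_k$ will be essential.
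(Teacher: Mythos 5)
Your overall scheme — approximate by non-degenerate curvature problems, apply Theorem~\ref{interior} with $\ol u = \varphi$, and pass to the limit — is the right skeleton and matches the paper's. But there are two substantive gaps, the second of which you flagged yourself as unresolved.

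First, your approximation $f_\varepsilon = (f^{1/(k-1)}+\varepsilon)^{k-1}$ is everywhere strictly larger than $f$, so $\ul u$ is generally \emph{not} a subsolution of the $\varepsilon$-problem wherever $\sigma_k(\kappa[M_{\ul u}]) = f(x,\ul u)$ (the hypothesis \eqref{subsol} is only $\geq$). Your suggested fix of perturbing $\ul u$ downward by $\varepsilon^\beta\eta$ is not justified: subtracting a bump does not obviously preserve admissibility ($\kappa\in\Gamma_k$) nor deliver the needed strict inequality quantitatively in $\varepsilon$. The paper resolves this cleanly with a cutoff. Since $\sigma_k(\kappa[M_{\ul u}])\geq\theta_0^{k-1}>0$, it sets $f_\epsilon=[\tilde f+\epsilon\,\eta(\tilde f)]^{k-1}$ with $\eta\equiv 1$ on $[0,\theta_0/4]$ and $\eta\equiv 0$ on $[\theta_0/2,\infty)$; then $f_\epsilon(x,\ul u)=f(x,\ul u)$ where $\tilde f(x,\ul u)\geq\theta_0/2$, and where $\tilde f(x,\ul u)<\theta_0/2$ one has $f_\epsilon(x,\ul u)<\theta_0^{k-1}\leq\sigma_k(\kappa[M_{\ul u}])$ for small $\epsilon$. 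So $\ul u$ stays a subsolution with no perturbation.

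Second, and more importantly, the contact set issue you label the ``main technical obstacle'' is handled by the paper in a way you did not anticipate, and your proposed route (show $u\equiv\varphi$ locally near interior contact points, and inherit regularity from $\varphi$) is both incomplete and would still leave open the $C^{1,1}$ matching across $\partial\{u=\varphi\}$. The paper instead shows there is \emph{no interior contact at all}: after discarding the trivial case $f\equiv 0$, pick $x_0$ with $f>0$ nearby; use the generalized Newton--MacLaurin inequality to get $\sigma_1(\kappa[M_{u_\epsilon}])\geq c_0 f_\epsilon^{1/k}$, establish interior gradient bounds near $x_0$, and construct a smooth bump $\ol f\geq 0$ supported near $x_0$ with $\ol f\leq c_0(\delta_0/2)^{1/k}$. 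Solving the prescribed mean curvature equation $\sigma_1(\kappa[M_v])=\epsilon_2\ol f$ with $v=\varphi$ on $\partial\Omega$ gives (since $\sigma_1(\kappa[M_\varphi])=0$ for affine $\varphi$) a barrier with $v<\varphi$ in $\Omega$ by the strong maximum principle, and one shows $u_\epsilon<v$ uniformly in $\epsilon$. Hence on every $\Omega'\subset\subset\Omega$ there is a constant $c_{\Omega'}>0$ with $\varphi-u_\epsilon\geq c_{\Omega'}$ independently of $\epsilon$, so the Pogorelov bound is uniform on compacts and no contact-set analysis is needed. This barrier construction is the key idea missing from your argument.

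A third, minor point: the compactness you invoke to pass to $C(\ol\Omega)$ needs a boundary modulus-of-continuity argument; the paper obtains it from $\ul u\leq u_\epsilon\leq\varphi$ and $\ul u=\varphi$ on $\partial\Omega$, exactly as you intend, so that part is fine.
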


Our approach for proving Theorem \ref{interior} is inspired partially by \cite{CW01} and \cite{SUW12}.
Compared to the non-degenerate cases explored in \cite{CW01} and \cite{SUW12},
the main difficulty in the current work is from the degeneracy of the equations and the condition \eqref{condition},
instead of that $f^{1/k} \in C^{1,1} (\ol \Omega \times \mathbb{R})$.
To overcome these difficulties, we make use of a lemma proved by B{\l}ocki 
\cite{Blocki03} (Lemma \ref{Blocki}) and the generalized Newton-MacLaurin inequality \eqref{nm}
for elementary symmetric functions.

To clarify our approach, we first establish the Pogorelov estimates for degenerate $k$-Hessian equations.
In contrast to prescribed curvature equations,
it is possible to construct a sub-solution based on the $(k-1)$-convexity of the domain, as demonstrated in \cite{CNS}.
Our application of the Pogorelov estimates to degenerate $k$-Hessian equations \eqref{1-1h} leads to the following existence theorem
when  $\varphi$ is affine.
\begin{theorem}
\label{cor1}
Suppose $\Omega$ is uniformly $(k-1)$-convex with $\partial \Omega \in C^{1, 1}$.
Assume that $\varphi$ is affine,
$f \geq 0$, $f_{u}\geq 0$, and $f^{1/(k-1)} \in C^{1,1} (\Omega \times \mathbb{R}) \cap C^0 (\ol \Omega \times \mathbb{R})$.
Then, there exists a unique $k$-admissible solution $u\in C^{1,1}(\Omega)\cap C^{0}(\ol \Omega)$ to the Dirichlet problem \eqref{1-1h}.
\end{theorem}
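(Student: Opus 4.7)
The plan is a continuity/approximation argument: solve non-degenerate perturbations of \eqref{1-1h} by Caffarelli--Nirenberg--Spruck theory, then take a limit using the interior Pogorelov estimate for degenerate $k$-Hessian equations (the $k$-Hessian analogue of Theorem \ref{interior}, established earlier in this paper).

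\emph{Step 1 (non-degenerate approximation).} For $\varepsilon \in (0,1]$ set $\widetilde{f}_\varepsilon(x,u) := (f^{1/(k-1)}(x,u)+\varepsilon)^{k-1}$. Then $\widetilde{f}_\varepsilon \geq \varepsilon^{k-1} > 0$, $(\widetilde{f}_\varepsilon)_u \geq 0$, the family is uniformly bounded on $\ol\Omega \times [\inf\varphi, \sup\varphi]$, and $\widetilde{f}_\varepsilon^{1/(k-1)} = f^{1/(k-1)}+\varepsilon$ has $C^{1,1}$-norm uniformly bounded on compact subsets of $\Omega \times \mathbb{R}$. Consider the non-degenerate problem obtained by replacing $f$ with $\widetilde{f}_\varepsilon$ in \eqref{1-1h}.

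\emph{Step 2 (solving the perturbed problem).} Since $\Omega$ is uniformly $(k-1)$-convex with $\p\Omega \in C^{1,1}$ and $\varphi$ is affine, the standard barrier construction of Caffarelli--Nirenberg--Spruck yields a $k$-admissible strict sub-solution $\ul{u} \in C^{\infty}(\ol{\Omega})$ with $\ul{u} = \varphi$ on $\p\Omega$ and $\sigma_k(\lambda(D^2\ul{u})) \geq \sup_\varepsilon \widetilde{f}_\varepsilon$ in $\Omega$; in particular $\ul{u}$ is a sub-solution uniformly in $\varepsilon$. The affine function $\varphi$ is a super-solution since $\sigma_k(\lambda(D^2\varphi)) = 0 \leq \widetilde{f}_\varepsilon$. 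The classical CNS existence theorem for non-degenerate $k$-Hessian equations then provides a unique $k$-admissible solution $u^\varepsilon \in C^\infty(\Omega) \cap C^{1,1}(\ol{\Omega})$ with $u^\varepsilon = \varphi$ on $\p\Omega$.

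\emph{Step 3 (uniform estimates).} The comparison principle (valid on admissible functions, using $f_u \geq 0$) gives $\ul{u} \leq u^\varepsilon \leq \varphi$ in $\Omega$, hence uniform $L^\infty$ and, via $\ul{u}$ and $\varphi$ as boundary barriers, uniform $C^1$ bounds. Moreover, $\ol{u} := \varphi$ satisfies the hypotheses of the $k$-Hessian Pogorelov estimate: $\varphi$ is affine, so $D^2\varphi \equiv 0 \in \ol{\Gamma}_{k+1}$, and the strong maximum principle gives $\varphi > u^\varepsilon$ in $\Omega$ (otherwise equality at an interior point would force $\sigma_k(\lambda(D^2\varphi)) = \widetilde{f}_\varepsilon > 0$, a contradiction). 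The Pogorelov estimate yields
\begin{equation*}
|D^2 u^\varepsilon(x)| \leq \frac{C}{(\varphi(x) - u^\varepsilon(x))^\alpha}, \quad x \in \Omega,
\end{equation*}
with $C$ independent of $\varepsilon$.

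\emph{Step 4 (passage to the limit and uniqueness).} From the uniform $C^1(\ol\Omega)$ and interior $C^{1,1}(K)$ bounds on every compact $K \subset\subset \Omega$, Arzel\`a--Ascoli and a diagonal argument extract a subsequence converging locally in $C^{1,\beta}(\Omega)$ and weakly-$\star$ in $W^{2,\infty}_{\mathrm{loc}}(\Omega)$ to a function $u \in C^{1,1}_{\mathrm{loc}}(\Omega) \cap C^0(\ol{\Omega})$ with $u = \varphi$ on $\p\Omega$. The limit is $k$-admissible and solves \eqref{1-1h} almost everywhere (and in the viscosity sense). Uniqueness among $k$-admissible solutions follows from the admissible comparison principle, using $f_u \geq 0$; this also upgrades subsequential convergence to full convergence.

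The crux of the argument is that the Pogorelov constant in Step 3 is truly independent of $\varepsilon$. This hinges on the fact, explicitly flagged in Theorem \ref{interior}, that the estimate does not depend on the lower bound of $f$, together with the uniform $C^{1,1}$-control on $\widetilde{f}_\varepsilon^{1/(k-1)} = f^{1/(k-1)}+\varepsilon$ on compact subsets of $\Omega\times\mathbb{R}$. The remaining ingredients---sub-solution construction in uniformly $(k-1)$-convex domains, non-degenerate CNS existence, and the admissible comparison principle---are standard and not expected to cause difficulty.
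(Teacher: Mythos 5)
Your overall strategy (non-degenerate approximation, Pogorelov for the interior second-derivative bound, limit) is the same as the paper's, but you omit the one ingredient that makes the hypothesis $f^{1/(k-1)} \in C^{1,1}(\Omega\times\mathbb{R}) \cap C^0(\ol\Omega\times\mathbb{R})$ usable: exhaustion of $\Omega$ by compactly contained subdomains. As stated, your argument has a genuine gap at the two places you invoke boundary-regular data.

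First, in Step 2 you apply the classical non-degenerate existence theory on $\Omega$ itself with right-hand side $\widetilde f_\varepsilon$. But $\widetilde f_\varepsilon^{1/(k-1)} = f^{1/(k-1)}+\varepsilon$ inherits only $C^{1,1}(\Omega) \cap C^0(\ol\Omega)$ regularity, which is not enough to run the Caffarelli--Nirenberg--Spruck boundary estimates on $\ol\Omega$ and produce $u^\varepsilon \in C^{1,1}(\ol\Omega)$. Second, and more seriously, in Step 3 you apply Theorem~\ref{interior_h} on $\Omega$; its constant depends explicitly on $\|f^{1/(k-1)}\|_{C^{1,1}(\ol\Omega\times[\inf u,\sup u])}$ (this enters through B\l{}ocki's Lemma~\ref{Blocki}, whose constant involves $\sup_{D}\lambda_{\max}(D^2\psi)$ over the whole domain $D$), and that quantity can be infinite under the stated hypotheses on $f$. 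Your concluding paragraph acknowledges a dependence on $C^{1,1}$ data but asserts that control ``on compact subsets of $\Omega\times\mathbb{R}$'' suffices; it does not, because the Pogorelov constant is tied to the closure of the domain on which the test function is maximised.

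The paper resolves this by taking a sequence of uniformly $(k-1)$-convex domains $\Omega_j \subset\subset \Omega$ with $\Omega_j \uparrow \Omega$, solving $\sigma_k(\lambda(D^2 u_j)) = f + 1/j$ in $\Omega_j$ with $u_j = \varphi$ on $\partial\Omega_j$. On each $\ol\Omega_j \subset \Omega$ the data is genuinely $C^{1,1}$ up to the (shrunken) boundary, so both the non-degenerate existence theory and the Pogorelov estimate of Theorem~\ref{interior_h} with $\ol u = \varphi$ apply verbatim, and the interior gradient estimate of Chou--Wang gives $C^1$-bounds on compact subsets; the solution of \eqref{1-1h} is then obtained by letting $j\to\infty$. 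To repair your write-up you would need to introduce the same two-parameter approximation (domain exhaustion plus regularisation of $f$) or, at minimum, justify how a Pogorelov bound applied on $\Omega$ can be made independent of the boundary behaviour of $D^2(f^{1/(k-1)})$ --- which the current argument does not do.
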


For general $\varphi$, analogous to Theorem 1.2 in \cite{Blocki03b} where the Monge-Amp\`{e}re equation was studied, we can establish:
\begin{theorem}
\label{hessian}
Suppose $\Omega$ is uniformly $(k-1)$-convex with $C^{3,1}$ boundary. Assume that $f\geq 0$, $f^{1/(k-1)} \in C^{1,1} (\ol \Omega)$,
$\varphi \in C^{3,1} (\ol \Omega)$, and
\begin{equation}
\label{right}
\mbox{ every connected component of the set } \{f=0\} \cap \Omega \mbox{ is compact.}
\end{equation}
Then there exists a unique solution $u \in C^{1,1} (\Omega) \cap C^{0,1} (\ol \Omega)$ of \eqref{1-1h}.
\end{theorem}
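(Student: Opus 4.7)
The plan is to follow the approximation strategy used by B{\l}ocki for the degenerate Monge-Amp\`ere case (Theorem~1.2 in \cite{Blocki03b}), replacing $f$ by $f_\varepsilon := f + \varepsilon > 0$ so that the problem becomes non-degenerate, solving the resulting smooth Dirichlet problem, and passing to the limit as $\varepsilon \to 0^+$. The uniformly $(k-1)$-convex $C^{3,1}$ boundary, $\varphi \in C^{3,1}(\ol \Omega)$, and the Caffarelli-Nirenberg-Spruck theory \cite{CNS} for non-degenerate $k$-Hessian equations together yield a unique smooth admissible solution $u_\varepsilon$ of $\sigma_k(\lambda(D^2 u_\varepsilon)) = f_\varepsilon$ in $\Omega$ with $u_\varepsilon = \varphi$ on $\partial\Omega$. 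Standard barrier arguments based on the $(k-1)$-convexity of $\Omega$ and the $C^{1,1}$ regularity of $\varphi$ give a $C^{0,1}(\ol \Omega)$ bound on $u_\varepsilon$ uniform in $\varepsilon$; the remaining task is a uniform interior $C^{1,1}$ bound.

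For the interior second-order estimate, I would split $\Omega$ into a non-degenerate piece and a degenerate piece. On any compact subset of $\Omega \cap \{f > 0\}$ the equation is uniformly elliptic and classical Evans-Krylov plus Schauder theory gives interior $C^{2,\alpha}$ bounds that depend on a positive lower bound for $f$ but are uniform in $\varepsilon$. The delicate regime is a neighborhood of each connected component $K$ of $\{f = 0\} \cap \Omega$, and here the hypothesis that every such component is compact is what makes a localized Pogorelov argument feasible.

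For each compact component $K$, fix an open neighborhood $U$ with $\ol U \subset \Omega$ and $\partial U \subset \{f > 0\}$, and construct on $U$ a $(k+1)$-convex supersolution $\ol u \in C^{1,1}(\ol U)$ satisfying $\ol u > u_\varepsilon$ in $U$ and $\ol u = u_\varepsilon$ on $\partial U$. The Hessian analogue of Theorem~\ref{interior} (the degenerate $k$-Hessian Pogorelov estimate developed earlier in the paper, mentioned in the discussion preceding Theorem~\ref{cor1}) then gives
\[
 |D^2 u_\varepsilon(x)| \leq \frac{C}{(\ol u(x) - u_\varepsilon(x))^{\alpha}}, \qquad x \in U,
\]
with $C$ independent of $\varepsilon$. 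A natural candidate for $\ol u$ is a quadratic barrier $\ol u(x) = u_\varepsilon(x_0) + p\cdot(x-x_0) + A|x-x_0|^2$ with $A$ sufficiently large, matched along $\partial U$ using the $C^{1,1}$ control of $u_\varepsilon$ coming from the non-degenerate piece; since $\varphi$ is not assumed affine this replaces the global supersolution used in Theorem~\ref{cor1}. I expect this barrier construction to be the technically delicate step: both $U$ and $A$ must be chosen uniformly in $\varepsilon$, and this is exactly where compactness of each component of $\{f=0\}$ is indispensable, since a non-compact component would force the barrier to match the general boundary datum $\varphi$, which is not controlled enough to keep $\ol u$ strictly above $u_\varepsilon$ in the interior.

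Once a uniform interior $C^{1,1}$ bound on $u_\varepsilon$ is available on every compact subset of $\Omega$, the Arzel\`a-Ascoli theorem together with weak-$*$ compactness in $W^{2,\infty}_{\mathrm{loc}}(\Omega)$ extracts a subsequential limit $u \in C^{1,1}(\Omega) \cap C^{0,1}(\ol \Omega)$; the $C^{1,\alpha}_{\mathrm{loc}}$ convergence of $u_\varepsilon$ and the weak-$*$ convergence of $D^2 u_\varepsilon$ imply that $u$ solves \eqref{1-1h} almost everywhere and attains the boundary data continuously. Uniqueness follows from the standard comparison principle for admissible solutions of the $k$-Hessian equation.
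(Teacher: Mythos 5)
Your overall framework---approximate by $f+\varepsilon$, establish $\varepsilon$-uniform interior Hessian bounds, pass to the limit---is the right one, but the proposed mechanism for the interior bound has several genuine gaps, and the paper proceeds quite differently.

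First, the localized quadratic barrier cannot serve as the supersolution in Theorem~\ref{interior_h}. That theorem requires a $k$-convex $\ol u \geq u_\varepsilon$ with $\ol u = u_\varepsilon$ \emph{exactly} on the boundary of the domain where the estimate is applied. Your $\ol u(x)=u_\varepsilon(x_0)+p\cdot(x-x_0)+A|x-x_0|^2$ satisfies $\ol u(x_0)=u_\varepsilon(x_0)$, so the Pogorelov bound $(\ol u-u_\varepsilon)^\alpha|D^2u_\varepsilon|\le C$ is vacuous precisely at the degenerate point you need to control; and a quadratic cannot be made to coincide with a generic $C^{1,1}$ function $u_\varepsilon$ on an entire hypersurface $\partial U$, so the boundary matching hypothesis fails as well.

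Second, the claim that Evans--Krylov plus Schauder gives $\varepsilon$-uniform $C^{2,\alpha}$ bounds on compact subsets of $\{f>0\}$ is circular: Evans--Krylov takes a $C^{1,1}$ (i.e.\ second-order) bound as input and upgrades it to $C^{2,\alpha}$. For $k$-Hessian equations with $1<k\le n$ there is no unconditional interior $C^{1,1}$ estimate (Pogorelov-type counterexamples), so uniform ellipticity on $\{f>0\}$ alone does not produce the missing $\varepsilon$-uniform Hessian bound; some barrier is still needed there too.

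Third, you rightly identify the strict inequality $\ol u>u_\varepsilon$ in the interior as the delicate step, but you do not prove it, and the heuristic you give for why compactness is needed is not the real obstruction.

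What the paper does instead is cleaner and avoids all three difficulties: it takes a \emph{single} global $k$-convex supersolution $\ol u\in C^{1,1}(\ol\Omega)$, namely the solution of the degenerate homogeneous problem $\sigma_k(\lambda(D^2\ol u))=0$ in $\Omega$, $\ol u=\varphi$ on $\partial\Omega$ (which exists by Krylov/Ivochkina--Trudinger--Wang). By the comparison principle $\ol u\ge u_\varepsilon$ for all $\varepsilon$, with equality on $\partial\Omega$, so Theorem~\ref{interior_h} applies directly and yields Lemma~\ref{add-9}: $u\in C^{1,1}$ on $\{u<\ol u\}$. The whole content of hypothesis \eqref{right} is then Lemma~\ref{Blocki-prop}, which (following Proposition~4.1 of B{\l}ocki \cite{Blocki03b}) shows $u<\ol u$ on $\{f>0\}\cap\Omega$ and on each compact connected component of $\{f=0\}\cap\Omega$; under \eqref{right} this is all of $\Omega$, and the theorem follows. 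The step you waved at---propagating the strict inequality across the degenerate set---is precisely this lemma, and it is where compactness of each component enters.
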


An interesting problem in hyperbolic geometry so called the asymptotic Plateau type problem
aims to find a complete hypersurface with prescribed curvature and asymptotic boundary at infinity.
When the prescribed curvature is constant, this problem was studied extensively by Guan-Spruck \cite{GS00, GS10}, Guan-Spruck-Szapiel \cite{GSS09}
and Guan-Spruck-Xiao \cite{GSX14}. Sui \cite{Sui19}, Sui-Sun \cite{SuiSun22a, SuiSun22} extended their results to the non-constant case
by introducing a new approximate problem using the level set of a known subsolution. It is worth mentioning that the Pogorelov estimates
play a key role in their proof so that our techniques make it possible to improve Sui-Sun's results. The details are
stated in Section 6.

The rest of the paper is organized as follows.
In Section 2, we present some
preliminaries which may be used in the following sections.
Pogorelov type estimates
for k-Hessian equations and prescribed k-curvature equations are derived in Section
3 and Section 4 respectively.
The Dirichlet problems \eqref{1-1} and \eqref{1-1h} are considered in Section 5.
In Section 6, we are concerned with the asymptotic Plateau type problem in hyperbolic spaces.

%If you want to insert this section on the table of contents, use

%\addcontentsline{toc}{section}{Acknowledgement}

\bigskip

\textbf{Acknowledgement.} The authors wish to thank Zhenan Sui for pointing out Theorem \ref{Theorem6'},
which was not included in an early version of the paper and many valuable discussions.

\section{Preliminaries}

Throughout this paper, $v_i = \frac{\p v}{\p x_i}$, $v_{ij} = \frac{\p^2 v}{\p x_i \p x_j}$,
$Dv=(v_{1}, \cdots, v_{n})$ and $D^2 v = (v_{ij})$ denote the ordinary first and second order derivatives, gradient and Hessian of a function $v \in C^2 (\Omega)$ respectively.
Moreover, we denote $\partial_i f (x, v(x))=\frac{\p f(x,v (x))}{\p x_i}=f_{i} (x, v(x))+f_{v} (x, v(x))v_{i} (x)$ for a
function $f$ defined on $\Omega \times \mathbb{R}$.
%Additionally, We shall use $\tilde{D}$ as the Levi-Civita connection in $\mathbb{R}^{n+1}$.

As in \cite{JW21}, a graphic hypersurface $M_u$ in $\mathbb{R}^{n+1}$ is a codimension one submanifold which can be written as a graph
\[ M_u=\{X=(x, u(x)): x\in\mathbb{R}^n\}.\]
Let
$\epsilon_{n+1} = (0, \cdots, 0, 1) \in \mathbb{R}^{n+1}$, then the height function of $M_u$ is $u(x)=\langle X, \epsilon_{n+1}\rangle$.
The induced metric and second fundamental form of $M_u$ are given
by
$$g_{ij}=\delta_{ij}+ u_i u_j, \ \  1\leq i,j\leq n,$$
and
\[h_{ij}=\frac{u_{ij}}{\sqrt{1+|Du|^2}}\]
respectively, while the upward unit normal vector field to $M_u$ is
\[\nu=\frac{(-Du, 1)}{\sqrt{1+|Du|^2}}.\]
By straightforward calculations, we have the principle curvatures of $M_u$ are eigenvalues of the symmetric matrix
$A=(a_{ij})$:
\begin{equation}
\label{matrix}
a_{ij}=\frac{1}{w}\gamma^{ik}u_{kl}\gamma^{lj},
\end{equation}
where $\gamma^{ik}=\delta_{ik}-\frac{u_iu_k}{w(1+w)}$ and $w=\sqrt{1+|Du|^2}.$ Note that $(\gamma^{ij})$ is invertible with inverse
$\gamma_{ij}=\delta_{ij}+\frac{u_iu_j}{1+w},$ which is the square root of $(g_{ij})$.

%Let $\mathcal{S}$ be the vector of $n\times n$ symmetric matrices and we extend the definition of $\Gamma_k$ into the matrices,
%\[\Gamma_k=\{A\in \mathcal{S}: \lambda(A)\in \Gamma_k\},\]
%where $\lambda(A)=(\lambda_1, \cdots, \lambda_n)$ denotes the eigenvalues of $A.$
%The equation \eqref{1-1} can be written as
%\begin{equation}\label{main equation graph}
%\sigma_k\Big(\frac{1}{w}\gamma^{ik}u_{kl}\gamma^{lj}\Big)=f(x,u).
%\end{equation}

Let $\{e_1,e_2,\cdots,e_n\}$ be a local orthonormal frame on $TM_u$. We will use $\nabla$ to denote
the induced Levi-Civita connection on $M_u$. For a function $v$ on $M_u$, we denote $\nabla_i v=\nabla_{e_i}v,$
$\nabla_{ij} v = \nabla^2 v (e_i, e_j),$ etc in this paper.
Thus, we have
\[|\nabla u|=\sqrt{g^{ij}u_{i}u_{j}}=\frac{|Du|}{\sqrt{1+|Du|^2}}.\]

Using normal coordinates, we also have the following well known fundamental equations for a hypersurface $M$ in $\mathbb{R}^{n+1}:$
\begin{equation}\label{Gauss}
\begin{aligned}
\nabla_{ij} X = \,& h_{ij}\nu  \quad {\rm (Gauss\ formula)}\\
\nabla_i \nu= \,& -h_{ij} e_j \quad {\rm (Weigarten\ formula)}\\
\nabla_k h_{ij} = \,& \nabla_j h_{ik} \quad {\rm (Codazzi\ equation)}\\
R_{ijst} = \,& h_{is}h_{jt}-h_{it}h_{js}\quad {\rm (Gauss\ equation)},
\end{aligned}
\end{equation}
where $X = (X_1, \ldots, X_{n+1})$ is the position vector field on $M$ and $\nu = (\nu_1, \ldots, \nu_{n + 1})$ is the unit normal vector field to $M$ in $\mathbb{R}^{n+1}$,
$h_{ij} = \langle D_{e_i} e_j, \nu\rangle$ is the second fundamental form,
$R_{ijst}$ is the $(4,0)$-Riemannian curvature tensor of $M$, and the derivative here is covariant derivative with respect to the metric on $M$.
Therefore, the Ricci identity becomes,
\begin{equation}\label{Commutation formula}
\nabla_{ij} h_{st} = \nabla_{st} h_{ij}+(h_{mt}h_{sj}-h_{mj}h_{st})h_{mi}+(h_{mt}h_{ij}-h_{mj}h_{it})h_{ms}.
\end{equation}

We need the following lemma which was proved by B{\l}ocki \cite{Blocki03}.
\begin{lemma}
\label{Blocki}
Let $D$ be a domain in $\mathbb{R}^{N}$ and $\psi \in C^{2} (\ol D)$ be nonnegative.
We have
\begin{equation}
\label{Blocki-1}
|D \psi (x)| \leq \max \left\{\frac{|D \psi (x)|}{\mathrm{dist} (x, \partial D)},
   (1 + \sup_D \lambda_{\mathrm{max}} (D^2 \psi))\right\} \sqrt{\psi (x)}
\end{equation}
for all $x \in D$.
\end{lemma}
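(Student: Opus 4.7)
The plan is to exploit the nonnegativity of $\psi$ by tracking it along the ray of steepest descent. I would fix $x \in D$ with $D\psi(x) \neq 0$ (the opposite case being trivial), set $\xi = -D\psi(x)/|D\psi(x)|$ and $d = \mathrm{dist}(x, \partial D)$, and introduce the one-variable function $\phi(t) = \psi(x + t\xi)$ on $[0, d)$. Then
$$\phi(0) = \psi(x), \qquad \phi'(0) = -|D\psi(x)|, \qquad \phi''(t) \leq \Lambda := \sup_D \lambda_{\max}(D^2\psi),$$
so integrating twice yields the quadratic majorant
$$\phi(t) \leq q(t) := \psi(x) - |D\psi(x)|\, t + \tfrac{\Lambda}{2}\, t^2, \qquad t \in [0, d).$$
Since $\phi(t) \geq 0$ on $[0, d)$ by hypothesis, the same lower bound must hold for $q$; the inequality \eqref{Blocki-1} will then follow by evaluating $q(t) \geq 0$ at a judicious choice of $t$.

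The heart of the argument is the selection of $t$, which I would split into two cases matching the two entries inside the $\max$. First, if $\sqrt{\psi(x)} \geq d$, then $|D\psi(x)|\,\sqrt{\psi(x)}/d \geq |D\psi(x)|$ trivially, and the first entry already dominates. Second, if $\sqrt{\psi(x)} < d$, then $t = \sqrt{\psi(x)}$ lies in $[0,d)$, and feeding it into $q(t) \geq 0$ and dividing by $\sqrt{\psi(x)}$ produces
$$|D\psi(x)| \leq \bigl(1 + \tfrac{\Lambda}{2}\bigr)\sqrt{\psi(x)},$$
which is absorbed into the looser constant $(1+\Lambda)\sqrt{\psi(x)}$ of the statement.

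I do not expect any serious conceptual obstacle, since the argument is essentially a one-variable Taylor expansion paired with a case split and makes no contact with the curvature or Hessian structure of the rest of the paper. The main point requiring care is the handling of degenerate configurations: at a point where $\psi(x) = 0$ the function attains a minimum, forcing $D\psi(x) = 0$ and reducing \eqref{Blocki-1} to the trivial identity $0 \leq 0$; and the passage from the sharper constant $(1+\Lambda/2)$ to the looser $(1+\Lambda)$ in the statement provides precisely the slack needed to absorb sign ambiguities in $\Lambda$ and to make the two-case argument consistent with the $\max$ appearing in \eqref{Blocki-1}.
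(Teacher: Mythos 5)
Your approach---expanding $\psi$ along the steepest-descent ray $x+t\xi$ and splitting on whether $\sqrt{\psi(x)}$ exceeds $\mathrm{dist}(x,\partial D)$---is the right one; the paper cites B{\l}ocki without reproducing a proof, and this is presumably his argument in substance. The gap is in the final step of Case 2 and in the closing comment about sign ambiguities. With $\Lambda := \sup_D\lambda_{\max}(D^2\psi)$, your choice $t=\sqrt{\psi(x)}$ yields $|D\psi(x)| \leq \left(1+\tfrac{1}{2}\Lambda\right)\sqrt{\psi(x)}$. Passing from $1+\tfrac{1}{2}\Lambda$ to $1+\Lambda$ is a loosening only when $\Lambda\geq 0$; for $\Lambda<0$ one has $1+\tfrac{1}{2}\Lambda>1+\Lambda$, so the replacement \emph{tightens} the bound, and your estimate no longer implies the stated one. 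Your remark that this slack ``absorbs sign ambiguities in $\Lambda$'' therefore has the inequality pointing the wrong way.

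For $\Lambda<0$ the inequality \eqref{Blocki-1} as literally printed can in fact fail. Take $N=1$, $D=(-1,1)$, $\psi(t)=\tfrac{1}{4}(1-t^2)$, $x=\tfrac{1}{2}$: then $\psi(x)=3/16$, $|D\psi(x)|=1/4$, $\mathrm{dist}(x,\partial D)=1/2$, $\Lambda=-1/2$, and the right-hand side is $\max\{1/2,\,1/2\}\cdot\sqrt{3}/4=\sqrt{3}/8<1/4=|D\psi(x)|$. So the lemma must be read with $\Lambda$ replaced by a nonnegative quantity such as $\sup_D\|D^2\psi\|$ or $\max\{\sup_D\lambda_{\max}(D^2\psi),\,0\}$; this is in any case all that is used in \eqref{interior-7} and \eqref{m73}, where only the $C^{1,1}$ norm of $\psi=f^{1/(k-1)}$ enters. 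Make that restriction explicit rather than gesturing at sign issues; once $\Lambda\geq 0$ is in force your two-case argument closes cleanly, and indeed with the slightly sharper constant $1+\tfrac{1}{2}\Lambda$.
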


%Note that when the maximum value is achieved by the second term,
%it is same as \eqref{strong}, which is trivial in our proof. Actually, write $d:=\mathrm{dist} (x, \partial \Omega)$,
%choosing $\psi=f^{1/(k-1)}$, we have, for $i=1, \cdots, n$,
%\begin{equation}
%\label{add-1}
%\frac{|D_{i}f|^2}{f}=\frac{|f_{i}+f_{u}u_{i}|^2}{f}\leq \max\left\{\frac{C}{d^2}f^{1-1/(k-1)},Cf^{1-1/(k-1)}\right\}.
%\end{equation}
%Therefore we only need to consider the case $d$ is small.

In this paper we denote $\sigma_{m; i_1, \ldots, i_s} (\kappa) = \sigma_m (\kappa) \mid_{\kappa_{i_1} = \cdots = \kappa_{i_s} = 0}$
for $\{i_1, \ldots, i_s\} \subset \{1, \ldots, n\}$, $1 \leq m \leq n$ and $n-s \leq m$. It is easy to see
\[
\frac{\partial \sigma_k}{\partial \kappa_i} (\kappa) = \sigma_{k-1; i} (\kappa).
\]
For a symmetric matrix $A=\{a_{ij}\}$ with $\lambda (A) \in \Gamma_k$, we denote
\[
\sigma_k^{ij}(A) := \frac{\partial \sigma_k (\lambda (A))}{\partial a_{ij}}
\]
and
\[
\sigma_k^{ij, pq}(A) := \frac{\partial^2 \sigma_k (\lambda (A))}{\partial a_{ij} \partial a_{pq}}.
\]
It is well known that the matrix $(\sigma_k^{ij})$ is positive definite.
%We will omit $A$ to use $\sigma_k^{ij}$ and $\sigma_k^{ij, pq}$ in different proofs for convenience.

We need the following generalized Newton-MacLaurin inequality. Its proof can be found in \cite{Spruck05}.
\begin{lemma}
  For $\lambda\in \Gamma_{k}$ and $k>l\geq 0, r>s\geq 0, k\geq r, l\geq s$, we have
\begin{equation}\label{nm}
  \Bigg[\frac{\sigma_{k}(\lambda)/C^{k}_{n}}{\sigma_{l}(\lambda)/C^{l}_{n}}\Bigg]^{\frac{1}{k-l}}
  \leq \Bigg[\frac{\sigma_{r}(\lambda)/C^{r}_{n}}{\sigma_{s}(\lambda)/C^{s}_{n}}\Bigg]^{\frac{1}{r-s}}.
\end{equation}
Here we define $\sigma_0 \equiv 1$.
\end{lemma}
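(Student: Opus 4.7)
The plan is to reduce \eqref{nm} to a secant-slope monotonicity statement for a concave sequence. Set $p_m := \sigma_m(\lambda)/C^m_n$ for $0 \leq m \leq k$ (with the convention $p_0 = 1$) and $b_m := \log p_m$; these are well-defined real numbers because $\lambda \in \Gamma_k$ forces $p_0, p_1, \ldots, p_k > 0$. In this notation \eqref{nm} is equivalent to
\[
\frac{b_k - b_l}{k - l} \;\leq\; \frac{b_r - b_s}{r - s},
\]
the assertion that the secant slope of $(b_m)$ on the interval $[l, k]$ is dominated by the secant slope on $[s, r]$, under the hypotheses $s \leq l < k$, $s < r \leq k$, $l \geq s$, and $k \geq r$.

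The analytic input is the classical Newton inequality
\[
p_m^2 \;\geq\; p_{m-1}\, p_{m+1}, \qquad 1 \leq m \leq k-1.
\]
I would derive it from the fact that the polynomial $P(t) = \prod_i (t - \lambda_i)$ has only real roots, since $\lambda \in \mathbb{R}^n$. By iterated Rolle's theorem every derivative of $P$ also has only real roots, and the inequality falls out by reading off the coefficients of an appropriate derivative of $P$ and applying the discriminant criterion for a real-rooted quadratic. Newton's inequality in this form is valid for arbitrary real $\lambda$; the role of $\lambda \in \Gamma_k$ is only to guarantee $p_0, \ldots, p_k > 0$ so that logarithms may be taken. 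Taking them, Newton's inequality becomes $b_{m-1} + b_{m+1} \leq 2 b_m$, so the finite sequence $(b_m)_{0 \leq m \leq k}$ is concave.

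For a concave sequence the forward differences $d_m := b_m - b_{m-1}$ are nonincreasing, so any secant slope
\[
\phi(i, j) := \frac{b_j - b_i}{j - i} = \frac{d_{i+1} + \cdots + d_j}{j - i}
\]
is an average of consecutive $d_m$'s, and is nonincreasing in each of its two arguments separately: increasing $j$ appends smaller terms to the average, while increasing $i$ drops its largest terms. Since $s < r \leq k$ ensures $\phi(s, k)$ is defined,
\[
\phi(s, r) \;\geq\; \phi(s, k) \;\geq\; \phi(l, k),
\]
the first step shifting the right endpoint from $r$ to $k \geq r$, the second shifting the left endpoint from $s$ to $l \geq s$. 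Exponentiating $(b_k - b_l)/(k - l) \leq (b_r - b_s)/(r - s)$ yields \eqref{nm}. I anticipate no substantive obstacle; the only point requiring minor care is confirming Newton's inequality in the range $1 \leq m \leq k - 1$ given only $\lambda \in \Gamma_k$, which reduces, as indicated, to a standard Rolle-plus-discriminant argument together with the positivity of $p_0, \ldots, p_k$ that $\Gamma_k$ furnishes.
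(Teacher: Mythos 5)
The paper does not include a proof of this lemma; it simply cites Spruck's lecture notes \cite{Spruck05} for it. Your argument---establishing the log-concavity of the normalized symmetric functions $p_m=\sigma_m(\lambda)/C^m_n$ via Newton's inequality $p_m^2\geq p_{m-1}p_{m+1}$ (which requires only that $\lambda$ be real, with $\lambda\in\Gamma_k$ supplying positivity of $p_0,\dots,p_k$ so that $b_m=\log p_m$ is defined), and then reading off \eqref{nm} as the secant-slope monotonicity of the concave sequence $(b_m)$, decreasing separately in each endpoint---is correct and is precisely the standard route to the generalized Newton--MacLaurin inequality taken in the cited reference.
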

The following lemma is Lemma 3.1 in \cite{CW01}.
\begin{lemma}
\label{lem-3}
  Suppose $\lambda\in \Gamma_{k}$ and $\lambda_{1}\geq \cdots \geq \lambda_{n}$. For any $\overline{\delta} \in (0, 1)$, if there exist some $ \epsilon >0$
  such that

  (i) $\sigma_{k}(\lambda)\leq \epsilon \lambda_{1}^{k}$ \\
  or

  (ii) $|\lambda_{i}|\leq \epsilon\lambda_{1}$ for $i=k+1, \ldots, n$,\\
  then we have $\lambda_{1}\sigma_{k-1;1}\geq (1-\overline{\delta})\sigma_{k}$.
\end{lemma}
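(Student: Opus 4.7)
The plan is to reformulate the conclusion as the bound $\sigma_{k;1}(\lambda) \leq \bar\delta\,\sigma_k(\lambda)$, using the identity
\[
\sigma_k(\lambda) = \lambda_1\sigma_{k-1;1}(\lambda) + \sigma_{k;1}(\lambda).
\]
From standard facts for $\lambda\in\Gamma_k$ with $\lambda_1\geq\cdots\geq\lambda_n$, one has $\lambda_1>0$ and $\sigma_{k-1;1}(\lambda)>0$ (the latter because $(\lambda_2,\ldots,\lambda_n)\in\Gamma_{k-1}$ viewed in $\mathbb{R}^{n-1}$), so $\lambda_1\sigma_{k-1;1}(\lambda)>0$. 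Consequently, if $\sigma_{k;1}(\lambda)\leq 0$ the conclusion follows immediately; henceforth I assume $\sigma_{k;1}(\lambda)>0$.

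Under this assumption $\sigma_k(\lambda_2,\ldots,\lambda_n)=\sigma_{k;1}(\lambda)>0$, and combined with $(\lambda_2,\ldots,\lambda_n)\in\Gamma_{k-1}$ this upgrades to $(\lambda_2,\ldots,\lambda_n)\in\Gamma_k$ viewed in $\mathbb{R}^{n-1}$. I then apply the generalized Newton-MacLaurin inequality \eqref{nm} to $(\lambda_2,\ldots,\lambda_n)$:
\[
\left(\frac{\sigma_k(\lambda_2,\ldots,\lambda_n)}{C_{n-1}^k}\right)^{1/k} \leq \left(\frac{\sigma_{k-1}(\lambda_2,\ldots,\lambda_n)}{C_{n-1}^{k-1}}\right)^{1/(k-1)},
\]
which rearranges to $\sigma_{k;1}(\lambda)/\sigma_{k-1;1}(\lambda) \leq C(n,k)\,\sigma_{k;1}(\lambda)^{1/k}$, whence
\[
\frac{\sigma_{k;1}(\lambda)}{\lambda_1\sigma_{k-1;1}(\lambda)} \leq \frac{C(n,k)\,\sigma_{k;1}(\lambda)^{1/k}}{\lambda_1}.
\]

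In case (i), since $\lambda_1\sigma_{k-1;1}(\lambda)>0$, we obtain $\sigma_{k;1}(\lambda)\leq\sigma_k(\lambda)\leq\epsilon\lambda_1^k$, so the previous display is at most $C(n,k)\epsilon^{1/k}$. In case (ii), I decompose $(\lambda_2,\ldots,\lambda_n)=\mu\sqcup\nu$ with $\mu=(\lambda_2,\ldots,\lambda_k)$ and $\nu=(\lambda_{k+1},\ldots,\lambda_n)$: the convolution $\sigma_k(\mu\sqcup\nu)=\sum_{j\geq 0}\sigma_{k-j}(\mu)\sigma_j(\nu)$ forces every summand to contain at least one factor from $\nu$ (because $|\mu|=k-1<k$), yielding $\sigma_{k;1}(\lambda)\leq C(n,k)\epsilon\lambda_1^k$, so the same bound $\sigma_{k;1}(\lambda)^{1/k}/\lambda_1\leq C(n,k)\epsilon^{1/k}$ holds. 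In either case, combining with $\sigma_k(\lambda)\geq\lambda_1\sigma_{k-1;1}(\lambda)$, I conclude $\sigma_{k;1}(\lambda)/\sigma_k(\lambda)\leq C(n,k)\epsilon^{1/k}$, which is at most $\bar\delta$ once $\epsilon$ is chosen sufficiently small depending on $\bar\delta$, $n$, $k$.

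The key structural step---that $\sigma_{k;1}(\lambda)>0$ together with $\lambda\in\Gamma_k$ elevates $(\lambda_2,\ldots,\lambda_n)$ from $\Gamma_{k-1}$ into $\Gamma_k$ over $\mathbb{R}^{n-1}$---is what enables the direct application of Newton-MacLaurin and is the only nontrivial ingredient; without it, case (i) with $\sigma_{k;1}(\lambda)>0$ would require a more delicate combinatorial analysis in the spirit of the decomposition used in case (ii).
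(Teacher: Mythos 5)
Your proof is correct. Note first that the paper does not actually prove this lemma---it simply cites it as Lemma~3.1 of Chou--Wang \cite{CW01}---so there is no in-paper argument to compare against; I will assess your argument on its own merits.

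The logical skeleton is sound. The identity $\sigma_k(\lambda)=\lambda_1\sigma_{k-1;1}(\lambda)+\sigma_{k;1}(\lambda)$ correctly reduces the claim to $\sigma_{k;1}(\lambda)\leq\bar\delta\,\sigma_k(\lambda)$, and the reduction to the case $\sigma_{k;1}(\lambda)>0$ is immediate. Your pivotal observation---that $(\lambda_2,\ldots,\lambda_n)\in\Gamma_{k-1}$ in $\mathbb{R}^{n-1}$ always, and that the extra assumption $\sigma_{k;1}(\lambda)=\sigma_k(\lambda_2,\ldots,\lambda_n)>0$ promotes it into $\Gamma_k$ in $\mathbb{R}^{n-1}$---is exactly right and is what makes the generalized Newton--MacLaurin inequality \eqref{nm} (with $l=s=0$, $r=k-1$, in dimension $n-1$) directly applicable; rearranging it to $\sigma_{k;1}/\sigma_{k-1;1}\leq C(n,k)\,\sigma_{k;1}^{1/k}$ is a clean calculation. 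Case~(i) then follows from $\sigma_{k;1}\leq\sigma_k\leq\epsilon\lambda_1^k$, and in case~(ii) your convolution decomposition $\sigma_k(\mu\sqcup\nu)=\sum_j\sigma_{k-j}(\mu)\sigma_j(\nu)$ with $|\mu|=k-1$ correctly forces $j\geq1$ in each nonzero term; together with the standard bound $|\lambda_i|\leq(n-1)\lambda_1$ (from $\sigma_1>0$) and $|\lambda_i|\leq\epsilon\lambda_1$ for $i>k$, this gives $\sigma_{k;1}\leq C(n,k)\epsilon\lambda_1^k$ once $\epsilon\leq 1/2$, feeding into the same estimate. Finally, $\sigma_k\geq\lambda_1\sigma_{k-1;1}$ (valid since $\sigma_{k;1}>0$) converts the bound into $\sigma_{k;1}/\sigma_k\leq C(n,k)\epsilon^{1/k}$, and choosing $\epsilon$ small closes the argument. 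The boundary cases are also implicitly handled: if $k=n$ then $\sigma_{k;1}\equiv 0$ and you are done by the first reduction, and $k\geq 2$ throughout is consistent with the paper's standing assumption \eqref{condition}. One small stylistic remark: it would be cleaner to state explicitly that you pass from $\sigma_{k;1}/(\lambda_1\sigma_{k-1;1})\leq C\epsilon^{1/k}$ to $\sigma_{k;1}/\sigma_k\leq C\epsilon^{1/k}$ via $\sigma_k\geq\lambda_1\sigma_{k-1;1}$, but you do say this, so the step is fine. The argument is complete as written.
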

The following lemma can be regarded as another form of Lemma 3.1 in \cite{I91}.
\begin{lemma}
\label{lem-4}
Suppose the function $v \in C^2 (\Omega)$ is $(k+1)$-convex. If we regard $v$ as a function defined on the hypersurface
$M_u$, i.e., $v(X) = v(x)$ for $X = (x, u(x))\in M_{u}$, then we have
\begin{equation}
\label{add-1}
\lambda (\nabla^2 v - \nu (v) h) (X) \in \ol{\Gamma}_k, \mbox{ for any } X \in M_{u},
\end{equation}
where $\nabla$ is the Levi-Civita connection on $M_u$,
$h$ is the second fundamental form of $M_u$ and $\nu (v)$ is defined by $\nu (v) := \sum_{i=1}^n \nu_i v_i$.
\end{lemma}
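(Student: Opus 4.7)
The plan is to reduce the assertion to a coordinate identity on $M_u$ and then to a standard restriction property of Gårding cones.

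First I would work in the graph chart $X(x)=(x,u(x))$, where the induced metric is $g_{ij}=\delta_{ij}+u_iu_j$ with Christoffel symbols $\Gamma^k_{ij}=u_ku_{ij}/w^2$, and $h_{ij}=u_{ij}/w$ with $w=\sqrt{1+|Du|^2}$. Extending $v$ trivially in the $x_{n+1}$-direction yields $\nu(v)=-u_kv_k/w$. A direct substitution then gives
\[
(\nabla^2 v)_{ij}-\nu(v)h_{ij}=\Bigl(v_{ij}-\frac{u_{ij}u_kv_k}{w^2}\Bigr)-\Bigl(-\frac{u_kv_k}{w}\Bigr)\frac{u_{ij}}{w}=v_{ij},
\]
so in the coordinate basis the tensor $\nabla^2 v-\nu(v)h$ on $T_XM_u$ is represented by the Euclidean Hessian $V:=D^2v$; in the orthonormal frame $E_i=\gamma^{ij}\partial_j$ its matrix therefore becomes $\gamma^{-1}V\gamma^{-1}$.

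Next I would interpret this invariantly. Let $\tilde V:=\mathrm{diag}(V,0)$ be the symmetric bilinear form on $\mathbb{R}^{n+1}$ extending $V$ trivially; since $\tilde V(X_i,X_j)=V_{ij}$ on the basis $X_i=(e_i,u_i)$ of $T_XM_u$, the tensor $\nabla^2 v-\nu(v)h$ is precisely the restriction of $\tilde V$ to the hyperplane $T_XM_u\subset\mathbb{R}^{n+1}$. The $n+1$ eigenvalues of $\tilde V$ are $(\lambda_1(V),\ldots,\lambda_n(V),0)$, and since $\sigma_m(\lambda_1(V),\ldots,\lambda_n(V),0)=\sigma_m(\lambda(V))\geq 0$ for $m\leq k+1$, they lie in $\overline{\Gamma}_{k+1}$ (viewed in $\mathbb{R}^{n+1}$). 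I would then appeal to the standard Gårding-cone fact that the eigenvalues of any $n$-dimensional restriction of a symmetric $(n+1)\times(n+1)$ matrix whose eigenvalues lie in $\overline{\Gamma}_{k+1}$ themselves lie in $\overline{\Gamma}_k$, which is essentially Lemma~3.1 of \cite{I91}.

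As a self-contained alternative I would rotate so that $Du=|Du|e_1$, giving $\gamma^{-1}=\mathrm{diag}(1/w,1,\ldots,1)$; splitting each $m\times m$ principal minor of $\gamma^{-1}V\gamma^{-1}$ according to whether it contains the first index produces the explicit identity
\[
\sigma_m(\gamma^{-1}V\gamma^{-1})=\frac{1}{w^2}\sigma_m(V)+\frac{|Du|^2}{w^2}\sigma_m(V'),
\]
with $V'$ the principal submatrix of $V$ obtained by deleting the first row and column. Both summands are non-negative for $m\leq k$ by the classical principal-submatrix property ($\lambda(V)\in\overline{\Gamma}_{k+1}\Rightarrow\lambda(V')\in\overline{\Gamma}_k$). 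The hard part in either approach is this Gårding-cone restriction property itself; citing \cite{I91} is the cleanest route, and a self-contained derivation would first need to verify that coordinate deletion sends $\overline{\Gamma}_{k+1}^n$ into $\overline{\Gamma}_k^{n-1}$ and then handle general Cauchy interlacers via the partial-fraction parameterization.
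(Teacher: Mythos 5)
Your argument reduces the claim to the identity that $\nabla^2 v - \nu(v)h$ in the orthonormal frame is represented by $\tau D^2 v\,\tau$ with $\tau = (\gamma^{ij})$, and then invokes Lemma~3.1 of \cite{I91} for the Gårding-cone restriction fact — which is exactly the route the paper takes. Your two elaborations (the $\mathrm{diag}(V,0)$ restriction picture, and the explicit split $\sigma_m(\tau V \tau)=w^{-2}\sigma_m(V)+|Du|^2 w^{-2}\sigma_m(V')$, which also recovers the inequality $\sigma_j(\tau D^2 v\,\tau)\geq w^{-2}\sigma_j(D^2 v)$ the paper quotes) are clean unpackings of the cited lemma rather than a genuinely different proof, and, as you note yourself, they ultimately rest on the same interlacing property.
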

\begin{proof}
Let $\{\epsilon_1, \ldots, \epsilon_{n+1}\}$ be the natural basis of $\mathbb{R}^{n+1}$
and $\{e_1,\ldots,e_n\}$ be orthonormal frame on $TM_u$ given by
\[
e_i := \sum_{j=1}^n \gamma^{ij} (\epsilon_j + u_j \epsilon_{n+1}), \ \ i = 1, \ldots, n,
\]
where $\gamma^{ij}=\delta_{ij}-\frac{u_iu_j}{w(1+w)}$. Let $\tau$ be the matrix $\{\gamma^{ij}\}$.
By Lemma 3.1 of \cite{I91} (seeing Proposition 2.1 in \cite{JW21} also), we have $\lambda (\tau D^2 v \tau) \in \ol{\Gamma}_k$
and furthermore
\begin{equation}
\label{add-10}
\sigma_j (\lambda (\tau D^2 v \tau)) \geq \frac{1}{w^2} \sigma_j (\lambda (D^2 v)), \ \ j = 1, \ldots, k.
\end{equation}
%Note that
%\[
%\begin{aligned}
%\nabla_{e_i} e_j = \,& D_{e_i} e_j - \langle D_{e_i} e_j, \nu \rangle \nu \\
% = \,& \gamma^{is} (\gamma^{jt})_s \epsilon_t + \big(\gamma^{is} (\gamma^{jt})_s u_t
%   + \gamma^{is} \gamma^{jt} u_{st}\big) \epsilon_{n+1} - h_{ij} \nu
%\end{aligned}
%\]
We regard $v$ as a function defined on a neighborhood of $M_u$ in $\mathbb{R}^{n+1}$ by $v (x, x_{n+1}) \equiv v (x)$.
Therefore, we have
\[
\begin{aligned}
  \nabla_{ij}v
  %= \,&  \nabla_{i}(\nabla_{j}v)-\Gamma_{ij}^{k}\nabla_{k}v-h_{ij}\tilde{D} _{\tildesymbol{n}}v+h_{ij}\tilde{D}_{\nu}v\\
  = \,& \nabla^2 v (e_i, e_j)
  = D^2 v (e_i, e_j) + \nu (v) h_{ij}\\
      = \,& \sum_{s,t=1}^{n}\gamma^{is} v_{st} \gamma^{jt} + \nu (v) h_{ij},
\end{aligned}
\]
where $v_{st} = \frac{\partial^2 v}{\partial x_s \partial x_t}$ denote the Euclidean derivatives.
Thus, by \eqref{add-10}, we obtain \eqref{add-1}.
\end{proof}

\section{Pogorelov estimates for Hessian equation}

In this section we establish the Pogorelov estimate for degenerate $k$-Hessian equations.

\begin{theorem}
\label{interior_h}
Suppose $\Omega$ is bounded and $f > 0$ satisfies \eqref{condition}.
Assume that there is a $k$-convex function
$\ol u \in C^{1,1} (\ol \Omega)$ such that
$$
\ol u \geq u \;\mbox{in}\; \Omega\;\;\mbox{and}\;\;\ol u=u \;\mbox{on}\; \partial\Omega.
$$
Then the $k$-convex solution $u \in C^4 (\ol \Omega)$
of the Dirichlet problem \eqref{1-1h} satisfies the estimates
\begin{equation}
\label{interior-1}
(\ol u - u)^{\alpha} |D^2 u| \leq C,
\end{equation}
where $\alpha=k-1$ for $k>2$ and $\alpha=2$ for $k=2$, $C$ depends only on $n$, $k$, $\|u\|_{C^1 (\ol \Omega)}$,
$\|\ol u\|_{C^{1,1} (\ol \Omega)}$ and $\|f^{1/(k-1)}\|_{C^{1,1} (\ol \Omega \times [\inf u, \sup u])}$, but is independent of the lower bound of $f$.
\end{theorem}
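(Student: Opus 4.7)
My plan is to follow the classical Pogorelov scheme, adapted to the degenerate setting by systematically exploiting Blocki's lemma (Lemma \ref{Blocki}) for $\widetilde f := f^{1/(k-1)}$ together with the generalized Newton-MacLaurin inequality \eqref{nm}. The auxiliary function will be taken to be
\[
W(x) = \alpha \log \eta(x) + \log \lambda_1(D^2 u(x)) + \phi\bigl(\tfrac12 |Du(x)|^2\bigr),
\]
where $\eta := \ol u - u$, $\lambda_1$ is the largest eigenvalue of $D^2 u$, $\alpha = \max\{2,k-1\}$ as in the statement, and $\phi$ is an auxiliary function (typically $\phi(s) = -A \log(1 - s/(2\sup |Du|^2))$ or similar) chosen at the end to absorb gradient terms. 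Since $\eta$ vanishes on $\partial\Omega$, we may assume that $W$ attains its maximum at an interior point $x_0$; after rotating coordinates we diagonalize $D^2 u(x_0)$ with $\lambda_1 \geq \cdots \geq \lambda_n$ and (by the standard perturbation trick) treat $\lambda_1$ as a smooth function.

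The first step is to write out $W_i(x_0) = 0$ and $\sigma_k^{ii} W_{ii}(x_0) \leq 0$, and to substitute the standard identities
\[
\lambda_{1,i} = u_{11,i}, \qquad \lambda_{1,ii} = u_{11,ii} + 2 \sum_{p>1} \frac{(u_{1p,i})^2}{\lambda_1 - \lambda_p},
\]
together with the once- and twice-differentiated equation
\[
\sigma_k^{ii} u_{ii,l} = \partial_l f, \qquad \sigma_k^{ii} u_{ii,ll} = \partial_{ll} f - \sigma_k^{ij,pq} u_{ij,l} u_{pq,l}.
\]
The concavity of $\sigma_k^{1/k}$ applied to $-\sigma_k^{ij,pq} u_{ij,1} u_{pq,1}$ produces a nonnegative contribution that, combined with the Codazzi-type term $\sum_{p>1} \sigma_k^{ii}(u_{1p,i})^2/(\lambda_1-\lambda_p) \geq 0$, will absorb the ``third derivative squared'' terms generated by $(W_i)^2$ via the first-order identity at $x_0$. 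This part parallels \cite{CW01} and \cite{SUW12} and I would split it into the two standard cases via Lemma \ref{lem-3}: (i) $\lambda_n \leq -\epsilon \lambda_1$ for some small $\epsilon$, and (ii) $|\lambda_i| \leq \epsilon \lambda_1$ for $i \geq k+1$, in which case $\lambda_1 \sigma_{k-1;1} \geq (1-\ol\delta) \sigma_k$ and we control most of the positive terms.

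The genuinely new step, and where the degeneracy condition \eqref{condition} enters, is the treatment of $\partial_{ll} f$, $\sigma_k^{ii}\ol u_{ii}/\eta$, and the $f/\eta$ contribution coming from $\sigma_k^{ii}\eta_{ii}/\eta = \sigma_k^{ii}\ol u_{ii}/\eta - kf/\eta$. Using $f = \widetilde f^{k-1}$ and Lemma \ref{Blocki} applied to $\widetilde f \geq 0$ on $\ol \Omega$, we obtain $|D\widetilde f| \leq C \widetilde f^{1/2}$, so that
\[
|\partial_l f| \leq C\, \widetilde f^{\,k-3/2}, \qquad |\partial_{ll} f| \leq C\, \widetilde f^{\,k-2},
\]
which are the sharp bounds consistent with \eqref{condition}. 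Dividing the bound $|\partial_{ll} f|/\lambda_1 \leq C\widetilde f^{k-2}/\lambda_1$ by $\eta^{-\alpha}$ one checks that the exponent $\alpha$ is precisely large enough so that $\widetilde f^{k-2}/\lambda_1$ can be bounded, via \eqref{nm} (comparing $\sigma_k$ with $\sigma_{k-1}$ and with $\sigma_1$), by $C \sum_i \sigma_k^{ii} + C/\lambda_1$; these terms are absorbable by the positive $\phi'$-term $\phi'\sigma_k^{ii}\lambda_i^2$ (after multiplying the first-order identity by $\phi'$) and by the term $\sigma_k^{ii}\eta_i^2/\eta^2$ once $\phi'$ is chosen appropriately large. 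The term $kf/\eta$ is controlled by pairing $\sigma_k^{ii}\lambda_i = kf$ with the factor $\eta^\alpha$, noting $f/\eta = f \cdot \eta^{\alpha-1}/\eta^\alpha$ where $\eta^{\alpha-1}$ is bounded.

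The main obstacle, and where the split $k=2$ versus $k>2$ comes in, is precisely this absorption: for $k>2$ the bound $|\partial_{ll}f| \leq C\widetilde f^{k-2}$ still degenerates, and the Newton-MacLaurin step above only succeeds if $\alpha \geq k-1$. For $k=2$ the degeneracy in $\partial_{ll}f$ disappears (since $\widetilde f = f \in C^{1,1}$ and $k-2=0$), but then the commutator term $-\sigma_k^{ij,pq}u_{ij,1}u_{pq,1}$ contributes less and one needs $\alpha \geq 2$ for the gradient of $\eta$ to generate enough positive cushion; this is analogous to \cite{Blocki03b}. Carrying through the arithmetic with these choices of $\alpha$, together with a sufficiently large $A$ in $\phi$, yields $\lambda_1(x_0) \eta(x_0)^\alpha \leq C$, which combined with $W(x)\leq W(x_0)$ gives the estimate \eqref{interior-1}.
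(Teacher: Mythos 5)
Your plan has the right skeleton — the two‐case split via Lemma \ref{lem-3}, the concavity of $\sigma_k^{1/k}$ to control $-\sigma_k^{ij,pq}u_{ij,1}u_{pq,1}$, B{\l}ocki's lemma applied to $\widetilde f = f^{1/(k-1)}$, and the Newton--MacLaurin inequality \eqref{nm} to convert $\sum\sigma_k^{ii}$ into a power of $\lambda_1$ times $f^{1-1/(k-1)}$ — and matches the paper's strategy in all these respects. But there is a genuine gap in the choice of the auxiliary function. Your $W = \alpha\log\eta + \log\lambda_1 + \phi(\tfrac12|Du|^2)$ omits the term $\tfrac{b}{2}|x|^2$ that the paper places in the exponent, and this term is not cosmetic: differentiating it twice and contracting with $\sigma_k^{ij}$ produces the positive term $b\sum_i\sigma_k^{ii}$, which via \eqref{GC2-9} gives $b\sum\sigma_k^{ii}\gtrsim \lambda_1^{1/(k-1)}f^{1-1/(k-1)}$. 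In Case~2 (only $\lambda_1$ through $\lambda_{k-1}$ large), this is the \emph{only} term available to absorb the $-Cf^{1-1/(k-1)}$ error coming from $\partial_{11}f/\lambda_1$ and from $\alpha\sigma_k^{ii}\eta_{ii}/\eta\geq -\alpha kf/\eta$. You propose to absorb $C\sum_i\sigma_k^{ii}$ by the $\phi'$-term $\phi'\sigma_k^{ii}\lambda_i^2$, but in Case~2 one only has $\sigma_k^{ii}\lambda_i^2\gtrsim\sigma_k^{11}\lambda_1^2\approx f\lambda_1$, and since $f$ is allowed to be arbitrarily small this does not dominate $\sum\sigma_k^{ii}\approx\sigma_{k-1}$, which can stay bounded away from zero while $f\to 0$. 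No choice of $\phi(\tfrac12|Du|^2)$ produces a $\sum\sigma_k^{ii}$ contribution, so the absorption step as described fails precisely in the degenerate regime the theorem is about. (In the non-degenerate Chou--Wang setting the $|x|^2$ term is indeed unnecessary because $f\geq c_0>0$ makes $\sigma_k^{11}\lambda_1^2\gtrsim\lambda_1\to\infty$; the degenerate case is exactly where this breaks down.)

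A second, smaller point: B{\l}ocki's lemma \eqref{Blocki-1} does not give the uniform bound $|D\widetilde f|\leq C\widetilde f^{1/2}$ that you use to conclude $|\partial_l f|\leq C\widetilde f^{k-3/2}$ and $|\partial_{ll}f|\leq C\widetilde f^{k-2}$; the constant degenerates as $1/\mathrm{dist}(x,\partial\Omega)$, and the correct consequence is $(\partial_1 f)^2/f\leq (C/d^2)f^{1-1/(k-1)}$. The paper then uses $0\leq\eta\leq Bd$ together with the reduction ``$\lambda_1 d^2\geq 1$, else done'' to convert the $1/d^2$ factor into a harmless $\lambda_1$ factor that is divided out. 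This step needs to be made explicit, since otherwise you are tacitly assuming the stronger hypothesis \eqref{strong} rather than \eqref{condition}. Finally, the stated reason for requiring $\alpha=2$ when $k=2$ is not really that ``the commutator contributes less''; the genuine constraint is that in Case~2 one applies Lemma \ref{lem-3} with $\overline{\delta}=(\alpha-1-\epsilon_0)/(2\alpha)$, which requires $\alpha>1+\epsilon_0>1$, and this forces $\alpha>1$ hence $\alpha=2$ when $k=2$.

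Both issues are fixable within your framework: add $\tfrac{b}{2}|x|^2$ to the exponent (so the test function becomes the paper's $\rho^\alpha\exp\{\tfrac{\delta}{2}|Du|^2+\tfrac{b}{2}|x|^2\}\lambda_1$), and track the $1/d$ factor in B{\l}ocki's bound through the estimate. With those repairs the argument goes through as outlined.
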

\begin{proof}
Let
\[
H (x, \xi) = \rho^{\alpha} (x) \exp\{\frac{\delta}{2} |D u (x)|^2+ \frac{b}{2} |x|^2\} u_{\xi \xi} (x),
\]
where $\rho = \ol u - u$, $|\xi| = 1$, $\delta$ and $b$ are positive constants to be determined.
Suppose the maximum of $H$ is attained at $x_0 \in \Omega$ and $\xi = (1, 0, \ldots, 0)$. We may assume
$D^2 u$ is diagonal and $u_{11} \geq \cdots \geq u_{nn}$ at $x_0$ by a rotation of axes if necessary.
Therefore, the function
\[
\alpha \log \rho + \frac{\delta}{2} |D u (x)|^2 + \frac{b}{2} |x|^2 + \log u_{11},
\]
also achieves its maximum at $x_0$ where
we have
\begin{equation}
\label{interior-2}
0  = \frac{\alpha\rho_i}{\rho} + \delta u_i u_{ii} + b x_i + \frac{u_{11i}}{u_{11}} \mbox{ for each } i = 1, \ldots, n
\end{equation}
and
\begin{equation}
\label{interior-3}
0 \geq  \sigma_k^{ii} \left\{\frac{\alpha\rho_{ii}}{\rho} - \alpha\frac{\rho_{i}^2}{\rho^2} + \delta u_{ii}^2 + \delta u_l u_{lii} + b
      + \frac{u_{11ii}}{u_{11}} - \frac{u_{11i}^2}{u_{11}^2}\right\}.
\end{equation}
Differentiating the equation \eqref{1-1h} twice we get
\begin{equation}
\label{interior-4}
|\sigma_k^{ii} u_{lii}| = |\partial_{l}f|=|f_l+f_{u}u_{l}| \leq C f^{1 - 1/(k-1)}, \mbox{ for each } l = 1, \ldots, n
\end{equation}
and
\begin{equation}
\label{interior-5}
\sigma_k^{ii} u_{11ii} = \partial_{11}f - \sigma_k^{ij, pq} u_{ij1} u_{pq1}
                  \geq - C u_{11} f^{1 - 1/(k-1)} - \frac{(\partial_{1}f)^2}{f}- \sigma_k^{ij, pq} u_{ij1} u_{pq1}
\end{equation}
provided $u_{11}$ is sufficiently large by the fact $f^{1/(k-1)} \in C^{1,1} (\ol \Omega\times \mathbb{R})$.
Since $D^2 u (x_0)$ is diagonal, we have, at $x_0$,
\begin{equation}
\label{add-5}
\sigma_k^{ij, pq} = \left\{ \begin{aligned}
   \sigma_{k - 2; ip} & \;\;\mbox{ if }~ i=j, p=q, i \neq q, \\
   - \sigma_{k - 2; ip} & \;\;\mbox{ if }~ i=q, j=p, i \neq j, \\
   0 & \;\;\mbox{ otherwise.}
\end{aligned} \right.
\end{equation}
Because $D^2 u(x_0)$ is diagonal again, we find $\sigma_k^{ii} = \sigma_{k-1;i}$ at $x_0$.
By the concavity of $\sigma_k^{1/k}$ in $\Gamma_k$, we have
\[
\sum_{i\neq j} \sigma_{k-2; ij} u_{ii1} u_{jj1}
  \leq \left(1- \frac{1}{k}\right) \frac{1}{f} (\sigma_{k-1;i} u_{ii1})^2 = \left(1-\frac{1}{k}\right) \frac{(\partial_{1}f)^2}{f}.
\]
It follows that
\begin{equation}
\label{interior-6}
\begin{aligned}
- \sigma_k^{ij, pq} u_{ij1} u_{pq1}
  = \,& \sum_{i \neq j} \sigma_{k-2; ij} u_{ij1}^2 - \sum_{i\neq j} \sigma_{k-2; ij} u_{ii1} u_{jj1}\\
  \geq \,& \sum_{i \neq j} \sigma_{k-2; ij} u_{ij1}^2 - C \frac{(\partial_{1}f)^2}{f}\\
  \geq \,& \sum_{i \geq 2} 2\sigma_{k-2; i1} u_{11i}^2 - C \frac{(\partial_{1}f)^2}{f}.
\end{aligned}
\end{equation}
Applying Lemma \ref{Blocki} to $\psi = f^{1/(k-1)}$ in the domain $D := \Omega \times (-\sup |u|-1, \sup |u|+1)$, we get
\begin{equation}
\label{interior-7}
\frac{(\partial_{1} f)^2}{f} \leq \frac{C}{d^2} f^{1- 1/(k-1)},
\end{equation}
where $d = d (x) := \mathrm{dist} (x, \partial \Omega)$.
Combining \eqref{interior-3}-\eqref{interior-7}, we obtain
\begin{equation}
\label{interior-8}
\begin{aligned}
0 \geq \,& \delta \sigma_k^{ii} u_{ii}^2 + b \sum \sigma_k^{ii} + \frac{2}{u_{11}}\sum_{i \geq 2} \sigma_{k-2; i1} u_{11i}^2 - C \delta f^{1-1/(k-1)}\\
  & + \alpha \sigma_k^{ii} \left(\frac{\rho_{ii}}{\rho} - \frac{\rho_{i}^2}{\rho^2}\right) - \sigma_k^{ii} \frac{u_{11i}^2}{u_{11}^2}
  - Cf^{1- 1/(k-1)} - \frac{C}{u_{11} d^2} f^{1- 1/(k-1)}.
\end{aligned}
\end{equation}
Note that there exist a constant $B$ depending only on $|\ol u-u|_{C^1 (\ol \Omega)}$ such that
\begin{equation}
\label{super-1}
0 \leq \ol u - u \leq B d \mbox{ in } \Omega.
\end{equation}
Therefore we may assume $u_{11} d^2 \geq 1$ for otherwise we are done. It follows from \eqref{interior-8} that
\begin{equation}
\label{interior-9}
\begin{aligned}
0 \geq \,& \delta \sigma_k^{ii} u_{ii}^2 + b \sum \sigma_k^{ii} + \frac{2}{u_{11}}\sum_{i \geq 2} \sigma_{k-2; i1} u_{11i}^2\\
  & + \alpha \sigma_k^{ii} \left(\frac{\rho_{ii}}{\rho} - \frac{\rho_{i}^2}{\rho^2}\right) - \sigma_k^{ii} \frac{u_{11i}^2}{u_{11}^2}
  - C f^{1- 1/(k-1)}.
\end{aligned}
\end{equation}
By the fact that $\sigma_k^{1/k}$ is concave for $k$-convex functions and homogeneous of degree one, we have
$\sigma_k^{ii} \ol u_{ii} \geq 0$
and
\begin{equation}
\label{add-2}
\sigma_k^{ii} \rho_{ii} \geq - \sigma_k^{ii} u_{ii} = - k f.
\end{equation}
Now we consider two cases as in \cite{CW01}.
\\ \hspace*{\fill} \\
\noindent
\textbf{Case 1.} $u_{kk}\geq \varepsilon u_{11}$, where $\varepsilon$ is a positive constant to be chosen.
It follows that
\begin{equation}\label{case-1_h}
  \sum \sigma_k^{ii}u_{ii}^{2}>\sigma^{kk}_{k}u_{kk}^{2}\geq \theta u_{11}^{2}\sum \sigma_k^{ii},
\end{equation}
where $\theta=\theta(n,k,\varepsilon)$.

By \eqref{interior-2}, we have
\begin{equation}
\label{add-3}
\frac{u_{11i}^2}{u_{11}^2}\leq C(\frac{\rho_{i}^{2}}{\rho^{2}}+\delta^{2}u_{ii}^{2}+b^{2}) \mbox{ for } i =1, \ldots, n.
\end{equation}

Note that $\sigma_{1}\geq u_{11}\geq \frac{1}{n}\sigma_{1}$ since $u$ is $k$-convex and $u_{11}$ is the largest eigenvalue.
Using the generalized Newton-MacLaurin inequality \eqref{nm} with $l= k-1$, $r=k$ and $s=1$, we have
\begin{equation}
\label{GC2-9}
\sum \sigma_k^{ii} = (n - k + 1) \sigma_{k - 1} \geq c_0 \sigma_k^{1 - 1/(k-1)} \sigma_1^{1/(k-1)},
\end{equation}
where $c_{0}=(n-k+1)({C^{k-1}_{n}}/{C^{k}_{n}})({C^{k}_{n}}/{C^{1}_{n}})^{1/(k-1)}$.
By \eqref{interior-9}-\eqref{GC2-9}, we have
\begin{equation}
\label{interior-10}
\begin{aligned}
  0 \geq \,& (\delta-C\delta^{2}) \sigma_k^{ii} u_{ii}^2 + (b-Cb^{2}) \sum \sigma_k^{ii}
             + \frac{2}{u_{11}}\sum_{i \geq 2} \sigma_{k-2; i1} u_{11i}^2\\
         \,& - \alpha \frac{kf}{\rho} - C\frac{|D\rho|^{2}}{\rho^{2}}\sum \sigma_k^{ii} - C f^{1- 1/(k-1)} \\
    \geq \,& (\delta-C\delta^{2})\theta u_{11}^{2}\sum \sigma_{k}^{ii}
             + (b-Cb^{2}-\frac{C}{\rho^{2}}) \sum \sigma_k^{ii}\\
         \,& - \alpha \frac{kf}{\rho} - C f^{1- 1/(k-1)}\\
    \geq \,& \frac{1}{2}c_{0}\delta_{\theta}u_{11}^{2+1/(k-1)}f^{1-1/(k-1)}
              -\frac{C}{\rho}f^{1-1/(k-1)}-Cf^{1-1/(k-1)} \\
         \,& + \frac{1}{2}\delta_{\theta}u_{11}^{2} \sum \sigma_k^{ii}+b_{0} \sum \sigma_k^{ii}-\frac{C}{\rho^{2}} \sum \sigma_k^{ii},
\end{aligned}
\end{equation}
where $\delta_{\theta}=(\delta-C\delta^{2})\theta$, $b_{0}=b-Cb^{2}$ are positive by choosing $\delta$ and $b$ small enough respectively.
Then we get $\rho|D^{2}u|\leq C$ from \eqref{interior-10}.
\\ \hspace*{\fill} \\
\noindent
\textbf{Case 2.} $u_{kk}< \varepsilon u_{11}$.
We then have $|u_{jj}|\leq C\varepsilon u_{11}$ for $j=k, \ldots, n$. % which implies Lemma~\ref{lem-3}.
By \eqref{interior-2} and Cauchy-Schwarz inequality, we obtain, for any $\epsilon_0 > 0$
\begin{equation}\label{constant}
 \frac{\rho_{i}^{2}}{\rho^{2}}\leq
   \frac{1}{\alpha^{2}}\left(C \epsilon_0^{-1}(\delta^{2}u_{ii}^{2}+b^{2})
     +(1+ \epsilon_0)\frac{u_{11i}^{2}}{u_{11}^{2}}\right).
\end{equation}
By \eqref{interior-9}, \eqref{add-2}, \eqref{constant} and \eqref{interior-2}, we have
\begin{equation}
\label{interior-11}
\begin{aligned}
 0 \geq \,& (\delta-C\epsilon_0^{-1}\delta^{2}) \sigma_k^{ii} u_{ii}^2 + (b-C\epsilon_0^{-1}b^{2}) \sum \sigma_k^{ii}
             - \alpha \frac{kf}{\rho}- C f^{1- 1/(k-1)}\\
         \,& +\frac{2}{u_{11}}\sum_{i \geq 2} \sigma_{k-2; i1} u_{11i}^2
             -(1+\frac{1+\epsilon_0}{\alpha})\sum_{i=2}^{n}\sigma_{k}^{ii}\frac{u_{11i}^2}{u_{11}^{2}}  \\
         \,& -\alpha \sigma_{k}^{11}\frac{\rho_{1}^{2}}{\rho^{2}}-\sigma_{k}^{11}\Big(\frac{\alpha\rho_{1}}{\rho}+\delta u_{1}u_{11}+bx_{1}\Big)^{2}.
\end{aligned}
\end{equation}
Now we fix the constants $\epsilon_0 \in (0,1)$
and $\varepsilon$ to be the constant in Lemma \ref{lem-3} for $\overline{\delta} = \frac{\alpha - 1 - \epsilon_0}{2\alpha}$.
Applying Lemma \ref{lem-3} to $\sigma_{k-1; i}$, we get
$$
  \frac{2}{u_{11}}\sum_{i=2}^{n} \sigma_{k-2; i1} u_{11i}^2
              -\frac{\alpha+1+\epsilon_0}{\alpha}\sum_{i=2}^{n}\sigma_{k}^{ii}\frac{u_{11i}^2}{u_{11}^{2}} \geq 0.
$$
We fix $b$ and $\delta$ sufficiently small such that $b_0 := b - C \epsilon_0^{-1}b^2 > 0$ and $\delta_{0} :=\delta-C\epsilon_0^{-1}\delta^{2}>0$ in \eqref{interior-11}. Thus, by \eqref{GC2-9} and \eqref{interior-11}, we have
\begin{equation}
\label{interior-12}
 0 \geq (b_{0} u_{11}^{1/(k-1)}-  \frac{C}{\rho}
             - C )f^{1- 1/(k-1)}
              +\sigma_{k}^{11}\Big(\delta_{0}u_{11}^{2}-\frac{C}{\rho^{2}}-C\Big)
\end{equation}
and \eqref{interior-1} is proved.

\end{proof}

%\begin{remark}
%  If we use the stronger assumption $f>0$ in $\Omega$ or \eqref{strong} instead of $f^{1/(k-1)}\in C^{1,1}(\ol\Omega)$, we will derive a better estimate
%$$
%(w - u)^{\alpha} |D^2 u| \leq C
%$$
%for all $\alpha>1$. See \cite{CW01}.
%\end{remark}

\begin{remark}
	\label{r2}
If $\Omega$ is
uniformly $(k-1)$-convex with $\partial \Omega \in C^{3, 1}$ and $\varphi \in C^{3,1} (\partial \Omega)$,
there exists a unique $k$-convex solution $\ol u \in C^{1,1} (\ol \Omega)$ of the degenerate $k$-Hessian equation
\begin{equation}\label{w}
	\left\{ \begin{aligned}
		\sigma_k \big(\lambda(D^{2} \ol u)\big) & = 0  \;\;\mbox{ in }~ \Omega, \\
		\ol u &= \varphi  \;\;\mbox{ on }~ \partial \Omega.
	\end{aligned} \right.
\end{equation}
The reader is referred to \cite{ITW2004} or \cite{K94a, K94b, K95a, K95b} for the existence theorem.
By the maximum principle, the solution of \eqref{w} satisfies the conditions of Theorem \ref{interior_h},
which implies that when $\Omega$ is uniformly
$(k-1)$-convex, $\ol u$ always exists,  the Pogorelov estimate
in Theorem \ref{interior_h} always holds in this case.
\end{remark}

\section{Pogorelov estimates for curvature equation}

In this section we establish the Pogorelov estimates for prescribed $k$-curvature equations, i.e.,
Theorem \ref{interior}.

\begin{proof}[Proof of Theorem \ref{interior}]
%First, we may assume that $u\in C^{4}(\ol \Omega)$ by replacing $u$ by $u+\varepsilon$
%and $\Omega$ by $\{x\in \Omega: u(x)+\varepsilon <\varphi(x)\}$ for small enough $\varepsilon>0$.

 Let $ v := 1/w = \langle \nu,\epsilon_{n+1} \rangle$. There exists a positive constant $a$ depending only on
$\|Du\|_{C^{0}(\ol \Omega)}$ such that $v\geq 2a$. We consider the test function
$$
W=\rho^{\alpha}(v-a)^{-1}\exp\{\frac{b}{2}|X|^{2}\}h_{\xi\xi},
$$
where $\rho=\ol u-u$, $X=(X_{1}, \ldots, X_{n+1})\in M_u$, $\xi \in T_X M_u$ is a unit vector and $b$ is a positive constant to be determined.
Suppose that the maximum value of $W$ is achieved at a point $X_{0} = (x_{0}, u(x_{0}))\in M$, $x_{0}\in\Omega$
and $\xi_0 \in T_{X_0} M_u$.
Let $\{e_{1}, e_{2}, \ldots, e_{n}\}$
be the normal coordinates with respect to $X_{0}$. We may assume $\xi_0 = e_1$ and
$\{h_{ij}\}$ is diagonal at $X_{0}$ with $h_{11} \geq \cdots \geq h_{nn}$ by a rotation if necessary.
Let $\kappa_i = h_{ii} (X_0)$ for $i = 1, \ldots, n$.
Therefore, at $X_{0}$ , taking the covariant derivatives twice with respect
to
\[
\alpha \log \rho - \log (v-a) + \frac{b}{2}|X|^2 + \log h_{11},
\]
%Let
%$$W=\rho^{\alpha}(v-a)^{-1}\exp\{\frac{b}{2}|X|^{2}\}h_{11}.$$
we have
\begin{equation}\label{p1}
  \alpha\frac{\nabla_{i}\rho}{\rho}- \frac{\nabla_{i}v}{v-a}+b\langle X, e_{i} \rangle+\frac{\nabla_{i}h_{11}}{h_{11}}=0
\end{equation}
for $i=1, \ldots, n$, and
\begin{equation}\label{p2}
\begin{aligned}
  0\geq \,& \sigma_{k}^{ii}\Big\{\alpha\frac{\nabla_{ii}\rho}{\rho}-\alpha\frac{(\nabla_{i}\rho)^{2}}{\rho^{2}}
            - \frac{\nabla_{ii}v}{v-a}+ \frac{(\nabla_{i}v)^{2}}{(v-a)^{2}}\\
        \,& +b(1+\kappa_{i}\langle X, \nu \rangle)+\frac{\nabla_{ii}h_{11}}{h_{11}}-\frac{(\nabla_{i}h_{11})^{2}}{h_{11}^{2}}\Big\}.
\end{aligned}
\end{equation}
%where
%\[
%\sigma_k^{ij} := \frac{\partial \sigma_k (\lambda (h))}{\partial h_{ij}}.
%\]
Since $\ol u$ is $(k+1)$-convex, by Lemma \ref{lem-4}, we have
\[
\sigma_k^{ii} \nabla_{ii} \ol u = \sigma_k^{ii} (\nabla_{ii} \ol u - \nu (\ol u) h_{ii}) + \nu (\ol u) \sigma_k^{ii} h_{ii}
  \geq k \nu (\ol u) f.
\]
Therefore, by the Gauss formula, we have
\begin{equation}\label{m1}
\begin{aligned}
  \sigma_{k}^{ii} \nabla_{ii} \rho
        = \,&\sigma_{k}^{ii}\nabla_{ii}(\ol u-X_{n+1})\\
        \geq \,& k \nu (\ol u) f - v \sigma_{k}^{ii}h_{ii} \geq -Cf.
\end{aligned}
\end{equation}
Differentiating the equation \eqref{1-1} twice we get
\begin{equation}
\label{d1}
|\sigma_k^{ii}  \nabla_{l} h_{ii}| = |\nabla_{l}f| \leq C f^{1 - 1/(k-1)}, \mbox{ for each } l = 1, \ldots, n
\end{equation}
and
\begin{equation}
\label{d2}
\sigma_k^{ii} \nabla_{11} h_{ii} = \nabla_{11}f - \sigma_k^{ij, pq} \nabla_{1}h_{ij}\nabla_{1} h_{pq}.
\end{equation}
%where
%\[
%\sigma_k^{ij, pq} := \frac{\partial^2 \sigma_k (\lambda (h))}{\partial h_{ij} \partial h_{pq}}.
%\]
Since $f^{1/(k-1)} \in C^{1,1} (\ol \Omega \times \mathbb{R})$, we have
\begin{equation}
\label{add-4}
\begin{aligned}
\nabla_{11}f = \,& \sum_{s,t=1}^{n+1} \frac{\partial^2 f}{\partial X_s \partial X_t} \nabla_1 X_s \nabla_1 X_t
     + h_{11} \nu (f)\\
      \geq \,& \left(1 - \frac{1}{k-1}\right) \frac{|\nabla_1 f|^2}{f} - C h_{11} f^{1 - 1/(k-1)}
\end{aligned}
\end{equation}
provided $h_{11}$ is sufficiently large.
By the Weingarten equation, we have
%\begin{equation}
%\label{m3}
%\nabla_i v = - h_{im} \langle e_m, \epsilon_{n+1}\rangle
%   = - h_{im} \nabla_m u \text{ and }
%\sigma_k^{ii} (\nabla_i v)^2 \leq C \sigma_{k-1; i} \kappa_i^2,
%\end{equation}
\begin{equation}
\label{m3}
\nabla_i v = - h_{im} \langle e_m, \epsilon_{n+1}\rangle = - h_{im} \nabla_m u
   \text{ and }
\sigma_k^{ii} (\nabla_i v)^2 \leq  \sigma_{k-1; i} \kappa_i^2.
\end{equation}
Next, by Gauss formula, Codazzi euqation and \eqref{d1}, we have
\begin{equation}
\label{m4}
\begin{aligned}
\sigma_k^{ii} \nabla_{ii} v = \,& - \sigma_k^{ii} \nabla_m h_{ii} \nabla_m u - \sigma_k^{ii} \kappa_{i}^2 v\\
  = \,& - v \sigma_{k-1; i} \kappa_i^2 - \langle\nabla f, \nabla u\rangle
  \leq - v \sigma_{k-1; i} \kappa_i^2 + C f^{1-1/(k-1)},
\end{aligned}
\end{equation}
where the last inequality comes from $f^{1/(k-1)} \in C^1 (\ol \Omega \times \mathbb{R})$.
By \eqref{Commutation formula}, \eqref{d2} and \eqref{add-4}, we see
\begin{equation}
\label{m71}
\begin{aligned}
\sigma_k^{ii} \nabla_{ii} h_{11} = \,&  \sigma_k^{ii} \nabla_{11} h_{ii} - h_{11} \sigma_{k-1; i} \kappa_i^2 + k f  h_{11}^{2} \\
  \geq \,& \nabla_{11}f -  \sigma_k^{ij, pq} \nabla_1 h_{ij} \nabla_1 h_{pq} - h_{11} \sigma_{k-1; i} \kappa_i^2 + k f  h_{11}^{2}\\
  \geq \,& - C h_{11} f^{1 - 1/(k-1)} -  \sigma_k^{ij, pq} \nabla_1 h_{ij} \nabla_1 h_{pq} - h_{11} \sigma_{k-1; i} \kappa_i^2 + k f  h_{11}^{2}.
\end{aligned}
\end{equation}
Since $\{h_{ij}(X_{0})\}$ is diagonal, as \eqref{add-5} we have, at $X_{0}$,
\[
\sigma_k^{ij, pq} = \left\{ \begin{aligned}
   \sigma_{k - 2; ip} (\kappa) & \;\;\mbox{ if }~ i=j, p=q, i \neq q, \\
   - \sigma_{k - 2; ip} (\kappa) & \;\;\mbox{ if }~ i=q, j=p, i \neq j, \\
   0 & \;\;\mbox{ otherwise.}
\end{aligned} \right.
\]
By the concavity of $\sigma_k^{1/k}$ in $\Gamma_k$, we have
\[
\sum_{i\neq j} \sigma_{k-2; ij} \nabla_1 h_{ii} \nabla_1 h_{jj}
  \leq \left(1- \frac{1}{k}\right) \frac{1}{f} (\sigma_{k-1;i} \nabla_1 h_{ii})^2 = \left(1-\frac{1}{k}\right) \frac{(\nabla_{1}f)^2}{f}.
\]
Thus, as \eqref{interior-6}, by Codazzi equation, we have
\begin{equation}
\label{m72}
\begin{aligned}
- \sigma_k^{ij, pq} \nabla_1 h_{ij} \nabla_1 h_{pq}
  = \,& \sum_{i \neq j} \sigma_{k-2; ij} (\nabla_1 h_{ij})^2 - \sum_{i\neq j} \sigma_{k-2; ij} \nabla_1 h_{ii} \nabla_1 h_{jj}\\
  \geq \,& \sum_{i \geq 2} 2\sigma_{k-2; i1} (\nabla_i h_{11})^2 - C \frac{(\nabla_{1}f)^2}{f}.
\end{aligned}
\end{equation}
Applying Lemma \ref{Blocki} to $\psi = f^{1/(k-1)}$, we get
\begin{equation}
\label{m73}
\frac{(\nabla_{1}f)^2}{f} \leq \frac{C}{d^2} f^{1- 1/(k-1)},
\end{equation}
where $d := \mathrm{dist} (x, \partial \Omega)$ and the constant $C$ depends only on
$\|f^{1/(k-1)}\|_{C^{1,1} (\ol \Omega \times [-\mu_0-1, \mu_0+1])}$, where $\mu_0 := \sup_{\ol \Omega} |u|$.
Similar to \eqref{super-1} we have
\begin{equation}
\label{add-6}
0 \leq \ol u - u \leq B d \mbox{ in } \Omega
\end{equation}
for some constant $B$ depending only on $\|\ol u - u\|_{C^1 (\ol \Omega)}$
and we may assume $h_{11} d^2 \geq 1$ for otherwise we are done. Combining \eqref{m71}-\eqref{add-6}, we have
\begin{equation}\label{m7}
  \frac{1}{h_{11}}\sigma_{k}^{ii}\nabla_{ii}h_{11}\geq \frac{2}{h_{11}}\sum_{i\geq 2}\sigma_{k-2;i1}(\nabla_i h_{11})^{2}-Cf^{1- 1/(k-1)} -\sigma_{k-1;i}\kappa_{i}^{2}+k f  h_{11}.
\end{equation}
Combining \eqref{p2}, \eqref{m1}, \eqref{m4} and \eqref{m7}, we obtain
\begin{equation}\label{main}
  \begin{aligned}
  0\geq \,& -C\frac{\alpha f}{\rho}
            -\alpha\sigma_k^{ii}\frac{(\nabla_{i}\rho)^{2}}{\rho^{2}}
            - (-\frac{v}{v-a}\sigma_k^{ii}\kappa_{i}^{2}+Cf^{1-1/(k-1)})
            + \sigma_k^{ii}\frac{(\nabla_{i}v)^{2}}{(v-a)^{2}}\\
        \,& +b\sum\sigma_{k}^{ii}
            -C b f
            +\frac{2}{h_{11}}\sum_{i\geq 2}\sigma_{k-2;i1} (\nabla_i h_{11})^{2}-Cf^{1- 1/(k-1)}-\sigma_{k}^{ii}\kappa_{i}^{2}\\
        \,& +k f  h_{11}-\sigma_{k}^{ii}\frac{(\nabla_{i}h_{11})^{2}}{h_{11}^{2}}\\
   \geq \,& \left(k h_{11} - \frac{C \alpha}{\rho} - Cb\right) f
            -\alpha\sigma_{k}^{ii}\frac{(\nabla_{i}\rho)^{2}}{\rho^{2}}
            +\frac{a}{v-a}\sigma_{k}^{ii}\kappa_{i}^{2}-Cf^{1-1/(k-1)}\\
          \,& + b\sum\sigma_{k}^{ii}
         +\frac{2}{h_{11}}\sum_{i\geq 2}\sigma_{k-2;i1}(\nabla_i h_{11})^{2}
            +\sigma_{k}^{ii}\frac{(\nabla_{i}v)^{2}}{(v-a)^{2}}
            -\sigma_{k}^{ii}\frac{(\nabla_{i}h_{11})^{2}}{h_{11}^{2}}\\
      \geq \,& -\alpha\sigma_{k}^{ii}\frac{(\nabla_{i}\rho)^{2}}{\rho^{2}}
            +\frac{a}{v-a}\sigma_{k}^{ii}\kappa_{i}^{2}-Cf^{1-1/(k-1)}\\
          \,& + b\sum\sigma_{k}^{ii}
         +\frac{2}{h_{11}}\sum_{i\geq 2}\sigma_{k-2;i1}(\nabla_i h_{11})^{2}
            +\sigma_{k}^{ii}\frac{(\nabla_{i}v)^{2}}{(v-a)^{2}}
            -\sigma_{k}^{ii}\frac{(\nabla_{i}h_{11})^{2}}{h_{11}^{2}}.
\end{aligned}
\end{equation}

As in Section 3, we consider two cases.
\\ \hspace*{\fill} \\
\noindent
\textbf{Case 1.} $h_{kk}\geq \varepsilon h_{11}$  for some $\varepsilon>0$ to be chosen.
It follows that
\begin{equation}\label{case-1}
  \sum \sigma_{k}^{ii}h_{ii}^{2}>\sigma_{k}^{kk}h_{kk}^{2}\geq \theta h_{11}^{2}\sum \sigma_{k}^{ii},
\end{equation}
where $\theta=\theta(n,k,\varepsilon)$.

By \eqref{p1} and Cauchy-Schwarz inequality, we have, for any $\epsilon_0 > 0$,
\begin{equation}\label{m8}
\begin{aligned}
  \frac{(\nabla_{i}h_{11})^{2}}{h_{11}^{2}}
      \,& = \Big(\alpha\frac{\nabla_{i}\rho}{\rho}+ \frac{\nabla_{i}v}{v-a}+b\langle X, e_{i} \rangle\Big)^{2}\\
      \,&\leq (1+\epsilon_0)\frac{(\nabla_{i}v)^{2}}{(v-a)^{2}}
         +   C\left(1+\frac{1}{\epsilon_0}\right)\left(\frac{(\nabla_{i}\rho)^{2}}{\rho^{2}}+b^{2}\right).
\end{aligned}
\end{equation}
By \eqref{m3}, \eqref{main}, \eqref{case-1} and \eqref{m8}, we find
\begin{equation}\label{main1}
  \begin{aligned}
  0  \geq \,& -\frac{C}{\epsilon_0\rho^{2}}\sum\sigma_{k}^{ii}
             +\left(\frac{a}{v-a}-\frac{\epsilon_0}{(v-a)^{2}}\right)\sigma_{k-1;i}\kappa_{i}^{2}\\
          \,& +(b-Cb^{2}/\epsilon_0)\sum\sigma_{k}^{ii}+\frac{2}{h_{11}}\sum_{i\geq 2}\sigma_{k-2;i1}(\nabla_i h_{11})^{2}-Cf^{1-1/(k-1)}.
\end{aligned}
\end{equation}
Now we fix $\epsilon_0$ sufficiently small such that
\[
\frac{a}{v-a}-\frac{\epsilon_0}{(v-a)^{2}} \geq \delta_0 > 0
\]
for some positive constant $\delta_0$. We see $b_0 := b-Cb^{2}/\epsilon_0 > 0$ if $b$ is small enough.
Thus, \eqref{main1} becomes
\begin{equation}\label{add-7}
  0 \geq \delta_0 \theta h_{11}^{2}\sum\sigma_{k}^{ii}
             -\frac{C}{\rho^{2}}\sum\sigma_{k}^{ii}
          +b_{0}\sum\sigma_{k}^{ii}-Cf^{1-1/(k-1)}.
\end{equation}
Using \eqref{nm} as \eqref{GC2-9}, we have
\begin{equation}
\label{m5}
\sum \sigma_{k}^{ii} = (n - k + 1) \sigma_{k-1} (\kappa)
                     \geq c_{0}h_{11}^{1/(k-1)}f^{1-1/(k-1)}.
\end{equation}
Then we get $\rho h_{11} \leq C$ from \eqref{add-7}.
\\ \hspace*{\fill} \\
\noindent
\textbf{Case 2.} $h_{kk}< \varepsilon h_{11}$.
Therefore we have $|h_{jj}|\leq C\varepsilon h_{11}$ for $j=k, \ldots, n$. By Lemma \ref{lem-3}, we fix
$\varepsilon$ such that
\begin{equation}\label{case-2}
  \frac{2}{h_{11}}\sum_{i\geq 2}\sigma_{k-2;i1}(\nabla_i h_{11})^{2}
             -(1+\frac{8}{3\alpha})\sum_{i\geq 2}\sigma_{k-1;i}\frac{(\nabla_{i}h_{11})^{2}}{h_{11}^{2}}\geq 0.
\end{equation}
By \eqref{p1} and Cauchy-Schwarz inequality, we see
\begin{equation}\label{m2}
\begin{aligned}
  \alpha^{2}\frac{(\nabla_{i}\rho)^{2}}{\rho^{2}}
           \,& =   \Big\{ \frac{\nabla_{i}v}{v-a}
               +   b\langle X, e_{i} \rangle+\frac{\nabla_{i}h_{11}}{h_{11}} \Big\}^{2}\\
           \,&\leq \frac{8}{3}\frac{(\nabla_{i}v)^{2}}{(v-a)^{2}}+Cb^{2}+ \frac{8}{3}\frac{(\nabla_{i}h_{11})^{2}}{h_{11}^{2}}.
\end{aligned}
\end{equation}
Using \eqref{p1}, \eqref{main}, \eqref{m8}, \eqref{case-2}, \eqref{m2}, \eqref{m5} and that $\alpha = \max\{3, k-1\}>\frac{8}{3}$, fixing $\epsilon_{0}$ sufficiently small, we have
\begin{equation}\label{main2}
  \begin{aligned}
  0 \geq  \,& \sum_{i\geq 2}\frac{a}{v-a}\sigma_{k}^{ii}\kappa_{i}^{2}+ (b-Cb^2)\sum\sigma_{k}^{ii}-Cf^{1-1/(k-1)}\\
         \,& +\frac{2}{h_{11}}\sum_{i\geq 2}\sigma_{k-2;i1}h_{i11}^{2}
             -(1+\frac{8}{3\alpha})\sum_{i\geq 2}\sigma_{k-1;i}\frac{(\nabla_{i}h_{11})^{2}}{h_{11}^{2}}\\
         \,& +\sigma_{k}^{11}(\frac{v}{v-a}\kappa_{1}^{2}-\epsilon_{0}C\kappa_{1}^{2}-\frac{C}{\rho^{2}}-C)\\
   \geq  \,& (b_{0}c_0 h_{11}^{1/(k-1)}-\frac{C}{\rho}-C)f^{1-1/(k-1)}
\end{aligned}
\end{equation}
provided $h_{11} \rho$ is sufficiently large, where $b_0 := b - Cb^2 > 0$ by fixing $b$ sufficiently small.
Thus, a bound $\rho^{\alpha}h_{11}\leq C$ follows by \eqref{main2}. Theorem \ref{interior} is proved.
\end{proof}

\begin{remark}
The exponent $\alpha$ can be improved to $\max \{\alpha_{0}, k-1\}$, where $\alpha_{0}$ is any constant strictly larger than 2.
In fact, \eqref{m2} can be improved as
$$
\begin{aligned}
  \alpha^{2}\frac{(\nabla_{i}\rho)^{2}}{\rho^{2}}
           \,& =   \Big\{ \frac{\nabla_{i}v}{v-a}
               +   b\langle X, e_{i} \rangle+\frac{\nabla_{i}h_{11}}{h_{11}} \Big\}^{2}\\
           \,&\leq 2(1+\epsilon)\frac{(\nabla_{i}v)^{2}}{(v-a)^{2}}+C(1+\frac{1}{\epsilon})b^{2}+ 2(1+\epsilon)\frac{(\nabla_{i}h_{11})^{2}}{h_{11}^{2}}
\end{aligned}
$$
for any $\epsilon > 0$ and the same conclusion follows with the constant $\alpha$ replaced by $\max \{\alpha_{0}, k-1\}$.
\end{remark}

\section{The Dirichlet problems}

In this section, we consider the existence of solutions to the Dirichlet problems \eqref{1-1} and \eqref{1-1h}.
In particular, we shall prove Theorem \ref{cor2}, \ref{cor1} and \ref{hessian}.
%First we note that the \emph{a priori} $C^1$
%estimates up to the boundary for the prescribed $k$-curvature equations \eqref{1-1} and the $k$-Hessian \eqref{1-1h} are established
%in Section 5 of \cite{JW21} and Section 2 of \cite{JW22} respectively.

\begin{proof}[Proof of Theorem \ref{cor2}]
We consider the approximate problem as in \cite{JW21}. Note that there exists a positive constant $\theta_0$ satisfying
\[
\sigma_k (\kappa [M_{\ul u}]) \geq \theta_0^{k-1} \mbox{ on } \ol \Omega
\]
since $\kappa [M_{\ul u}] \in \Gamma_k$. Let $\eta$ be a cut-off function defined on $[0, \infty)$, satisfying $0 \leq \eta \leq 1$, $|\eta'| \leq C \theta_0^{-1}$, $|\eta''| \leq C \theta_0^{-2}$ and
\[
\eta (t) = \left\{ \begin{aligned}
   & 1  \;\;\mbox{ if }~ 0 \leq t \leq \frac{\theta_0}{4}, \\
                 & 0  \;\;\mbox{ if }~ \frac{\theta_0}{2} \leq t < \infty.
\end{aligned} \right.
\]
Consider the approximate problem
\begin{equation}
\label{final}
\left\{ \begin{aligned}
   \sigma_k \big(\kappa[M_u]\big) & = f_\epsilon (x, u) := [\tilde{f} (x, u) + \epsilon \eta (\tilde{f} (x, u))]^{k-1}  \;\;\mbox{ in }~ \Omega, \\
                 u &= \varphi  \;\;\mbox{ on }~ \partial \Omega,
\end{aligned} \right.
\end{equation}
where $\tilde{f}=f^{1/(k-1)}$. When $\epsilon$ is sufficiently small, by \eqref{subsol}, $\ul u$ is also a sub-solution of \eqref{final} and $(f_\epsilon)_u \geq 0$. By using the existence theorems in \cite{I91},  there exists a unique admissible solution $u_{\epsilon}$ of \eqref{final}.
%\begin{equation}
%\label{final}
%\left\{ \begin{aligned}
%   \sigma_k \big(\kappa[M_u]\big) & = f_\epsilon (x, u) \;\;\mbox{ in }~ \Omega_j, \\
%                 u &= \varphi  \;\;\mbox{ on }~ \partial \Omega_j,
%\end{aligned} \right.
%\end{equation}
By the maximum principle and that $\varphi$ is affine, we see
\[
\ul u \leq u_\epsilon \leq \varphi,
\]
which provides a uniform $C^0$ bound of $u_\epsilon$.
By \eqref{nm} with $l=s=0$ and $r=1$, we find
\[
H (M_u) = \sigma_1 (\kappa [M_u]) \geq c_0 f^{1/k}
\]
for some positive constant $c_0$ depending only on $n$ and $k$. We may assume $f \not \equiv 0$ for otherwise $u = \varphi$ is the solution of \eqref{1-1}.
Therefore, there exists a point $x_0 \in \Omega$ and a positive constant $\delta_0$ such that $f (x_0, u(x_0)) \geq \delta_0 > 0$.
We consider a neighbourhood $B_{\delta_1} (x_0)$ of $x_0$ such that $B_{\delta_1} (x_0) \subset \subset \Omega$.
By Theorem 5.4 of \cite{JW21}, we have
\[
\sup_{B_{\delta_1} (x_0)}|D u_\epsilon| \leq C,
\]
where the positive constant $C$ may depend on the distance from $\partial B_{\delta_1} (x_0)$ to $\Omega$.
Therefore, we can find a smaller neighbourhood $B_{\delta_2} (x_0)$ of $x_0$ such that
\[
\inf_{B_{\delta_2} (x_0)} f \geq \frac{\delta_0}{2}.
\]
Let $\overline{f}$ be a smooth function such that $\overline{f} = c_0 (\delta_0/2)^{1/k}$ in $B_{\delta_2/2} (x_0)$,
$\overline{f} \leq c_0 (\delta_0/2)^{1/k}$ in $B_{\delta_2} (x_0) - B_{\delta_2/2} (x_0)$ and $\overline{f} = 0$
outside $B_{\delta_2} (x_0)$.
Now by Theorem 16.10 of \cite{GT}, there exists a unique solution $v$ of the prescribed mean curvature equation
\[
\sigma_1 (\kappa[M_{v}]) = \epsilon_2 \overline{f} \mbox{ in } \Omega
\]
with $v = \varphi$ on $\partial \Omega$,  if the positive constant $\epsilon_2<1$ is chosen sufficiently small.
Note that $\sigma_1 (\kappa [M_\varphi]) = 0$ since $\varphi$ is affine. By the strong maximum principle, we see
\[
u_\epsilon - \varphi < v - \varphi < 0 \mbox{ in } \Omega
\]
when $\epsilon$ is small enough. Furthermore, for any $\Omega' \subset \subset \Omega$, there exists a positive constant $c_{\Omega'}$
depending only on $\Omega'$, $v$ and $\varphi$ such that
\[
u_\epsilon - \varphi < v - \varphi \leq - c_{\Omega'} \mbox{ on } \ol{\Omega'}.
\]
By Theorem 5.4 of \cite{JW21} again, we see
\[
\sup_{\ol {\Omega}'}|D u_\epsilon| \leq C_{\Omega'}.
\]
By the Pogorelov estimate \eqref{curvature} with $\ol u$ replaced by $\varphi$, letting $\epsilon \rightarrow 0$ (passing to a subsequence if necessary), we obtain a solution $u \in C^{1,1} (\Omega) \cap C^{0} (\ol \Omega)$. The solution is unique by
the maximum principle.
\end{proof}

\begin{proof}[Proof of Theorem \ref{cor1}]
Let $\{\Omega_j\}$ be a sequence of
uniformly $(k-1)$-convex domains with $\partial \Omega_j \in C^{1,1}$ such that $\Omega_j \uparrow \Omega$
as $j \uparrow \infty$. By \cite{CNS} (seeing \cite{T95} also), there exists a unique $k$-convex solution $u_j \in C^{3,1} (\ol{\Omega}_j)$
of the approximate equation
\begin{equation}
\label{1-1'}
\left\{ \begin{aligned}
   \sigma_k \big(\lambda(D^{2} u)\big) & = f+ 1/j  \;\;\mbox{ in }~ \Omega_j, \\
                 u &= \varphi  \;\;\mbox{ on }~ \partial \Omega_j
\end{aligned} \right.
\end{equation}
for each $j = 1, 2, \cdots$.
It is easy to construct a sub-solution $\ul u \in C^{1,1} (\ol \Omega)$ of \eqref{1-1h} by the uniform
$(k-1)$-convexity of $\Omega$ as in \cite{CNS} satisfying
\[
\left\{ \begin{aligned}
   \sigma_k (\lambda (D^2 \ul u)) & \geq f (x, \ul u) + \delta \;\;\mbox{ in }~ \Omega, \\
                 \ul u &= \varphi  \;\;\mbox{ on }~ \partial \Omega
\end{aligned} \right.
\]
for some $\delta > 0$ since $\ul u \leq \varphi$ and $f_u \geq 0$. By the maximum principle, we have
\begin{equation}
\label{add-8}
\ul u \leq u_j \leq \varphi \mbox{ in } \Omega_j \mbox{ for each } j = 1, 2, \cdots.
\end{equation}
As in the proof of Theorem \ref{cor2}, we consider any domain $\Omega' \subset \subset \Omega$.
By \eqref{add-8} and the interior gradient estimates established in Theorem 3.2 of \cite{CW01}, we see
\[
\|u_j\|_{C^1 (\ol{\Omega}')} \leq C_{\Omega'}, \mbox{ for } j \mbox{ sufficiently large},
\]
where the constant $C_{\Omega'}$ depends only on $n$, $k$ and $\mathrm{dist} (\Omega', \partial \Omega)$
but independent of $j$.
By the same arguments as in the proof of Theorem \ref{cor2}, we can also obtain a bound
\[
\|u_j\|_{C^2 (\ol{\Omega}')} \leq C_{\Omega'}, \mbox{ for } j \mbox{ sufficiently large}
\]
by \eqref{interior-1} with the function $\ol u$ replaced by $\varphi$. Letting $j \rightarrow \infty$ (passing to a
subsequence if necessary), we obtain a solution $u \in C^{1,1} (\Omega)$ of \eqref{1-1h}. By \eqref{add-8} again, we see
$u \in C^{1,1} (\Omega) \cap C^0 (\ol \Omega)$ and
\[
\lim_{x \rightarrow \partial \Omega} u = \varphi.
\]
Theorem \ref{cor1} is proved.
\end{proof}

Now we turn to consider the $k$-Hessian equation \eqref{1-1h} for general $\varphi$.
Since $f^{1/(k-1)} \in C^{1,1} (\ol \Omega)$ and $\Omega$ is uniformly $(k-1)$-convex, there exists a unique solution
$u_\epsilon \in C^{3,1} (\ol \Omega)$ of the approximate problem
\[
\left\{ \begin{aligned}
   \sigma_k (\lambda (D^2 u)) & \geq f (x) + \epsilon \;\;\mbox{ in }~ \Omega, \\
                 u &= \varphi  \;\;\mbox{ on }~ \partial \Omega.
\end{aligned} \right.
\]
By the $C^1$ estimates established in Section 2 of \cite{JW22}, we can obtain a viscosity solution $u \in C^{0,1} (\ol \Omega)$
for \eqref{1-1h} by letting $\epsilon \rightarrow 0$. Let $\ol u \in C^{1,1} (\ol \Omega)$ be the solution of
\[
\left\{ \begin{aligned}
   \sigma_k (\lambda (D^2 u)) & = 0 \;\;\mbox{ in }~ \Omega, \\
                 u &= \varphi  \;\;\mbox{ on }~ \partial \Omega.
\end{aligned} \right.
\]
\begin{lemma}
\label{add-9}
If $\{u < \ol u\} \neq \emptyset$,
we have $u \in C^{1,1} (\{u < \ol u\})$.
\end{lemma}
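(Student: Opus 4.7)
The plan is to apply Theorem \ref{interior_h} to a non-degenerate approximation of $u$ with $\ol u$ as the upper barrier, and then pass to the limit on compact subsets of $\{u<\ol u\}$. For $\delta>0$ set $\tilde f_\delta:=\tilde f+\delta$ (with $\tilde f:=f^{1/(k-1)}$) and consider the Dirichlet problem
\begin{equation*}
\sigma_k(\lambda(D^2u_\delta))=\tilde f_\delta^{k-1}\quad\text{in }\Omega,\qquad u_\delta=\varphi\quad\text{on }\partial\Omega,
\end{equation*}
which, since $\tilde f_\delta^{k-1}\ge\delta^{k-1}>0$ is $C^{1,1}$ and $\Omega$ is uniformly $(k-1)$-convex, admits a unique $k$-convex solution $u_\delta\in C^{3,1}(\ol\Omega)$ by the classical theory already invoked. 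The key virtue of this particular regularisation is that $(\tilde f_\delta^{k-1})^{1/(k-1)}=\tilde f+\delta$ has $C^{1,1}(\ol\Omega)$ norm bounded uniformly in $\delta\in(0,1]$, so the constant in Pogorelov's estimate will not blow up.

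Since $\sigma_k(\lambda(D^2u_\delta))=\tilde f_\delta^{k-1}>0=\sigma_k(\lambda(D^2\ol u))$ with $u_\delta=\ol u$ on $\partial\Omega$, the comparison principle gives $u_\delta\le\ol u$ in $\Omega$, and Theorem \ref{interior_h} applies to yield
\begin{equation*}
(\ol u-u_\delta)^\alpha\,|D^2u_\delta|\le C \quad\text{in }\Omega,
\end{equation*}
with $C$ independent of $\delta$: uniform $C^1$ control on $u_\delta$ comes from the sandwich $\ul u\le u_\delta\le\ol u$ together with the interior gradient estimate of \cite{CW01}; $\|\ol u\|_{C^{1,1}}$ is fixed; and $\|\tilde f_\delta\|_{C^{1,1}}$ is uniform by construction. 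By uniqueness of the viscosity solution to \eqref{1-1h} and stability under $\tilde f_\delta^{k-1}\to f$ uniformly, $u_\delta\to u$ locally uniformly as $\delta\to 0$.

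Now fix $K\subset\subset\{u<\ol u\}$, so $\ol u-u\ge c_0>0$ on $K$ for some $c_0>0$. Then $\ol u-u_\delta\ge c_0/2$ on $K$ for $\delta$ small enough, and the preceding estimate gives $|D^2u_\delta|\le C(2/c_0)^\alpha$ uniformly on $K$. A standard Arzel\`a--Ascoli argument on $\{Du_\delta\}$ then yields $u\in C^{1,1}(K)$, and since $K\subset\subset\{u<\ol u\}$ was arbitrary, $u\in C^{1,1}(\{u<\ol u\})$.

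The principal technical obstacle is the uniformity of the constant in the Pogorelov estimate, which depends on $\|f^{1/(k-1)}\|_{C^{1,1}}$ of the right-hand side. The naive choice $f+\delta$ is problematic because for $k>2$ the $C^{1,1}$ norm of $(f+\delta)^{1/(k-1)}$ is not obviously uniform as $\delta\downarrow 0$ near $\{f=0\}$; taking the $(k-1)$-th power of $\tilde f+\delta$ sidesteps this issue cleanly (one can also verify uniformity directly via Lemma \ref{Blocki} applied to $\tilde f$, which gives $|D\tilde f|^2\le C\tilde f$, but this requires more bookkeeping). A secondary issue is that Theorem \ref{interior_h} is stated for $C^4$ solutions while $u_\delta\in C^{3,1}$; this is easily handled by a further smooth approximation of $\tilde f_\delta^{k-1}$ that does not disturb the uniform estimates.
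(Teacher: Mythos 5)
Your proof is correct and follows the same overall strategy as the paper: approximate the degenerate problem by a family of non-degenerate problems, apply Theorem~\ref{interior_h} on the approximants with $\ol u$ as barrier, note that $\ol u - u_\delta$ stays bounded below on any $K \subset\subset \{u < \ol u\}$, and pass to the limit. The one genuine difference is the choice of regularization. The paper takes $f_\epsilon = f + \epsilon$ (the family already introduced just before the lemma, whose limit defines $u$); you take $f_\delta = (\tilde f + \delta)^{k-1}$. Your choice has the advantage that $f_\delta^{1/(k-1)} = \tilde f + \delta$ has the same $C^{1,1}$ norm as $\tilde f$ for every $\delta$, so the statement of Theorem~\ref{interior_h} literally yields a $\delta$-independent constant. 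With the paper's choice, $\|(f+\epsilon)^{1/(k-1)}\|_{C^{1,1}(\ol\Omega)}$ is \emph{not} obviously uniform in $\epsilon$ (the Hessian of $(\tilde f^{k-1}+\epsilon)^{1/(k-1)}$ picks up a term of order $\epsilon(\tilde f^{k-1}+\epsilon)^{-1-1/(k-1)}|D\tilde f|^2$, which can blow up near $\partial\Omega$), and one must instead re-trace the proof of Theorem~\ref{interior_h} and observe that every place the $C^{1,1}$ norm of $f^{1/(k-1)}$ enters — the estimates \eqref{interior-4}, \eqref{interior-5}, \eqref{interior-7} — can be bounded in terms of $\|\tilde f\|_{C^{1,1}(\ol\Omega)}$ alone by applying Lemma~\ref{Blocki} to $\tilde f$ rather than to $(f+\epsilon)^{1/(k-1)}$; the paper leaves this bookkeeping implicit. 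Your version avoids it outright, at the small cost of having to invoke uniqueness/stability of the viscosity solution to identify $\lim_\delta u_\delta$ with the $u$ already constructed from the family $u_\epsilon$. Both points you raise yourself, so this is a sound and slightly cleaner rendition of the paper's argument.
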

\begin{proof}
For any $x_0 \in \{u <\ol u\}$, there exists a neighborhood $D$ of $x_0$ satisfying $D \subset \ol D \subset \{u < \ol u\}$,
since $\{u < \ol u\}$ is open in $\Omega$. Note that $u_\epsilon$ (or a subsequence of $\{u_\epsilon\}$) converges to $u$ uniformly
on $\ol D$. Thus, there exists a positive constant $\delta$ such that $u_\epsilon - \ol u \leq - \delta$ on $\ol D$.
By the Pogorelov estimate \eqref{interior-1}, we get
\[
|D^2 u_\epsilon| \leq C \mbox{ on } \ol D
\]
for some positive constant $C$ independent of $\epsilon$. Letting $\epsilon \rightarrow 0$, we then obtain $u \in C^{1,1} (\ol D)$
and that $u \in C^{1,1} (\{u < \ol u\})$ follows immediately.
\end{proof}

\begin{lemma}\label{Blocki-prop}
Suppose \eqref{right} holds.
We have $u<\ol u$ on the domain consisting of $\{f>0\}\cap\Omega$ and those
connected components of $\{f=0\}\cap\Omega$, which are compact.
\end{lemma}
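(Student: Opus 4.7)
Under \eqref{right}, every connected component of $\{f=0\}\cap\Omega$ is compact in $\Omega$, so the stated set equals all of $\Omega$ and the claim reduces to proving $u<\ol u$ throughout $\Omega$. I would proceed in three stages, passing through the approximations $u_\epsilon\in C^{3,1}(\ol\Omega)$ introduced just before the lemma. The global weak bound $u\le\ol u$ comes from the $k$-Hessian comparison principle: $\sigma_k(D^2u_\epsilon)=f+\epsilon>0=\sigma_k(D^2\ol u)$ together with $u_\epsilon=\ol u=\varphi$ on $\partial\Omega$ forces $u_\epsilon\le\ol u$, and the limit $\epsilon\to 0$ transfers this to $u$.

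To upgrade to strict inequality on $\{f>0\}\cap\Omega$, I would use a local barrier. For $x_0\in\{f>0\}$ choose $B=B_r(x_0)\subset\subset\Omega$ with $f\ge\delta>0$ on $B$, and set $v(x)=\ol u(x)+c(|x-x_0|^2-r^2)$ with $c>0$ small. Then $v=\ol u$ on $\partial B$, $v(x_0)=\ol u(x_0)-cr^2<\ol u(x_0)$, and the identity
\[
\sigma_k(D^2\ol u+2cI)=\sum_{j=0}^{k}\binom{n-j}{k-j}(2c)^{k-j}\sigma_j(D^2\ol u),
\]
combined with $\sigma_k(D^2\ol u)=0$ and the a.e.\ bound $\sigma_j(D^2\ol u)\le C\|\ol u\|_{C^{1,1}}^{j}$ for $j<k$, yields $\sigma_k(D^2v)\le C_1c$. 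Fixing $c$ with $C_1c\le\delta$ makes $\sigma_k(D^2v)\le f+\epsilon$ on $B$; since $u_\epsilon\le\ol u=v$ on $\partial B$, comparison on $B$ gives $u_\epsilon\le v$ there, and letting $\epsilon\to 0$ produces $u(x_0)\le\ol u(x_0)-cr^2<\ol u(x_0)$.

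For a compact connected component $K$ of $\{f=0\}\cap\Omega$, I would isolate $K$ topologically. Pick a compact neighborhood $V\subset\subset\Omega$ of $K$; in the compact Hausdorff space $W=\{f=0\}\cap V$, connected components coincide with quasi-components, so $K$ admits arbitrarily small clopen neighborhoods in $W$. Combined with normality of $V$, this yields disjoint open sets $U\supset K$ and $U'\supset W\setminus K$ in $V$ with $\ol U\cap\ol{U'}=\emptyset$; in particular $\partial U\subset\{f>0\}$ and $K\subset U\subset\subset\Omega$. By the previous step, $u<\ol u$ uniformly on the compact set $\partial U$, so $u_\epsilon\le\ol u-\delta_0/2$ on $\partial U$ for some $\delta_0>0$ and $\epsilon$ small. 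Inside $U$ one has $\sigma_k(D^2u_\epsilon)=f+\epsilon>0=\sigma_k(D^2(\ol u-\delta_0/2))$ and $u_\epsilon\le\ol u-\delta_0/2$ on $\partial U$, so comparison propagates the gap to all of $U$; sending $\epsilon\to 0$ gives $u\le\ol u-\delta_0/2<\ol u$ on $U$, hence on $K$.

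The main obstacle I expect is the last step: in principle other components of $\{f=0\}$ can cluster near $K$, so there need not be a metric neighborhood of $K$ disjoint from $\{f=0\}$. The argument relies essentially on the theorem that in a compact Hausdorff space connected components equal quasi-components, which is exactly what the compactness assumption \eqref{right} enables. A secondary technical point is justifying the comparison arguments when $\ol u$ is only $C^{1,1}$; this is routine via viscosity methods or by a standard smoothing of $\ol u$.
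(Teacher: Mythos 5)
The paper gives no proof of its own; it refers the reader to Proposition~4.1 of B{\l}ocki \cite{Blocki03b}, and your argument is essentially a reconstruction of that proof: weak comparison to get $u\le\ol u$ globally; a local quadratic barrier together with the Newton-type expansion of $\sigma_k(D^2\ol u+2cI)$ to force strict inequality on $\{f>0\}$; and, for a compact component $K$ of $\{f=0\}\cap\Omega$, topological isolation of $K$ by a subdomain $U\subset\subset\Omega$ with $\partial U\subset\{f>0\}$, after which comparison with $\ol u-\delta_0/2$ propagates the gap into $U$. This is sound, and the Newton identity you quote is correct. One sentence needs repair: you say normality produces disjoint open sets with $U'\supset W\setminus K$, but $W\setminus K$ is in general not closed in $W$ (other components may accumulate on $K$, exactly the phenomenon you flag), so $K$ and $W\setminus K$ cannot generally be separated. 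What the quasi-component theorem actually gives is a clopen $A$ in $W$ with $K\subset A\subset W\cap\mathrm{int}(V)$; since $A$ and $W\setminus A$ are disjoint compacta, normality separates those, and $A\subset U$, $\ol U\cap(W\setminus A)=\emptyset$ already give $\partial U\cap W=\emptyset$, which is all you used. Finally, the $C^{1,1}$ comparisons are, as you note, routine; they are handled most cleanly via the $k$-Hessian measure comparison principle (for $k$-convex $C^{1,1}$ functions the Hessian measure $\mu_k[\cdot]$ equals $\sigma_k(D^2\cdot)\,dx$), since a pointwise a.e.\ inequality by itself does not yield a maximum principle.
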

The proof Lemma \ref{Blocki-prop} follows almost the same arguments of Proposition 4.1 of \cite{Blocki03b}. The reader is referred to \cite{Blocki03b}
for more details.

Theorem \ref{hessian} follows by combining Lemma \ref{add-9} and Lemma \ref{Blocki-prop}.

\section{Asymptotic Plateau type problem}

In this section, we consider the asymptotic Plateau type problem in hyperbolic space $\mathbb{H}^{n+1}$.
%We are concerned with the existence of a complete, strictly locally convex hypersurface $\Sigma$ with prescribed $k$-curvature
%and asymptotic boundary at infinity.
%First we note that, according to Theorem 1.1 in \cite{GSS09},
%any complete, strictly locally convex hypersurface $\Sigma$ can be represented as the (vertical) graph $M_{u}$ of a function $u$ satisfying
%that $u > 0$ in $\Omega$, $u=0$ on $\partial \Omega$ for some domain $\Omega \subset \mathbb{R}^n$ and $u^2 + |x|^2$ is strictly convex.
We shall use the half space model for hyperbolic space
\[\mathbb{H}^{n+1} = \{ (x, x_{n+1}) \in \mathbb{R}^{n+1} \big\vert x_{n+1} > 0\}, \]
endowed with the metric
\[ d s^2 = \frac{1}{x_{n+1}^2} \sum_{i = 1}^{n+1} d x_i^2. \]

Suppose $\Omega \subset \mathbb{R}^n \times \{0\} \cong \mathbb{R}^n$ is a domain and
$\Gamma := \partial \Omega = \{\Gamma_1, \ldots, \Gamma_m\}$ is a disjoint collection of smooth closed $(n - 1)$ dimensional submanifolds at
\[ \partial_{\infty} \mathbb{H}^{n+1} := \mathbb{R}^n \times \{0\} \cong \mathbb{R}^n. \]
In this section,
$\tilde{g}$, $\tilde{h}$, $\tilde{\kappa}=\{\tilde{\kappa}_{1}, \ldots, \tilde{\kappa}_{n}\}$ and $\tilde{\nabla}$ will always denote the metric, second fundamental form, principal curvatures and Levi-Civita connections of the hypersurface
$M_{u} := \{(x, u(x)): x \in \Omega\}$ in $\mathbb{H}^{n+1}$.
%\begin{definition}
%	A $C^2$ regular hypersurface $M\subset \mathbb{H}^{n+1}$ is called $k$-convex if its principal curvature vector
%	$\kappa(X)\in \ol{\Gamma}_k$ for all $X\in M$. A function $u \in C^2 (\Omega)$ is called admissible if its graph
%	is $k$-convex.
%\end{definition}
We consider prescribed $k$-th Weingarten curvature equation
\begin{equation} \label{eq2-1}
	\left\{ \begin{aligned}
		\sigma_{k} ( \tilde{\kappa} [ M_{u} ] ) &= f(x, u) \quad & \mbox{in} \,\, \Omega, \\
		u  &=    0 \quad & \mbox{on} \,\, \Gamma
	\end{aligned} \right.
\end{equation}
in $\mathbb{H}^{n+1}$. Similar to the above sections, a function $u \in C^2 (\Omega)$ is called admissible if $\tilde{\kappa} [M_{u}] \in \Gamma_k$
in $\Omega$. In this section, we assume that there exists an admissible subsolution $\ul u \in C^4 (\Omega) \cap C^0 (\ol \Omega)$ satisfying
\begin{equation} \label{eq2-2}
	\left\{ \begin{aligned}
		\sigma_{k} ( \tilde{\kappa} [ M_{\ul u} ] ) & \geq f(x, \ul u) \quad & \mbox{in} \,\, \Omega, \\
		\ul u  & =    0 \quad & \mbox{on} \,\, \Gamma.
	\end{aligned} \right.
\end{equation}
For $\epsilon > 0$, let
\[
\Gamma_\epsilon := \{x \in \Omega: \ul u (x) = \epsilon\},\ \ \Omega_\epsilon := \{x \in \Omega: \ul u (x) > \epsilon\}.
\]
$\Gamma_\epsilon$ is also assumed to be a regular boundary of $\Omega_\epsilon$ when $\epsilon$ is sufficiently small, namely,
$\Gamma_\epsilon \in C^4$ is $(n-1)$-dimensional, and $|D \ul u| > 0$ on $\Gamma_\epsilon$. Sui \cite{Sui19} introduced the following
approximated problem
\begin{equation} \label{eq2-3}
	\left\{ \begin{aligned}
		\sigma_{k} ( \tilde{\kappa} [ M_{u} ] ) &= f(x, u) \quad & \mbox{in} \,\, \Omega_\epsilon, \\
		u  & =    \epsilon \quad & \mbox{on} \,\, \Gamma_\epsilon.
	\end{aligned} \right.
\end{equation}
The existence of smooth solutions to \eqref{eq2-3}
and its uniform $C^1$ estimates (independent of $\epsilon$) were derived by Sui-Sun \cite{SuiSun22a} so that
the existence of Lipschitz continuous solutions to \eqref{eq2-1} was proved.
Later, they demonstrated the existence of smooth solutions to \eqref{eq2-1} in some special cases
by establishing a Pogorelov type estimate in \cite{SuiSun22} as follows. Suppose $\ol u$ is
an admissible solution of the homogenous problem
\begin{equation}\label{homo}
	\left\{ \begin{aligned}
		\sigma_{k} ( \tilde{\kappa} [ M_{u} ] ) &= 0 \quad & \mbox{in} \,\, \Omega, \\
		u  &=    0 \quad & \mbox{on} \,\, \Gamma.
	\end{aligned} \right.
\end{equation}
Let $u_\epsilon$ be a solution of \eqref{eq2-3} and $\epsilon_0 > 0$ sufficiently small. Define
\[
\Omega_{\epsilon_0}^\epsilon = \{x \in \Omega_\epsilon: (\ol u^2 - u_\epsilon^2) (x) > c\}, \mbox{ for }
   \epsilon < \frac{\epsilon_0}{2}.
\]
It was shown in \cite{SuiSun22} that there exists a positive constant $c$ depending on $\epsilon_{0}$ but independent of $\epsilon$
such that
$\ol u^2-u_\epsilon^2-c>0$ in $\Omega_{\epsilon_{0}}^{\epsilon}$ and $\ol u^2-u_\epsilon^2-c=0$ on $\partial\Omega_{\epsilon_{0}}^{\epsilon}$.
The following theorem contains the main results of Sui-Sun \cite{SuiSun22}.

\begin{theorem}  \label{Theorem6}
		(\cite{SuiSun22}) Suppose $\Omega = \{ x \in \mathbb{R}^n: |x| < R \}$ and $0< f(x,u) \in C^{2}(\Omega\times \mathbb{R})$.
		Then the second fundamental form $\tilde{h}_\epsilon$ of the graphic hypersurface $M_{u_\epsilon}$ satisfies the
		estimates
		\begin{equation}\label{eq2-4}
			(\ol u^2 - u_\epsilon^2-c)^{\alpha} |\tilde{h}_\epsilon| \leq C,
		\end{equation}
		in $\Omega_{\epsilon_{0}}^{\epsilon}$, where $\ol u=\sqrt{R^2-|x|^2}$, $\alpha$ and $C$ are positive constants depending on $||u_\epsilon||_{C^{1}(\overline{\Omega})}$ and other known data, but independent of $\epsilon$.
\end{theorem}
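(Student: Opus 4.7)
The plan is to mimic the proof of Theorem~\ref{interior} with the hyperbolic second fundamental form $\tilde h$ in place of the Euclidean one and the gap $\rho := \ol u^2 - u_\epsilon^2 - c$ in place of $\ol u - u$. The crucial geometric fact is that the hemisphere $\ol u = \sqrt{R^2 - |x|^2}$ is totally geodesic in $\mathbb{H}^{n+1}$, so $\tilde\kappa[M_{\ol u}] \equiv 0$; this plays the role of the $(k+1)$-convexity of $\ol u$ in the Euclidean argument and, via a hyperbolic analog of Lemma~\ref{lem-4}, will produce a usable lower bound on $\sigma_k^{ii}\tilde\nabla_{ii}\rho$. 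Moreover $\rho + c = R^2 - |X|^2$ where $X$ is the Euclidean position vector of the point on $M_{u_\epsilon}$, which is easy to differentiate along the hypersurface.

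First, I would introduce the test function
\[
W = \rho^\alpha\, (v - a)^{-1} \exp\bigl(b|X|^2/2\bigr)\, \tilde h_{\xi\xi},
\]
where $v$ is a hyperbolic support-type angle function bounded below by $2a > 0$ via the uniform $C^1$ estimate for $u_\epsilon$ established in \cite{SuiSun22a}, and $\alpha = \max\{3, k-1\}$. By construction $\rho > 0$ inside $\Omega_{\epsilon_0}^\epsilon$ and $\rho = 0$ on $\partial\Omega_{\epsilon_0}^\epsilon$, so the maximum of $W$ is attained at an interior point $X_0$; after diagonalizing $\tilde h_{ij}$ at $X_0$ with $\tilde\kappa_1 \geq \cdots \geq \tilde\kappa_n$ and taking $\xi = e_1$, the critical-point conditions on $\log W$ yield analogs of \eqref{p1}--\eqref{p2}. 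One then applies $L := \sigma_k^{ii}\tilde\nabla_{ii}$ to each of the three pieces. For $L\rho$, the hyperbolic Gauss formula applied to $R^2 - |X|^2$, combined with $\sigma_k^{ii}\tilde h_{ii} = kf$ and the totally geodesic property of $M_{\ol u}$, gives $L\rho \geq -C(f+\sum\sigma_k^{ii})$. For $Lv$, the Weingarten equation and the constant ambient curvature $-1$ give an inequality involving $-\sigma_k^{ii}\tilde\kappa_i^2$ plus lower-order terms. For $L\tilde h_{11}$, the Ricci identity (with the extra curvature terms from sectional curvature $-1$), the Codazzi equation, and the concavity computation \eqref{m72} yield the good term $\sum_{i\geq 2}\sigma_{k-2;i1}(\tilde\nabla_i \tilde h_{11})^2$ minus $C\tilde h_{11} f$; because $f > 0$ is smooth, Lemma~\ref{Blocki} is not required and $(\tilde\nabla_1 f)^2/f$ is controlled by a uniform constant.

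Substituting these into $LW \leq 0$ in logarithmic form, I would split into the same two cases as in Section~4. In Case~1 ($\tilde\kappa_k \geq \varepsilon \tilde\kappa_1$) the inequality $\sum\sigma_k^{ii}\tilde\kappa_i^2 \geq \theta \tilde\kappa_1^2 \sum\sigma_k^{ii}$ together with the Newton--MacLaurin bound \eqref{nm} absorbs every bad term. In Case~2 ($\tilde\kappa_k < \varepsilon \tilde\kappa_1$), Lemma~\ref{lem-3} applied to $\sigma_{k-1;i}$ cancels the dangerous third-order term $\sigma_k^{ii}(\tilde\nabla_i \tilde h_{11})^2/\tilde h_{11}^2$ against $\tfrac{2}{\tilde h_{11}}\sum_{i\geq 2}\sigma_{k-2;i1}(\tilde\nabla_i \tilde h_{11})^2$, exactly as in \eqref{case-2}. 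Fixing $b$ and $a$ small so that $b - Cb^2 > 0$ and $a/(v-a) - C\epsilon_0 > \delta_0 > 0$, one concludes $\rho^\alpha \tilde h_{11} \leq C$ at $X_0$, with $C$ independent of $\epsilon$. The main obstacle I expect is obtaining the clean lower bound on $L\rho$: since $\rho$ is the \emph{squared} height gap rather than the height gap itself, its hyperbolic second derivatives along $M_{u_\epsilon}$ couple the ambient Euclidean position vector with the hyperbolic connection of $M_{u_\epsilon}$, and one must verify that the resulting cross terms are absorbed into $b\sum\sigma_k^{ii}$ uniformly as $\epsilon\to 0$. Once this step is handled, the remainder is a careful bookkeeping of the hyperbolic curvature corrections, each of which is controlled by the uniform $C^1$ bound on $u_\epsilon$.
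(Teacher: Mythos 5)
The paper itself does not prove Theorem~\ref{Theorem6}: it cites this result from \cite{SuiSun22} and then proves an improvement, Theorem~\ref{Theorem6'}, so it is the latter proof that your attempt should be measured against. Your overall plan---a Pogorelov test function plus the Case~1/Case~2 split---is the right skeleton, and you are correct that Lemma~\ref{Blocki} is unnecessary here since $f$ is bounded away from zero on $\overline{\Omega_{\epsilon_0}^\epsilon}$. But there is a genuine gap in the test function you choose. You propose $W = \rho^\alpha (v - a)^{-1} \exp(b|X|^2/2)\, \tilde h_{\xi\xi}$, copying the Euclidean template of Section~4. On the ball, however, $|X|^2 = |x|^2 + u^2 = R^2 - c - \rho$, so $\exp(b|X|^2/2) = e^{b(R^2-c)/2}\, e^{-b\rho/2}$; since $\rho$ is bounded, this factor is just a bounded function of $\rho$ and the test function degenerates, up to constants, to $\rho^\alpha (v-a)^{-1}\tilde h_{\xi\xi}$. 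In the Euclidean argument the factor $\exp(b|X|^2/2)$ was essential: two differentiations produce the positive term $b\sum\sigma_k^{ii}$ (modulo $O(f)$) that absorbs every lower-order $\sum\sigma_k^{ii}$ term. With your choice that positivity vanishes, and the remaining negative contributions (the $-v\sum\sigma_k^{ii}$ from $Lv$, the Cauchy--Schwarz losses from the gradient identity, and the coefficients involving $a$) cannot all be beaten. The paper's proof of Theorem~\ref{Theorem6'} uses $\exp(\beta/u)$ as the third factor instead; feeding \eqref{eq2-24} into the second-derivative test produces the indispensable positive contribution $\frac{\beta}{u}\sum\sigma_k^{ii}$, which plays the role that $b\sum\sigma_k^{ii}$ played in the Euclidean case and which cannot be replaced by any function of $\rho$ alone.

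A secondary inaccuracy: you estimate $L\rho \geq -C(f + \sum\sigma_k^{ii})$, but the $-C\sum\sigma_k^{ii}$ error is precisely the kind of term you no longer have the positivity to absorb. The sharper bound, which follows immediately from the computation in Lemma~\ref{lem-4'} once one notes that $(\ol u^2 + |x|^2)_{st} \equiv 0$ for the ball, is $\sigma_k^{ii}\tilde\nabla_{ii}\rho \geq -Cf + \frac{2}{u}\sigma_k^{ii}\tilde\nabla_i u\,\tilde\nabla_i\rho$, with no $\sum\sigma_k^{ii}$ error at all; the remaining gradient term is handled by the first-order critical-point condition. Fixing both of these---replacing $\exp(b|X|^2/2)$ by $\exp(\beta/u)$ and tightening the $L\rho$ estimate---would bring your outline into alignment with the paper's argument for the more general Theorem~\ref{Theorem6'}.
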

They needed the condition that the domain $\Omega$ is a ball because their methods require that the function $\ol u^2+|x|^2$ should be affine.
Our technique can improve their results as follows.
\begin{theorem}  \label{Theorem6'}
	Suppose $\Omega$ is bounded and $0< f^{1/(k-1)}(x,u) \in C^{2}(\Omega\times \mathbb{R})$.	
	Additionally, assume that there exists an admissible solution $\ol u$ of \eqref{homo},
	such that $\ol u^2+|x|^2$ is $(k+1)$-convex for $k < n$ or convex for $k=n$.	
	Then the estimate \eqref{eq2-4} holds and furthermore, the constants in \eqref{eq2-4} are independent of the lower bound of $f$.
\end{theorem}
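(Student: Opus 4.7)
I would mirror the proof of Theorem \ref{interior}, using the test function
\begin{equation*}
W = \rho^{\alpha} (v-a)^{-1} \exp\bigl\{\tfrac{b}{2}|X|^2\bigr\} \tilde h_{\xi\xi},
\end{equation*}
with $\rho := \ol u^{2} - u_{\epsilon}^{2} - c$ (positive on $\Omega_{\epsilon_{0}}^{\epsilon}$ and vanishing on its boundary), $v$ the hyperbolic analog of the Euclidean height-related quantity $\langle \tilde\nu, \epsilon_{n+1}\rangle$ bounded below by $2a$ via the uniform $C^{1}$ bound of Sui--Sun, and $b$ a small positive constant to be fixed. Suppose $W$ attains its maximum at an interior point $X_{0}\in M_{u_{\epsilon}}$. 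After rotating so that $(\tilde h_{ij})$ is diagonal with $\tilde\kappa_{1}\geq\cdots\geq\tilde\kappa_{n}$ at $X_{0}$, one differentiates $\alpha\log\rho-\log(v-a)+b|X|^{2}/2+\log\tilde h_{11}$ twice to obtain the direct analogs of \eqref{p1} and \eqref{p2}.

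The decisive new ingredient is a lower bound on $\sigma_{k}^{ii}\tilde\nabla_{ii}\rho$ that is independent of $\inf f$. I would write $\rho=P-Q-c$ with $P:=\ol u^{2}+|x|^{2}$ and $Q:=u_{\epsilon}^{2}+|x|^{2}$, so that $\sigma_{k}^{ii}\tilde\nabla_{ii}\rho=\sigma_{k}^{ii}\tilde\nabla_{ii}P-\sigma_{k}^{ii}\tilde\nabla_{ii}Q$. Since $P$ is $(k+1)$-convex (or convex if $k=n$) by hypothesis, a hyperbolic counterpart of Lemma \ref{lem-4} applied to $P$ along $M_{u_{\epsilon}}$ should yield $\lambda\bigl(\tilde\nabla^{2}P-\tilde\nu(P)\tilde h\bigr)\in\ol\Gamma_{k}$ up to error terms controlled by $\|\ol u\|_{C^{1,1}}$ coming from the conformal factor of the half-space metric. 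This produces $\sigma_{k}^{ii}\tilde\nabla_{ii}P\geq k\tilde\nu(P)f-C\sum\sigma_{k}^{ii}$, while $\sigma_{k}^{ii}\tilde\nabla_{ii}Q$ is controlled directly via the equation $\sigma_{k}(\tilde\kappa)=f$. Combining these gives $\sigma_{k}^{ii}\tilde\nabla_{ii}\rho\geq -Cf-C\sum\sigma_{k}^{ii}$, the hyperbolic replacement for \eqref{m1}.

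With this in hand, the rest of the proof is a transcription of Section 4. Lemma \ref{Blocki} applied to $\psi=f^{1/(k-1)}$ gives $(\tilde\nabla_{1}f)^{2}/f\leq Cd^{-2}f^{1-1/(k-1)}$. The commutation formula \eqref{Commutation formula} and the concavity of $\sigma_{k}^{1/k}$ extract the positive term $\sum_{i\geq 2}2\sigma_{k-2;i1}(\tilde\nabla_{i}\tilde h_{11})^{2}$ from $-\sigma_{k}^{ij,pq}\tilde\nabla_{1}\tilde h_{ij}\tilde\nabla_{1}\tilde h_{pq}$. One then splits into Case 1 ($\tilde\kappa_{k}\geq\varepsilon\tilde\kappa_{1}$), where $\sigma_{k}^{kk}\tilde\kappa_{k}^{2}$ together with the generalized Newton--MacLaurin inequality \eqref{nm} applied with $(l,r,s)=(k-1,k,1)$ dominates the bad terms; and Case 2 ($\tilde\kappa_{k}<\varepsilon\tilde\kappa_{1}$), where Lemma \ref{lem-3} together with the critical-point identity \eqref{p1} cancels the $\tilde\nabla_{i}\tilde h_{11}$ terms for $i\geq 2$, leaving a single $\sigma_{k}^{11}$-inequality that closes the estimate. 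This yields $\rho^{\alpha}\tilde h_{11}\leq C$ with $\alpha=\max\{3,k-1\}$.

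The principal obstacle is precisely the hyperbolic analog of Lemma \ref{lem-4}: quantifying how Euclidean $(k+1)$-convexity of $P=\ol u^{2}+|x|^{2}$ descends to a useful $\sigma_{k}$-inequality along the hyperbolic graph $M_{u_{\epsilon}}$. The conformal factor $x_{n+1}^{-2}$ of the half-space metric enters nontrivially in passing from Euclidean second derivatives to hyperbolic covariant ones, and one must verify that the extra lower-order terms it produces are absorbable into the barrier-independent constants (with the case $k=n$ requiring only convexity because then $\sigma_{k}^{ij,pq}$ generates no negative off-diagonal contributions). Once this bookkeeping is handled, the Blocki-type lemma and the generalized Newton--MacLaurin inequality do exactly the work they did in the Euclidean Pogorelov estimate of Section 4, so the independence of the bound on $\inf f$ transfers to the hyperbolic setting.
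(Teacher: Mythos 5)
Your decomposition $\rho = P - Q - c$ with $P = \ol u^2 + |x|^2$, $Q = u_\epsilon^2 + |x|^2$ is exactly the idea behind the paper's Lemma \ref{lem-4'}, and your use of B{\l}ocki's lemma, the generalized Newton--MacLaurin inequality, and the two-case split all parallel the paper's argument. However, there are two concrete problems.

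\textbf{The test function is wrong.} You propose the weight $\exp\{b|X|^2/2\}$ with $b$ small, mirroring Section 4. In Euclidean space this produces the good term $b\sum\sigma_k^{ii}$ because $\nabla_{ij}(|X|^2/2) = g_{ij} + \langle X,\nu\rangle h_{ij}$. But in the hyperbolic half-space model the identity established in the proof of Lemma \ref{lem-4'} gives $u^2\gamma^{is}\gamma^{jt}(u^2+|x|^2)_{st} = 2u^2 w\,\tilde h_{ij}$, so $\tilde\nabla_{ij}(|X|^2/2)$ is, to leading order, a multiple of $\tilde h_{ij}$ rather than of $\tilde g_{ij}$. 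Tracing with $\sigma_k^{ii}$ therefore yields only a multiple of $kf$ and no $\sum\sigma_k^{ii}$ term; the localizing factor does nothing. The paper instead uses $\exp\{\beta/u\}$, and via the identity \eqref{eq2-24}, $\sigma_k^{ii}\tilde\nabla_{ii} u = kuvf + \tfrac{2}{u}\sigma_k^{ii}(\tilde\nabla_i u)^2 - u\sum\sigma_k^{ii}$, this produces the needed good term $\tfrac{\beta}{u}\sum\sigma_k^{ii}$. Moreover, $\beta$ must be taken \emph{large} (the closing step is ``choosing $C\ll\beta\ll\alpha$''), not small, to absorb the various $O(1)\sum\sigma_k^{ii}$ bad terms ($\delta_2^{-1}$, $\beta^2 C/\alpha$, etc.) that arise only in the hyperbolic computation.

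\textbf{The claimed replacement for \eqref{m1} is too optimistic.} You assert $\sigma_k^{ii}\tilde\nabla_{ii}\rho \geq -Cf - C\sum\sigma_k^{ii}$. What the hyperbolic geometry actually yields (see \eqref{eq2-11}) is $\sigma_k^{ii}\tilde\nabla_{ii}\rho \geq -Cf + 2\sigma_k^{ii}\tfrac{\tilde\nabla_i u}{u}\tilde\nabla_i\rho$, with a mixed gradient term coming from the conformal Christoffel symbols. After dividing by $\rho$ this term is of order $\sum\sigma_k^{ii} + \alpha^2\sigma_k^{ii}(\tilde\nabla_i\rho)^2/\rho^2$, and the latter part cannot be swallowed into a constant: it must be carried forward and cancelled against the other $(\tilde\nabla_i\rho)^2/\rho^2$ contributions using the critical-point identity \eqref{eq2-16} (compare \eqref{eq2-21} and \eqref{eq2-23} in the paper). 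Writing it as $-C\sum\sigma_k^{ii}$ would lose a factor of $1/\rho$ that breaks the bookkeeping in both cases. Apart from these two points, your reading of the hyperbolic analogue of Lemma \ref{lem-4} and the rest of the outline is sound.
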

%\begin{remark}
%	For a function $u\in C^2(\Omega)$, it is natural to establish a connection between the $k$-convexity of function $u^2+|x|^2$ and the graph of $u$ in $\mathbb{H}^{n+1}$.
%    In fact, Theorem 1.2 in \cite{GSS09} shows that if $M_{u}$ is complete, locally strictly convex $C^2$ hypersurface in $\mathbb{H}^{n+1}$, then $u^2+|x|^2$ is strictly (Euclidean) convex.
%\end{remark}
Combining the solvability of \eqref{eq2-3} and the uniform $C^1$ estimates established in \cite{SuiSun22a}, we can improve Theorem 1.2 in \cite{SuiSun22}:
\begin{theorem}  \label{Theorem1.8}
	Let  $k < n$ and  $\Omega$ is bounded. Assume that $0 < f(x, u) \in C^{\infty} (\mathbb{H}^{n + 1})$ satisfies
\begin{equation}
\label{eq2-5}
f_u - \frac{f}{u} \geq 0,
\end{equation}
there exists an admissible function $\ul u \in C^4 (\Omega) \cap C^0 (\ol \Omega)$ satisfying \eqref{eq2-2} and
\begin{equation}
\label{eq2-6}
- \lambda (D^2 \ul u) \in \Gamma_{k+1} \mbox{ near } \Gamma,
\end{equation}
and the compatibility conditions (see the introduction of \cite{SuiSun22a}) hold.
In addition, suppose that there exists an admissible solution $\ol u$ of homogeneous problem \ref{homo} such that $\ol u^2+|x|^2$ is $(k+1)$-convex.
Then there exists a unique admissible solution $u \in C^{\infty}(\Omega) \cap C^0(\overline{\Omega})$ with $\ol u\geq u \geq \underline{u}$ to the asymptotic problem \eqref{eq2-1}.
\end{theorem}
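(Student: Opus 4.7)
The plan is to carry out the approximation scheme of Sui \cite{Sui19} and Sui--Sun \cite{SuiSun22a, SuiSun22}, with the new Pogorelov-type estimate of Theorem \ref{Theorem6'} replacing the more restrictive estimate of Theorem \ref{Theorem6}. For each sufficiently small $\epsilon>0$, the Sui--Sun existence theorem \cite{SuiSun22a} (which relies on the compatibility conditions and \eqref{eq2-6}) produces a unique admissible solution $u_\epsilon\in C^\infty(\overline{\Omega_\epsilon})$ of the Dirichlet problem \eqref{eq2-3}. Since $\overline u$ satisfies \eqref{homo} and $f\ge 0$, $\overline u$ is a super-solution of the equation in \eqref{eq2-3}; combined with the comparison principle (whose monotonicity is supplied by $f_u-f/u\ge 0$), this yields the uniform two-sided barrier
\[
\underline u \;\le\; u_\epsilon \;\le\; \overline u \qquad \text{in } \Omega_\epsilon,
\]
which gives the $C^0$ bound as well as the correct behavior at $\Gamma$ in the limit.

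Next, I would quote the uniform $C^1$ estimates (independent of $\epsilon$) established by Sui--Sun in \cite{SuiSun22a} on arbitrary compact subsets of $\Omega$; these require only the $C^0$ barrier and the admissible sub-solution $\underline u$. For the $C^2$ step, fix an arbitrary compact $K\subset\subset\Omega$ and choose $\epsilon_0>0$ small enough that $\overline u^2-\underline u^2>3c(\epsilon_0)$ on $K$, where $c(\epsilon_0)$ is the constant from Theorem \ref{Theorem6'}. Then $K\subset\Omega_{\epsilon_0}^\epsilon$ for every $\epsilon<\epsilon_0/2$, and Theorem \ref{Theorem6'} — applicable precisely because $\overline u^2+|x|^2$ is $(k+1)$-convex — gives a bound on $|\tilde h_\epsilon|$ on $K$ that is uniform in $\epsilon$. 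With uniform $C^0$, $C^1$ and $C^2$ bounds, the equation becomes uniformly elliptic on $K$; concavity of $\sigma_k^{1/k}$ then furnishes Evans--Krylov $C^{2,\alpha}$ bounds, and Schauder bootstraps these to uniform $C^{m,\alpha}(K)$ bounds for every $m$.

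A diagonal extraction of $u_\epsilon$ along an exhausting sequence of compact subsets of $\Omega$ then produces $u\in C^\infty(\Omega)$ solving \eqref{eq2-1} in $\Omega$, with $\underline u\le u\le\overline u$. Because $\underline u=\overline u=0$ on $\Gamma$, the squeeze forces $u\in C^0(\overline\Omega)$ with $u|_\Gamma=0$, which gives the asymptotic boundary condition. Uniqueness is then a direct consequence of the comparison principle under the hypothesis $f_u-f/u\ge 0$.

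The main obstacle is the interior $C^2$ step: one must verify that the weight $(\overline u^2-u_\epsilon^2-c)^\alpha$ in \eqref{eq2-4} stays bounded away from zero on the fixed compact $K$, with a bound independent of $\epsilon$. The upper barrier gives $\overline u^2-u_\epsilon^2\ge 0$; the quantitative non-degeneracy follows from $\overline u\ge\underline u+\delta_K$ on $K$ for some $\delta_K>0$ depending only on $K$, combined with the fact that $c=c(\epsilon_0)$ is independent of $\epsilon$. Once this is in place, the rest is the standard approximation--compactness machinery and the hypotheses \eqref{eq2-2}, \eqref{eq2-5}, \eqref{eq2-6} only enter through the existing Sui--Sun theorems; the new ingredient that makes the argument go through on general bounded $\Omega$ (rather than just balls) is the sharper Pogorelov estimate Theorem \ref{Theorem6'}.
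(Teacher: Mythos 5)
Your overall strategy --- approximate via the Sui--Sun Dirichlet problems on $\Omega_\epsilon$, obtain the two-sided barrier $\ul u \le u_\epsilon \le \ol u$, import the uniform $C^1$ estimates from \cite{SuiSun22a}, get interior $C^2$ compactness from the new Pogorelov estimate Theorem \ref{Theorem6'}, bootstrap via Evans--Krylov and Schauder, and diagonalize --- is exactly the argument the paper intends. The paper itself only says to ``combine the solvability of \eqref{eq2-3} and the uniform $C^1$ estimates in \cite{SuiSun22a}'' with Theorem \ref{Theorem6'}, so your write-up fills in the expected details along the expected route.

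There is one step, however, where the justification you give does not support the conclusion you draw. To invoke Theorem \ref{Theorem6'} on a fixed compact $K\subset\subset\Omega$ you need $K\subset\Omega_{\epsilon_0}^\epsilon$, i.e., $\ol u^2 - u_\epsilon^2 > c(\epsilon_0)$ on $K$, uniformly in small $\epsilon$. You argue this from $\ol u^2-\ul u^2>3c$ on $K$ together with $u_\epsilon\ge\ul u$. But both of those facts only bound $u_\epsilon$ from below, whereas what is needed is a \emph{refined upper} bound, $u_\epsilon\le\ol u-\delta_K$ on $K$ uniformly in $\epsilon$: from $\ul u\le u_\epsilon\le \ol u$ alone, $u_\epsilon$ could hug $\ol u$ on $K$ and the weight $\rho=\ol u^2-u_\epsilon^2-c$ could degenerate. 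The quantitative separation of $u_\epsilon$ from $\ol u$ on compact sets comes instead from comparing the equations $\sigma_k(\tilde{\kappa}[M_{u_\epsilon}])=f>0$ and $\sigma_k(\tilde{\kappa}[M_{\ol u}])=0$ --- a strong maximum principle / barrier argument using the strict positivity of $f$ on $K$ and the monotonicity \eqref{eq2-5} --- and this is precisely the content of the statement from \cite{SuiSun22} recalled just before Theorem \ref{Theorem6}, namely that $c$ can be chosen depending on $\epsilon_0$ but not on $\epsilon$ so that $\rho>0$ in $\Omega_{\epsilon_0}^\epsilon$ and $\rho=0$ on $\partial\Omega_{\epsilon_0}^\epsilon$. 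If you replace your appeal to the subsolution by an appeal to that separation estimate (or reprove it by such a barrier argument), the gap closes and the remainder of your proposal is correct and coincides with the paper's intended proof.
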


\begin{example}
	Let
\[
\Omega = \{x = (x_1, \ldots, x_n) \in \mathbb{R}^n: \sum_{i=1}^{n-k+1}x_{i}^{2} + \frac{1}{2}\sum_{j=n-k+2}^{n}x_{j}^{2} < R^2\}
\]
be an ellipsoid and $\ol u=\sqrt{R^{2}-\sum_{i=1}^{n-k+1}x_{i}^{2}-\frac{1}{2}\sum_{j=n-k+2}^{n}x_{j}^{2}}$. It is easy to check that $\ol u^2+|x|^2$ is $(k+1)$-convex and that $\ol u$ is an admissible solution of homogeneous problem
	\[
	\left\{ \begin{aligned}
		\sigma_{k} ( \tilde{\kappa} [ M_{u} ] ) &= 0 \quad & \mbox{in} \,\, \Omega, \\
		u  &=    0 \quad & \mbox{on} \,\, \Gamma.
	\end{aligned} \right.
	\]	
\end{example}
%\begin{remark}
%The nondegeneracy in Theorem \ref{Theorem1.8} is used in the boundary
%\end{remark}
%\vspace{4mm}
%\subsection{Calculations on $M_{u}$}
%\vspace{2mm}
%\begin{definition}
%	Define $(\mathbb{R}^{n+1},\overline{D}), (M_{u},\overline{\nabla}),(\mathbb{R}^{n},D)$ are spaces and connections with respect the metric in  $\mathbb{R}^{n+1}$.
%		Define  $(\mathbb{H}^{n+1},\tilde{D}), (M_{u},\nabla)$ with respect the metric in  $\mathbb{H}^{n+1}$.
%		$\tilde{\Gamma}_{ij}^{k},\Gamma_{i,j}^{k}$ are Christoffel symbols to $(\mathbb{H}^{n+1},\tilde{D}),( M_{u},\nabla)$ respectively.
%\end{definition}

In order to prove Theorem \ref{Theorem6'}, we first introduced some calculations on $M_u$. Most of their proofs can be found in \cite{Sui19}.

Let $\epsilon_{1},\ldots,\epsilon_{n+1}$ be the natural basis of $\mathbb{R}^{n+1}$.
When $M_u$ is viewed as a hypersurface in $\mathbb{H}^{n + 1}$, its unit upward normal, metric, second fundamental form are given by
%applying the Christoffel symbols in $\mathbb{H}^{n + 1}$,
\[ {\bf n} = \frac{u(-Du, 1)}{w}, \quad  \tilde{g}_{ij} = \frac{1}{u^2} ( \delta_{ij} + u_i u_j ), \quad
\tilde{h}_{ij} = \frac{1}{u^2 w} ( \delta_{ij} + u_i u_j + u u_{ij} ). \]
respectively, where $w := \sqrt{1 + |Du|^2}$.
%Without loss of generality, suppose $f(x,x_{n+1})\equiv f(x,u(x))$ for each $x_{n+1}\in \mathbb{R}$, then
%\begin{equation}\label{Blocki-h}
%	\boldsymbol{D}_{\epsilon_{i}\epsilon_{i}}f=\tilde{D}_{\epsilon_{i}\epsilon_{i}}f-\boldsymbol{\Gamma}_{ii}^{k}\tilde{D}_{k}f=\tilde{D}_{\epsilon_{i}\epsilon_{i}}f,
%\end{equation}
%which allows us to use Lemma \ref{Blocki} even if $f$ is treated in $\mathbb{H}^{n+1}$.
Therefore,
the hyperbolic principal curvatures $\tilde{\kappa} [M_{u}]$ are the eigenvalues of the symmetric matrix $A [u] = (a_{ij})$, where
\[ a_{ij}% =  u^2 \gamma^{ik} h_{kl} \gamma^{lj}
= \frac{1}{w} \gamma^{ik} ( \delta_{kl} + u_k u_l + u u_{kl} ) \gamma^{lj} =  \frac{1}{w} ( \delta_{ij} + u \gamma^{ik} u_{kl} \gamma^{lj} ), \]
%Equation \eqref{eq2-1} can be written as
%\begin{equation*}
%	f( \kappa [ u ] ) = f( \lambda( A[ u ] )) = F( A[ u ] ) =  \psi(x, u).
%\end{equation*}
where $\gamma^{ik}=\delta_{ik}-\frac{u_iu_k}{w(1+w)}$. Let $g$, $\nabla$ and $h$ denote the metric and Levi-Civita connection
and the second fundamental form on $M_u$
induced from $\mathbb{R}^{n+1}$ respectively.
We have
%From the above formulae, we observe the following relations.
\begin{equation} \label{eq1-16}
	\tilde{h}_{ij} = \frac{1}{u} h_{ij} + \frac{v}{u^2} g_{ij},
\end{equation}
where $v = \nu \cdot \partial_{n + 1}$ and $\cdot$ is the inner product in $\mathbb{R}^{n+1}$, and
%Note that formula \eqref{eq1-16} indeed holds for any local frame on any hypersurface $\Sigma$
%which may not be a graph. We also have the relation
%\begin{equation} \label{eq1-17}
%	\tilde{\kappa}_i =  u \kappa_i + v,  \quad i = 1, \ldots, n.
%\end{equation}
%
%Let $\Gamma$ and $\tilde{\Gamma}$ be the associated Christoffel symbols with $(g,\nabla)$ and $(\tilde{g},\tilde{\nabla})$ respectively. We have the following relation
%\[ \tilde{\Gamma}_{ij}^k = \Gamma_{ij}^k - \frac{1}{u} (\nabla_{i}u \delta_{kj} + \nabla_{j}u \delta_{ik} - g^{kl} \nabla_{l}u g_{ij}), \]
%we can prove that for any $w \in C^2(M_{u})$ and any local frame on $M_{u}$,
\begin{equation} \label{eq2-10}
	\tilde{\nabla}_{ij} w %= \tilde{\nabla}_{j}(\tilde{\nabla}_{i}w) - \tilde{\Gamma}_{ij}^k \nabla_{k}w
	= \nabla_{ij} w + \frac{1}{u}( \nabla_{i}u \nabla_{j}w + \nabla_{j}u \nabla_{i}w - g^{kl} \nabla_{l}u \nabla_{k}w g_{ij}),
\end{equation}
where $\tilde{\nabla}$ is the Levi-Civita connection on $M_u$ introduced from $\mathbb{H}^{n+1}$.
The following two lemmas were proved in \cite{Sui19}.
\begin{lemma}  \label{Lemma2-2}
	(\cite{Sui19}) When $M_{u}$ is viewed as a hypersurface in $\mathbb{R}^{n+1}$, we have
	\[ \tilde{\nabla}_{i}u = \tau_i \cdot \epsilon_{n + 1}, \quad \tilde{\nabla}_{i}(x^{\alpha}) = \tau_i \cdot \epsilon_{\alpha},  \quad \alpha = 1, \ldots, n, \]
	
	\[   g^{kl} \tilde{\nabla}_{k}u \tilde{\nabla}_{l}u  = |\nabla u|^2 = 1 - v^2, \]
	
	\[  \nabla_{ij} u = h_{ij} v, \quad \nabla_{ij} x^{\alpha} = h_{ij} \nu^{\alpha}, \quad \alpha = 1, \ldots, n, \]
	
	\[  \tilde{\nabla}_{i}v = - h_{ij} \,g^{j k} \tilde{\nabla}_{k}u,  \]
	and
	\[   \nabla_{ij} v = - g^{kl} ( v h_{il} h_{kj} + \tilde{\nabla}_{l}u \nabla_k h_{ij} ),  \]
	where $\cdot$ is the Euclidean inner product, $\epsilon_{1}, \ldots, \epsilon_{n+1}$ is natural basis of $\mathbb{R}^{n+1}$, $\tau_1, \ldots, \tau_n$ is any local frame on $M_{u}$, $X = (x_1, \ldots, x_n, u)$ is the position vector field on $M_{u}$, $\nu = (\nu_1, \ldots, \nu_{n + 1})$ is the unit normal vector field to $M_{u}$ in $\mathbb{R}^{n+1}$ and $v := \nu_{n+1} = \frac{1}{w}$.
\end{lemma}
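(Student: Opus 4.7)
The plan is to adapt the Pogorelov argument of Section 4 to the hyperbolic setting, using the sub-level set structure introduced by Sui--Sun. The natural test function is
\[
\log W = \alpha \log(\ol u^{2} - u_\epsilon^{2} - c) - \log(v - a) + \tfrac{b}{2}|X|^{2} + \log \tilde h_{11}
\]
on $\overline{\Omega_{\epsilon_0}^{\epsilon}}$, where $v = \langle \nu, \epsilon_{n+1}\rangle$ is the Euclidean normal component (bounded below by $2a$ thanks to the uniform $C^{1}$ estimate of \cite{SuiSun22a}), $\alpha = \max\{3, k-1\}$, and $b > 0$ is to be fixed small. Since $\ol u^{2} - u_\epsilon^{2} - c$ vanishes on $\partial \Omega_{\epsilon_0}^{\epsilon}$, the maximum is attained at an interior point $X_0 \in M_{u_\epsilon}$; after rotating a local orthonormal frame we may assume $\{\tilde h_{ij}(X_0)\}$ is diagonal with $\tilde\kappa_1 \geq \cdots \geq \tilde\kappa_n$ and that the maximizing direction is $e_1$.

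Writing out $\tilde\nabla W = 0$ and $\tilde\nabla^{2} W \leq 0$ at $X_0$ and substituting the first two covariant derivatives of the equation $\sigma_k(\tilde\kappa) = f(x,u_\epsilon)$, the computation parallels \eqref{m71}--\eqref{main}. The concavity of $\sigma_k^{1/k}$ on $\Gamma_k$ yields the favorable third-order term $\frac{2}{\tilde h_{11}}\sum_{i \geq 2}\sigma_{k-2;i1}(\tilde\nabla_i \tilde h_{11})^{2}$ from the $\sigma_k^{ij,pq}$ contribution, while the bad term $|\tilde\nabla_1 f|^{2}/f$ is tamed by applying Lemma \ref{Blocki} to $\psi = f^{1/(k-1)}$, giving $|\tilde\nabla_1 f|^{2}/f \leq C d^{-2} f^{1-1/(k-1)}$; the uniform $C^{1}$ bound on $\ol u^{2} - u_\epsilon^{2}$ then allows $d^{-2}$ to be replaced by $C(\ol u^{2} - u_\epsilon^{2} - c)^{-2}$, so the degeneracy is absorbed into the $\alpha\log(\cdot)$ factor, exactly as in Section 4.

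The principal new ingredient is the lower bound
\[
\sigma_k^{ii}\,\tilde\nabla_{ii}(\ol u^{2} - u_\epsilon^{2}) \geq -C f,
\]
which plays the role of \eqref{m1}. I would decompose $\ol u^{2} - u_\epsilon^{2} = (\ol u^{2} + |x|^{2}) - |X|^{2}$, using $X_{n+1} = u_\epsilon$ on $M_{u_\epsilon}$, and compute each piece via Lemma \ref{Lemma2-2} and \eqref{eq2-10}. The $|X|^{2}$-term contributes an expression of the form $\sigma_k^{ii}(g_{ii} + \tilde h_{ii}\langle X, \nu\rangle)$, which is controlled by $\sum \sigma_k^{ii}$ together with the already-available $kf$. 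The $(\ol u^{2} + |x|^{2})$-term is handled via a hyperbolic analog of Lemma \ref{lem-4}: the $(k+1)$-convexity of $\ol u^{2} + |x|^{2}$ in $\Omega$ forces $\lambda\bigl(\tilde\nabla^{2}(\ol u^{2} + |x|^{2}) - \nu(\ol u^{2} + |x|^{2})\,\tilde h\bigr) \in \ol\Gamma_k$, so that its contraction with $(\sigma_k^{ii})$ is nonnegative up to a term proportional to $\nu(\ol u^{2}+|x|^{2}) f$.

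With this lower bound in hand, the argument splits into two cases exactly as in Section 4: Case~1 ($\tilde\kappa_k \geq \varepsilon\tilde\kappa_1$) is closed by the generalized Newton--MacLaurin inequality \eqref{nm}, and Case~2 ($\tilde\kappa_k < \varepsilon\tilde\kappa_1$) is closed by Lemma \ref{lem-3} applied to $\sigma_{k-1;i}$, absorbing the bad term $\sigma_k^{ii}(\tilde\nabla_i \tilde h_{11})^{2}/\tilde h_{11}^{2}$ into the favorable third-order term; the choice $\alpha \geq \max\{3, k-1\}$ gives the margin in the coefficient comparison, cf.~\eqref{case-2}. The resulting bound $(\ol u^{2} - u_\epsilon^{2} - c)^{\alpha}\tilde h_{11} \leq C$ depends on $\epsilon_0$, $\|u_\epsilon\|_{C^{1}}$, $\|\ol u\|_{C^{1,1}}$, and $\|f^{1/(k-1)}\|_{C^{1,1}}$, but neither on $\epsilon$ nor on $\inf f$. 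The main obstacle I expect is precisely the hyperbolic version of Lemma \ref{lem-4}: one must carefully relate $\tilde\nabla^{2}$ on $M_{u_\epsilon}$ (with its conformal factor $u_\epsilon^{-2}$) to the Euclidean Hessian of $\ol u^{2} + |x|^{2}$ and invoke the $(k+1)$-convexity at exactly the right step; once that is done, the remainder is a careful transcription of the Euclidean calculation into hyperbolic coordinates.
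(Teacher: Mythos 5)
Your proposal does not prove the statement in question. Lemma \ref{Lemma2-2} is a list of elementary pointwise identities on $M_u$ regarded as a Euclidean hypersurface: the first derivatives of the coordinate functions $u$, $x^\alpha$ along a frame, the relation $g^{kl}\tilde\nabla_k u\,\tilde\nabla_l u = 1-v^2$, the Gauss-formula consequences $\nabla_{ij}u = h_{ij}v$ and $\nabla_{ij}x^\alpha = h_{ij}\nu^\alpha$, the Weingarten-formula consequence $\tilde\nabla_i v = -h_{ij}g^{jk}\tilde\nabla_k u$, and the second-derivative identity for $v$ obtained by differentiating the Weingarten equation and invoking Codazzi. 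A proof of this lemma should simply apply \eqref{Gauss} to the components of the position vector $X = (x_1,\dots,x_n,u)$ and the unit normal $\nu$, together with the formula $g^{ij}=\delta_{ij}-u_iu_j/w^2$ to compute $|\nabla u|^2$. (In the paper the lemma is not reproved at all; it is cited directly from \cite{Sui19}.)

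What you have written instead is an outline of the Pogorelov argument for Theorem \ref{Theorem6'}: the construction of the test function $\rho^{\alpha}(v-a)^{-1}\exp(\cdot)\tilde h_{11}$, the maximum-principle computation at $X_0$, the use of Lemma \ref{Blocki} to tame $|\tilde\nabla_1 f|^2/f$, the hyperbolic analogue of Lemma \ref{lem-4} to bound $\sigma_k^{ii}\tilde\nabla_{ii}\rho$ from below, and the two-case split via Lemma \ref{lem-3} and the Newton--MacLaurin inequality. That is a reasonable sketch of a different result in the paper, but it does not establish any of the six identities actually asserted in Lemma \ref{Lemma2-2}; indeed those identities are \emph{inputs} to that Pogorelov computation (they underlie \eqref{eq2-24}, \eqref{eq2-13}, and \eqref{eq2-31}), so using the Pogorelov argument to prove them would be circular. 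You should replace the proposal with a direct verification from the Gauss, Weingarten, and Codazzi equations.
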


%For convenience, denote
%\[ f_i = \frac{\partial f}{\partial \kappa_i} \quad \text{and} \quad F^{ij} = \frac{\partial F}{\partial a_{ij}}. \]

\begin{lemma}  \label{Lemma2-1}
	(\cite{Sui19}) Let $M_u$ be an admissible hypersurface in $\mathbb{H}^{n+1}$ satisfying
	$\sigma_{k} (\kappa [M_u]) = f$.
	Then in a local orthonormal frame on $\Sigma$, for $v = \nu_{n+1} = \frac{1}{w}$, we have
	\[
	\begin{aligned}
		\sigma_{k}^{ij} \tilde{\nabla}_{ij} v
		= & - v \sigma_{k}^{ij} \tilde{h}_{ik} \tilde{h}_{kj} + \big( 1 + v^2 \big) \sigma_{k}^{ij} \tilde{h}_{ij} - v \sum \sigma_{k}^{ii} \\
		& - \frac{2}{u^2} \sigma_{k}^{ij} \tilde{h}_{jk} \tilde{\nabla}_{i}u \tilde{\nabla}_{k}u
		 + \frac{2 v}{u^2} \sigma_{k}^{ij} \tilde{\nabla}_{i}u \tilde{\nabla}_{j}u - \frac{\tilde{\nabla}_{k}u}{u} \tilde{\nabla}_{k}f.
	\end{aligned}
	\]
\end{lemma}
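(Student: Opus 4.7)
The plan is to derive the identity by a direct tensorial computation that plays the conformal relation $\tilde{g}=u^{-2}g$ between the hyperbolic and Euclidean induced metrics on $M_{u}$ against the Euclidean-side formulas already recorded in Lemma \ref{Lemma2-2}. The three driving ingredients are: (i) the conformal change-of-Hessian formula applied to $v$, i.e.\ \eqref{eq2-10} with $w$ replaced by $v$; (ii) the relation $\tilde{h}_{ij}=u^{-1}h_{ij}+vu^{-2}g_{ij}$ from \eqref{eq1-16}, together with its inverse $h_{ij}=u\tilde{h}_{ij}-(v/u)g_{ij}$; and (iii) the Codazzi equation in $\mathbb{H}^{n+1}$ combined with the tangentially differentiated equation $\sigma_{k}^{ij}\tilde{\nabla}_{k}\tilde{h}_{ij}=\tilde{\nabla}_{k}f$.

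First I would work in a $\tilde{g}$-orthonormal frame at the point, so that $g_{ij}=u^{2}\delta_{ij}$ and $g^{ij}=u^{-2}\delta_{ij}$, and substitute into the conformal change formula both the Euclidean Hessian $\nabla_{ij}v=-g^{kl}\bigl(vh_{il}h_{kj}+\tilde{\nabla}_{l}u\,\nabla_{k}h_{ij}\bigr)$ and the gradient $\tilde{\nabla}_{i}v=-h_{ij}g^{jk}\tilde{\nabla}_{k}u$ from Lemma \ref{Lemma2-2}. Expanding $g^{kl}h_{il}h_{kj}=\tilde{h}_{ik}\tilde{h}_{kj}-2v\tilde{h}_{ij}+v^{2}\delta_{ij}$ via the inverse of \eqref{eq1-16}, and expanding $\tilde{\nabla}_{l}v=-u^{-1}\tilde{h}_{lk}\tilde{\nabla}_{k}u+vu^{-1}\tilde{\nabla}_{l}u$ in the two cross-terms $\frac{1}{u}(u_{i}v_{j}+u_{j}v_{i})$, I would contract the resulting expression with $\sigma_{k}^{ij}$. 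The identity $|\tilde{\nabla}u|^{2}_{\tilde{g}}=u^{2}(1-v^{2})$ from Lemma \ref{Lemma2-2} will be used to simplify the trace term $\sigma_{k}^{ij}\,\frac{g^{kl}u_{k}v_{l}}{u}g_{ij}$; after grouping, the cubic-in-$v$ contributions collapse to the single piece $-v\sum\sigma_{k}^{ii}$, the quadratic and linear pieces combine into $(1+v^{2})\sigma_{k}^{ij}\tilde{h}_{ij}$, and the remaining $u^{-2}$ pieces yield the two terms $-\frac{2}{u^{2}}\sigma_{k}^{ij}\tilde{h}_{jk}\tilde{\nabla}_{i}u\,\tilde{\nabla}_{k}u$ and $\frac{2v}{u^{2}}\sigma_{k}^{ij}\tilde{\nabla}_{i}u\,\tilde{\nabla}_{j}u$.

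To dispose of the third-order contribution $-\sigma_{k}^{ij}g^{kl}\tilde{\nabla}_{l}u\,\nabla_{k}h_{ij}$, I would rewrite $\nabla_{k}h_{ij}$ in terms of $\tilde{\nabla}_{k}\tilde{h}_{ij}$ via the difference-of-connections tensor $\tilde{\Gamma}-\Gamma=-u^{-1}(\delta_{k}^{l}u_{i}+\delta_{i}^{l}u_{k}-g_{ki}g^{lm}u_{m})$ applied to the $(0,2)$-tensor $h_{ij}=u\tilde{h}_{ij}-(v/u)g_{ij}$. Then the Codazzi equation $\tilde{\nabla}_{k}\tilde{h}_{ij}=\tilde{\nabla}_{i}\tilde{h}_{kj}$ (exact because $\mathbb{H}^{n+1}$ has constant sectional curvature) and the differentiated curvature equation give $\sigma_{k}^{ij}\tilde{\nabla}_{l}u\,\tilde{\nabla}_{k}\tilde{h}_{ij}=\tilde{\nabla}_{k}u\,\tilde{\nabla}_{k}f$ after a raising of index by $\tilde{g}$, producing the last term $-\frac{\tilde{\nabla}_{k}u}{u}\tilde{\nabla}_{k}f$. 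The correction tensors introduced by the connection-difference step must, and I expect will, cancel the leftover $u^{-2}\sum\sigma_{k}^{ii}\tilde{h}_{km}\tilde{\nabla}_{k}u\,\tilde{\nabla}_{m}u$ and similar pieces produced in the previous step.

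The hard part is precisely this last bookkeeping: the four sources of terms (Euclidean quadratic in $h$, gradient-type $\frac{1}{u}(u_{i}v_{j}+u_{j}v_{i})$, trace correction $\frac{1}{u}g^{kl}u_{k}v_{l}g_{ij}$, and the Codazzi conversion of $\nabla h$ into $\tilde{\nabla}\tilde{h}$) each generate a spray of polynomials in $v$ with various powers of $u$, and one must verify that after contracting with $\sigma_{k}^{ij}$ and using only the identities $g^{kl}\tilde{\nabla}_{k}u\,\tilde{\nabla}_{l}u=1-v^{2}$ and $h=u\tilde{h}-(v/u)g$, every stray term cancels and the remainder is exactly the stated six-term expression. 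Notably, the Euler-type relation $\sigma_{k}^{ij}\tilde{h}_{ij}=kf$ is not needed, since the statement is kept in un-contracted form; the cancellations are purely algebraic.
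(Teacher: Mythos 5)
The paper states this lemma without proof, attributing it to Sui \cite{Sui19}; so there is no in-paper argument to compare your proposal against. Your plan — pass through the conformal relation $\tilde g = u^{-2}g$ via \eqref{eq2-10}, substitute the Euclidean identities for $\nabla_{ij}v$ and $\tilde\nabla_i v$ from Lemma \ref{Lemma2-2}, convert $h$ to $\tilde h$ using \eqref{eq1-16} and its inverse, and trade $\nabla_k h_{ij}$ for $\tilde\nabla_k\tilde h_{ij}$ via the connection-difference tensor — is sound, and the cancellations you anticipate do in fact close. Concretely, in a $\tilde g$-orthonormal frame the quadratic-in-$h$ term contributes $-v\sigma_k^{ij}\tilde h_{ik}\tilde h_{kj}+2v^2\sigma_k^{ij}\tilde h_{ij}-v^3\sum\sigma_k^{ii}$; the two cross terms $\frac{1}{u}(u_iv_j+u_jv_i)$ produce exactly $-\frac{2}{u^2}\sigma_k^{ij}\tilde h_{jk}\tilde\nabla_iu\tilde\nabla_ku+\frac{2v}{u^2}\sigma_k^{ij}\tilde\nabla_iu\tilde\nabla_ju$; the trace correction gives $(-v+v^3)\sum\sigma_k^{ii}+u^{-2}\tilde h_{kl}\tilde\nabla_ku\tilde\nabla_lu\sum\sigma_k^{ii}$; and the connection-difference expansion of $-\sigma_k^{ij}g^{kl}\tilde\nabla_lu\,\nabla_kh_{ij}$ supplies $(1-v^2)\sigma_k^{ij}\tilde h_{ij}$, the term $-\frac{\tilde\nabla_ku}{u}\tilde\nabla_kf$, and precisely the pieces $\mp\frac{2}{u^2}\sigma_k^{ij}\tilde h_{jk}\tilde\nabla_iu\tilde\nabla_ku$ and $-u^{-2}\tilde h_{kl}\tilde\nabla_ku\tilde\nabla_lu\sum\sigma_k^{ii}$ needed to cancel the extras. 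One small correction: the Codazzi equation is not actually required in the final step — once $\nabla_k h_{ij}$ is converted to $\tilde\nabla_k\tilde h_{ij}$, the identity $\sigma_k^{ij}\tilde\nabla_k\tilde h_{ij}=\tilde\nabla_k f$ follows directly from differentiating $\sigma_k(\tilde h)=f$ without any commutation. Your sketch leaves the bookkeeping as an expectation rather than a completed computation, but the expectation is justified; writing out the seven contributions explicitly, as above, would turn the sketch into a full proof.
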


Similar to Lemma \ref{lem-4}, we have the following lemma.
\begin{lemma}
	\label{lem-4'}
	Suppose the function $\ol u^2+|x|^2 \in C^2 (\Omega)$ is $(k+1)$-convex
	, $u_\epsilon$ is a admissible solution of \eqref{eq2-3} and $c$ is a constant. Let $\rho=\ol u^2-u_\epsilon^2-c$,
and $\rho(X)=\rho(x,u):=\rho(x)$. We have
	\begin{equation}
		\label{add-1'}
		\lambda\Big(\tilde{\nabla}_{ij}\rho - \frac{\tilde{\nabla}_{i}u\tilde{\nabla}_{j}\rho+\tilde{\nabla}_{j}u\tilde{\nabla}_{i}\rho}{u} + (2u^2 w - \boldsymbol{n}(\rho)) \tilde{h}_{ij}\Big) \in \ol{\Gamma}_k
	\end{equation}
    	where $e_1, \ldots, e_n$ is a local orthonormal frame, $\tilde{h}$ is the second fundamental form and $\boldsymbol{n}$ is the unit normal vector field to $M_{u}$ in $\mathbb{H}^{n+1}$.
%	Then there exsits a constant $c_{H}=c_{H}(||u||_{C^{1}},||v||_{C^{1}})$, such that
%	\begin{equation}
%		\label{add-1'}
%		\nabla_{ij}v\geq  h_{ij}\boldsymbol{n}(v)-c_{H} , \mbox{ for any } x \in \Omega,
%	\end{equation}
%	where $\boldsymbol{n}$ is the upward unit normal vector and $h$ is the second fundamental form of $M_u$.
\end{lemma}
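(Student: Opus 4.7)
The plan is to reduce Lemma \ref{lem-4'} to Lemma \ref{lem-4} applied to the $(k+1)$-convex function $F(x) := \ol u^2 + |x|^2$, then transport the Euclidean output to the hyperbolic expression in \eqref{add-1'} via the conformal change $\tilde g = u^{-2} g$ between the induced metrics on $M_u$. Writing $F = \rho + u^2 + |x|^2 + c$ should produce precisely the shape term $2u^2 w\,\tilde h_{ij}$ together with the first-order and trace corrections visible in the statement.

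First, applying Lemma \ref{lem-4} to $F$ gives $\lambda_g(\nabla^2 F - \nu(F)\,h) \in \ol{\Gamma}_k$ on $M_u$ in the Euclidean setting, where $\nabla, h, \nu$ are the Euclidean-induced connection, second fundamental form, and upward unit normal. In the Euclidean orthonormal frame $e_i = \sum_j\gamma^{ij}(\epsilon_j + u_j\epsilon_{n+1})$, the identity $\nabla^2_{ij}\phi - \nu(\phi)h_{ij} = D^2\phi(e_i,e_j)$ (valid for any $\phi$ extended trivially in $x_{n+1}$), together with the cancellation $\sum_\alpha\gamma^{i\alpha}u_\alpha = u_i/w$, yields
\[
\nabla^2_{ij}(u^2+|x|^2) - \nu(u^2+|x|^2)\,h_{ij} = 2\delta_{ij} + 2uw\,h_{ij}.
\]
Hence $\lambda_g(A) \in \ol{\Gamma}_k$ where $A_{ij} := \nabla^2_{ij}\rho + (2uw - \nu(\rho))\,h_{ij} + 2\delta_{ij}$.

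Second, I would identify the $(0,2)$-tensor $A$ with the matrix $\tilde A$ of \eqref{add-1'} once the latter is read in the hyperbolic orthonormal frame $\tilde e_i := u e_i$. The key ingredients are: the conformal Hessian formula
\[
\tilde\nabla^2_{ij}\phi = u^2\nabla^2\phi(e_i,e_j) + u\bigl(\nabla_i u\,\nabla_j\phi + \nabla_j u\,\nabla_i\phi\bigr) - u\,g(\nabla u,\nabla\phi)\,\delta_{ij};
\]
the frame-scaled version $\tilde h(\tilde e_i,\tilde e_j) = u\,h(e_i,e_j) + v\,\delta_{ij}$ of \eqref{eq1-16}; the relation $\mathbf n = u\nu$, hence $\mathbf n(\rho) = u\nu(\rho)$; and the identity $g(\nabla u,\nabla\rho) = -v\nu(\rho)$, which follows from the tangential decomposition $\nabla u = \epsilon_{n+1} - v\nu$ on $M_u$. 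Combined with $wv = 1$, these force the cross first-order terms to cancel the $(\tilde\nabla_i u\,\tilde\nabla_j\rho + \tilde\nabla_j u\,\tilde\nabla_i\rho)/u$ subtraction and collapse the $\delta_{ij}$-contributions to exactly $2u^2\delta_{ij}$, yielding the tensor identity $\tilde A_{ij} = u^2 A_{ij}$.

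The conclusion is then immediate: since $\tilde g = u^{-2}g$ and $\tilde A = A$ as $(0,2)$-tensors, the $\tilde g$-eigenvalues of $\tilde A$ are $u^2$ times the $g$-eigenvalues of $A$ and therefore remain in $\ol{\Gamma}_k$. The main obstacle I anticipate is purely computational book-keeping: one must track every conformal factor and frame-change carefully, so that the correction $-g(\nabla u,\nabla\rho)\delta_{ij}$ from the conformal Hessian, the trace piece $v\delta_{ij}$ hidden in $\tilde h$, the algebra $wv=1$, and the identity $g(\nabla u,\nabla\rho) + v\nu(\rho) = 0$ conspire to leave precisely the $2u^2\delta_{ij}$ term inherited from $|x|^2$.
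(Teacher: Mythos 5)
Your proposal is correct, but it takes a genuinely different route from the paper. The paper extends $\ol u^2 + |x|^2$ trivially to a neighborhood of $M_u$ in $\mathbb{H}^{n+1}$ and expands $\tilde\nabla_{ij}\rho = \mathbf{D}_{ij}\rho + \tilde h_{ij}\,\boldsymbol{n}(\rho)$ directly using the hyperbolic Christoffel symbols ${\bf\Gamma}_{ij}^k$, landing after several lines of algebra on the identity
\[
\tilde{\nabla}_{ij}\rho - \frac{\tilde{\nabla}_{i}u\,\tilde{\nabla}_{j}\rho+\tilde{\nabla}_{j}u\,\tilde{\nabla}_{i}\rho}{u} + \big(2u^2 w - \boldsymbol{n}(\rho)\big) \tilde{h}_{ij} = u^{2}\gamma^{is}\gamma^{jt}(\ol u^2+|x|^2)_{st},
\]
and then invokes Ivochkina's projection lemma once. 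You instead stay in the Euclidean picture first: you apply Lemma \ref{lem-4} (whose proof itself relies on the same projection lemma), decompose $F=\ol u^2+|x|^2$ as $\rho+(u^2+|x|^2)+c$, and compute the exact contribution $\nabla^2_{ij}(u^2+|x|^2)-\nu(u^2+|x|^2)h_{ij}=2\delta_{ij}+2uw\,h_{ij}$; then you transport to the hyperbolic expression via the conformal Hessian formula for $\tilde g = u^{-2}g$, the frame scaling $\tilde h(\tilde e_i,\tilde e_j)=u\,h(e_i,e_j)+v\,\delta_{ij}$, and the identities $\boldsymbol{n}=u\nu$, $wv=1$, and $g(\nabla u,\nabla\rho)=-v\,\nu(\rho)$. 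I have checked that these pieces do combine to give $\tilde A_{ij}=u^2 A_{ij}$ exactly, so the argument closes. What your route buys is a conceptual explanation of the specific first-order and $\tilde h$ corrections in \eqref{add-1'}: they are precisely the conformal-Hessian corrections plus the explicit contribution of $u^2+|x|^2$. What the paper's route buys is fewer moving parts: it never needs the conformal Hessian formula, the tangential decomposition $\nabla u = \epsilon_{n+1}-v\nu$, or the auxiliary identity $g(\nabla u,\nabla\rho)=-v\,\nu(\rho)$, since all cancellations happen inside a single Christoffel-symbol expansion. Either version is a valid proof.
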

\begin{proof}
Let ${\bf D}$ denote the Levi-Civita connection in $\mathbb{H}^{n+1}$. The corresponding Christoffel symbols are given by
\[	{\bf\Gamma}_{ij}^k = \,\frac{1}{X_{n + 1}} \big(- \delta_{ik} \delta_{n + 1\, j} - \delta_{kj} \delta_{n + 1 \,i} + \delta_{k\, n + 1} \delta_{ij} \big).\]
Let $\{\epsilon_1, \ldots, \epsilon_{n+1}\}$ be the natural basis of $\mathbb{R}^{n+1}$
and $\{e_1,\ldots,e_n\}$ be orthonormal frame on $TM_u$ given by
\[
e_i := u\sum_{j=1}^n \gamma^{ij} (\epsilon_j + u_j \epsilon_{n+1}), \ \ i = 1, \ldots, n,
\]
where $\gamma^{ij}=\delta_{ij}-\frac{u_iu_j}{w(1+w)}$. Let $\tau$ be the matrix $\{\gamma^{ij}\}$.
Similar to Lemma \ref{lem-4}, by Lemma 3.1 of \cite{I91} or Proposition 2.1 of \cite{JW21}, we have
$\lambda (\tau D^2 (\ol u^2+|x|^2) \tau) \in \ol{\Gamma}_k$ and furthermore
\[
	\sigma_j (\lambda (\tau D^2 (\ol u^2+|x|^2) \tau)) \geq \frac{1}{w^2} \sigma_j (\lambda (D^2 (\ol u^2+|x|^2)))\geq 0, \ \ j = 1, \ldots, k.
\]
As in the proof of Lemma \ref{lem-4}, we regard $\ol u^2+|x|^2$ as a function in a neighborhood of $M_u$ in $\mathbb{H}^{n+1}$ and we have
\[
\begin{aligned}
		\tilde{\nabla}_{ij} \rho
		%= \,&  \nabla_{i}(\nabla_{j}\rho)-\Gamma_{ij}^{k}\nabla_{k}\rho-h_{ij}\bold{D} _{\boldsymbol{n}}\rho+h_{ij}\bold{D}_{\boldsymbol{n}}\rho\\
		= \,& \bold{D}_{ij}\rho+\tilde{h}_{ij}\boldsymbol{n}(\rho)\\
		= \,& u^{2}\gamma^{is}\gamma^{tj}\bold{D}^2 \rho (\epsilon_s+u_{s}\epsilon_{n+1}, \epsilon_t+u_{t}\epsilon_{n+1})+\tilde{h}_{ij}\boldsymbol{n}(\rho)\\
		= \,& u^{2}\gamma^{is} \gamma^{jt}\Big(\bold{D}^2 \rho (\epsilon_s, \epsilon_t) + u_{t}\bold{D}^2 \rho (\epsilon_s, \epsilon_{n+1}) + u_{s}\bold{D}^2 \rho (\epsilon_{n+1}, \epsilon_{t})\\
		    & +u_{s}u_{t}\bold{D}^2 \rho(\epsilon_{n+1}, \epsilon_{n+1})\Big) + \tilde{h}_{ij}\boldsymbol{n}(\rho)\\
		%= \,& u^{2}\gamma^{is} \gamma^{jt}\Big(\rho_{st}-\boldsymbol{\Gamma}_{st}^{\alpha}\rho_{\alpha}-u_{t}\boldsymbol{\Gamma}_{s\;n+1}^{\alpha}\rho_{\alpha}
		%-u_{s}\boldsymbol{\Gamma}_{n+1\;t}^{\alpha}\rho_{\alpha}-u_{s}u_{t}\boldsymbol{\Gamma}_{n+1\;n+1}^{\alpha}\rho_{\alpha}\Big) + h_{ij}\boldsymbol{n}(\rho)\\
		= \,& u^{2}\gamma^{is} \gamma^{jt}\Big((\ol u^2-u^2)_{st}-u_{t}\boldsymbol{\Gamma}_{s\;n+1}^{\alpha}\rho_{\alpha}
		-u_{s}\boldsymbol{\Gamma}_{n+1\;t}^{\alpha}\rho_{\alpha}\Big) + \tilde{h}_{ij}\boldsymbol{n}(\rho)\\
		 = \,& u^{2}\gamma^{is} \gamma^{jt}\Big((\ol u^2-u^2)_{st} + \frac{u_{t}\rho_{s}+u_{s}\rho_{t}}{u}
		  \Big)+ \tilde{h}_{ij}\boldsymbol{n}(\rho)\\
		 = \,& u^{2}\gamma^{is} \gamma^{jt}(\ol u^2+|x|^2)_{st} +\frac{\tilde{\nabla}_{i}u\tilde{\nabla}_{j}\rho
		 	+\tilde{\nabla}_{j}u\tilde{\nabla}_{i}\rho}{u}- u^{2}\gamma^{is} \gamma^{jt}(u^2+|x|^2)_{st}+ \tilde{h}_{ij}\boldsymbol{n}(\rho)\\
		 = \,& u^{2}\gamma^{is} \gamma^{jt}(\ol u^2+|x|^2)_{st} + \frac{\tilde{\nabla}_{i}u\tilde{\nabla}_{j}\rho+\tilde{\nabla}_{j}u\tilde{\nabla}_{i}\rho}{u}-2 u^2 w \tilde{h}_{ij}+ \tilde{h}_{ij}\boldsymbol{n}(\rho).
	\end{aligned}
\]
Therefore, \eqref{add-1'} is proved.
\end{proof}

We are ready to prove Theorem \ref{Theorem6'}.

\begin{proof}[Proof of Theorem \ref{Theorem6'}] We will drop the subscript $\epsilon$ for convenience since there is no possible confusion.
Let $\cdot$ be the Euclidean inner product and $ v := 1/w = \nu \cdot \epsilon_{n+1}$. There exists a positive constant $a$ depending only on
$\|u\|_{C^{1}(\Omega_{\epsilon_0}^\epsilon)}$ and $\epsilon_0$ but independent of $\epsilon$ such that $v\geq 2a$ on $\Omega_{\epsilon_0}^\epsilon$.
We consider the test function
\[
	W =
	\rho^\alpha (v - a)^{-1} \exp\{\frac{\beta}{u}\}\tilde{h}_{\xi\xi},
\]
where $\rho=\ol u^2 - u^2 - c$, $X\in M_u$, $\xi \in T_X M_u$ is a unit vector and $\alpha \geq 1$, $\beta > 0$ are positive
constants to be determined.
Suppose that the maximum value of $W$ on $\Omega_{\epsilon_0}^\epsilon$ is achieved at an interior point $X_{0} = (x_{0}, u(x_{0}))\in M_u$, $x_{0}\in\Omega_{\epsilon_0}^\epsilon$
and $\xi_0 \in T_{X_0} M_u$.
Let $\{e_{1}, e_{2}, \ldots, e_{n}\}$
be a local orthonormal frame with respect to hyperbolic metric
about $X_{0}$. We may assume $\xi_0 = e_1$ and
$\{\tilde{h}_{ij}\}$ is diagonal at $X_{0}$ with $\tilde{h}_{11} \geq \cdots \geq \tilde{h}_{nn}$ by a rotation if necessary.
Let $\tilde{\kappa}_i = \tilde{h}_{ii} (X_0)$ for $i = 1, \ldots, n$.
Therefore, at $X_{0}$ , taking the covariant derivatives twice with respect
to
\[ \log \tilde{h}_{11} - \log ( v - a ) + \frac{\beta}{u} + \alpha \log \rho, \]
 we have
\begin{equation} \label{eq2-16}
	\frac{\tilde{\nabla}_{i}\tilde{h}_{11}}{\tilde{h}_{11}} - \frac{\tilde{\nabla}_i v}{v - a} - \beta \frac{\tilde{\nabla}_{i}u}{u^2} + \alpha \frac{\tilde{\nabla}_{i}\rho}{\rho} = 0,
\end{equation}
\begin{equation} \label{eq2-17}
	\begin{aligned}
		\sigma_k^{ii} &\Big(\frac{\tilde{\nabla}_{ii}\tilde{h}_{11}}{\tilde{h}_{11}} - \frac{ (\tilde{\nabla}_{i}\tilde{h}_{11})^2}{\tilde{h}_{11}^2} - \frac{ \tilde{\nabla}_{ii} v}{v - a}
		+ \frac{ (\tilde{\nabla}_{i}v)^2}{(v - a)^2} \\
		&- \beta \frac{ \tilde{\nabla}_{ii} u}{u^2} + \beta  \frac{2 (\tilde{\nabla}_{i}u)^2}{u^3} + \alpha \frac{ \tilde{\nabla}_{ii} \rho}{\rho} - \alpha  \frac{(\tilde{\nabla}_{i}\rho)^2}{ \rho^2}\Big) \leq 0.
	\end{aligned}
\end{equation}
%where
%\[
%\sigma_k^{ij} := \frac{\partial \sigma_k (\lambda (h))}{\partial \tilde{h}_{ij}}.
%\]

By \eqref{eq2-10}, Lemma \ref{Lemma2-2} and Lemma \ref{Lemma2-1}, we have
\begin{equation} \label{eq2-24}
	\sigma_{k}^{ii} \tilde{\nabla}_{ii} u =  ku vf + \frac{2}{u} \sigma_{k}^{ii} (\tilde{\nabla}_{i}u)^2 - u \sum \sigma_{k}^{ii},
\end{equation}

By Lemma \ref{lem-4'} and Lemma \ref{Lemma2-1}, we have
\begin{equation}\label{eq2-11}
	\begin{aligned}
		\sigma_k^{ii} \tilde{\nabla}_{ii} \rho
%		= \,& \sigma_k^{ii}\tilde{\nabla}_{ii}(\ol u ^{2}-u^{2})\\
		\geq \,& -(2u^2 w - \boldsymbol{n}(\rho))\sigma_k^{ii}\tilde{h}_{ij}+ 2\sigma_{k}^{ii}\frac{\tilde{\nabla}_{i}u}{u}\tilde{\nabla}_{i}\rho\\
		\geq \,& -Cf+2\sigma_{k}^{ii}\frac{\tilde{\nabla}_{i}u}{u}\tilde{\nabla}_{i}\rho
	\end{aligned}
\end{equation}
and
\begin{equation} \label{eq2-13}
	\begin{aligned}
		\sigma_k^{ii} \tilde{\nabla}_{ii}v
		= & - v \sigma_k^{ii} \tilde{\kappa}_i^2 + k\big( 1 + v^2 \big) f - v \sum \sigma_k^{ii} \\
		& -   \frac{2 \sigma_k^{ii}\tilde{\kappa}_i (\tilde{\nabla}_{i}u)^2}{u^2} + \frac{2v \sigma_k^{ii} (\tilde{\nabla}_{i}u)^2}{u^2}  - \frac{\tilde{\nabla}_{i}u}{u} \tilde{\nabla}_{k}f.
	\end{aligned}
\end{equation}
%Also, by \eqref{changeorder}, we have the following commutation formula,
%\begin{equation} \label{eq2-8}
%	\tilde{h}_{ii11} = \tilde{h}_{11ii} + ( \tilde{\kappa}_i \tilde{\kappa}_1 - 1 )( \tilde{\kappa}_i - \tilde{\kappa}_1 ).
%\end{equation}
Differentiating equation \eqref{eq2-1} twice, we obtain
%Differentiating the equation \eqref{1-1} twice we get
\begin{equation}
	\label{d1-2}
	|\sigma_k^{ii}  \tilde{\nabla}_{l} \tilde{h}_{ii}| = |\tilde{\nabla}_{l}f| \leq C f^{1 - 1/(k-1)}, \mbox{ for each } l = 1, \ldots, n
\end{equation}
and
\begin{equation}
	\label{eq2-9'}
	\sigma_k^{ii} \tilde{\nabla}_{11} \tilde{h}_{ii} = \tilde{\nabla}_{11}f - \sigma_k^{ij, pq} \tilde{\nabla}_{1}\tilde{h}_{ij}\tilde{\nabla}_{1} \tilde{h}_{pq},
\end{equation}
where
\[
\sigma_k^{ij, pq} := \frac{\partial^2 \sigma_k (\lambda (h))}{\partial \tilde{h}_{ij} \partial \tilde{h}_{pq}}.
\]
By direct calculations, we have
\[
\tilde{\nabla}_{11}f\geq -C\frac{(\tilde{\nabla}_{1}f)^{2}}{f}-C\tilde{\kappa}_{1}f^{1- 1/(k-1)},
\]
provided $\tilde{\nabla}_{11}$ is sufficiently large.

Applying Lemma \ref{Blocki} to $\psi = f^{1/(k-1)}$ as before, we get
\[
	\frac{(\tilde{\nabla}_{1}f)^2}{f} \leq \frac{C}{d^2} f^{1- 1/(k-1)},
\]
where $d := \mathrm{dist} (x, \partial \Omega_{\epsilon_0}^\epsilon)$ and the constant $C$ depends only on
$\|f^{1/(k-1)}\|_{C^{1,1} (\ol \Omega_{\epsilon_0}^\epsilon \times [-\mu_0-1, \mu_0+1])}$, where $\mu_0 := \sup_{\ol \Omega_{\epsilon_0}^\epsilon} |u|$.
Since
\[
\frac{|\ol u^2 - u^2- c|}{|x-x_{0}|} \leq \sup_{\Omega_{\epsilon_0}^\epsilon}|D(\ol u^2 - u^2- c)|
\]
for $x_{0}\in\partial\Omega_{\epsilon_{0}}^{\epsilon}$. Thus,
similar to \eqref{super-1}, we have
\[
	\label{add-6'}
	0 \leq \ol u^2 - u^2- c\leq B d \mbox{ in } \Omega_{\epsilon_{0}}^{\epsilon},
\]
where $B := \sup_{\Omega_{\epsilon_0}^\epsilon}|D(\ol u^2 - u^2- c)|$.
Consequently, we may assume $\tilde{\kappa}_{1} d^2 \geq 1$ for otherwise we are done.
Combining with \eqref{eq2-9'} we find
\begin{equation}
	\label{eq2-9}
	\frac{1}{\tilde{\kappa}_{1}}\sigma_k^{ii} \tilde{\nabla}_{11} \tilde{h}_{ii} \geq -Cf^{1- 1/(k-1)} -\frac{1}{\tilde{\kappa}_{1}} \sigma_k^{ij, pq} \tilde{\nabla}_{1}\tilde{h}_{ij}\tilde{\nabla}_{1} \tilde{h}_{pq}.
\end{equation}
Noting that $\{\tilde{h}_{ij}(X_{0})\}$ is diagonal, as before, we have, at $X_{0}$,
\[
\sigma_k^{ij, pq} = \left\{ \begin{aligned}
	\sigma_{k - 2; ip} (\tilde{\kappa}) & \;\;\mbox{ if }~ i=j, p=q, i \neq q, \\
	- \sigma_{k - 2; ip} (\tilde{\kappa}) & \;\;\mbox{ if }~ i=q, j=p, i \neq j, \\
	0 & \;\;\mbox{ otherwise.}
\end{aligned} \right.
\]
By the concavity of $\sigma_k^{1/k}$ in $\Gamma_k$, we have
\[
\sum_{i\neq j} \sigma_{k-2; ij} \tilde{\nabla}_1 \tilde{h}_{ii} \tilde{\nabla}_1 \tilde{h}_{jj}
\leq \left(1- \frac{1}{k}\right) \frac{1}{f} (\sigma_{k-1;i} \tilde{\nabla}_1 \tilde{h}_{ii})^2 = \left(1-\frac{1}{k}\right) \frac{(\tilde{\nabla}_{1}f)^2}{f}.
\]
Thus, by Codazzi equation we have
\begin{equation}
	\label{m72-h}
	\begin{aligned}
		- \sigma_k^{ij, pq} \tilde{\nabla}_1 \tilde{h}_{ij} \tilde{\nabla}_1 \tilde{h}_{pq}
		= \,& \sum_{i \neq j} \sigma_{k-2; ij} (\tilde{\nabla}_1 \tilde{h}_{ij})^2 - \sum_{i\neq j} \sigma_{k-2; ij} \tilde{\nabla}_1 \tilde{h}_{ii} \tilde{\nabla}_1 \tilde{h}_{jj}\\
		\geq \,& \sum_{i \neq j} \sigma_{k-2; ij} (\tilde{\nabla}_1 \tilde{h}_{ij})^2 - C \frac{(\tilde{\nabla}_{1}f)^2}{f}\\
		\geq \,& \sum_{i \geq 2} 2\sigma_{k-2; i1} (\tilde{\nabla}_i \tilde{h}_{11})^2 - C \tilde{\kappa}_{1}f^{1- 1/(k-1)}.
	\end{aligned}
\end{equation}
%\begin{equation}  \label{eq2-9}
%	F^{ii} \tilde{h}_{ii11} + F^{ij, rs} \tilde{h}_{ij1} \tilde{h}_{rs1} = \psi_{11}  \geq  - C \tilde{\kappa}_1.
%\end{equation}
Since $\mathbb{H}^{n+1}$ has constant sectional curvature $-1$, by the Codazzi and Gauss
equations we have $\tilde{\nabla}_{k}h_{ij} = \tilde{\nabla}_{j}h_{ik}$ and
\begin{equation}\label{changeorder}
	\tilde{\nabla}_{11}\tilde{h}_{ii}=\tilde{\nabla}_{ii}\tilde{h}_{11}+(\tilde{\kappa}_{i}\tilde{\kappa}_{1}-1)(\tilde{\kappa}_{i}-\tilde{\kappa}_{1})
\end{equation}
at $X_0$.
Taking \eqref{changeorder}, \eqref{eq2-24}--\eqref{m72-h} into \eqref{eq2-17} yields,
\begin{equation} \label{eq2-18}
	\begin{aligned}
		& \Big( \tilde{\kappa}_1 - \frac{1 + v^2}{v - a} - \frac{\beta v}{u} - \frac{\alpha C}{\rho} \Big)kf  - C f^{1- 1/(k-1)} \\
		& + \Big( \frac{\beta}{u} + \frac{a}{v - a} -C\Big) \sum \sigma_{k}^{ii}
		+ \frac{v}{v - a} \sigma_{k}^{ii} \tilde{\kappa}_i^2  \\
		& + \frac{2}{v - a}   \sigma_{k}^{ii} \tilde{\kappa}_i \frac{(\tilde{\nabla}_{i}u)^2}{u^2}
		- \frac{2 v}{v - a}  \sigma_{k}^{ii} \frac{(\tilde{\nabla}_{i}u)^2}{u^2} - \alpha  \sigma_{k}^{ii} \frac{(\tilde{\nabla}_{i}\rho)^2}{\rho^2}\\
		&  +\frac{2 \alpha \sigma_{k}^{ii} \tilde{\nabla}_{i}u \tilde{\nabla}_{i}\rho}{u\rho} + \frac{\sigma_{k}^{ii} (\tilde{\nabla}_{i}v)^2}{(v - a)^2}
		+ \frac{2\sum_{i \geq 2} \sigma_{k-2; i1} (\tilde{\nabla}_i \tilde{h}_{11})^2 }{\tilde{\kappa}_1}\\
		& - \frac{\sigma_{k}^{ii} (\tilde{\nabla}_{i}\tilde{h}_{11})^2}{\tilde{\kappa}_1^2}  \leq 0.
	\end{aligned}
\end{equation}
By Lemma \ref{Lemma2-2} and Cauchy-Schwartz inequality, we have
\begin{equation*}
	\begin{aligned}
		& \frac{2}{v - a}  \sigma_{k}^{ii} \tilde{\kappa}_i \frac{(\tilde{\nabla}_{i}u)^2}{u^2} -  \frac{2 v}{v - a}  \sigma_{k}^{ii} \frac{(\tilde{\nabla}_{i}u)^2}{u^2} \\
		\geq & - \frac{2}{v - a}  \sigma_{k}^{ii} |\tilde{\kappa}_i| - \frac{2}{v - a}  \sum \sigma_{k}^{ii} \\
		\geq & -  \frac{4}{ a (v - a)}  \sum \sigma_{k}^{ii}  - \frac{a}{4 (v - a)} \sigma_{k}^{ii} \tilde{\kappa}_i^2 - \frac{2}{v - a} \sum \sigma_{k}^{ii}.
	\end{aligned}
\end{equation*}
Hence, \eqref{eq2-18} reduces to
\begin{equation} \label{eq2-18-1}
	\begin{aligned}
		& \Big( \tilde{\kappa}_1 - \frac{1 + v^2}{v - a} - \frac{\beta v}{u} - \frac{\alpha C}{\rho} \Big) kf- C f^{1-1/(k-1)} \\
		& + \Big( \frac{\beta}{u} + \frac{a - 2 - 4 a^{- 1}}{v - a}-C \Big)\sum \sigma_{k}^{ii}
		+ \frac{3 a}{4(v - a)} \sigma_{k}^{ii} \tilde{\kappa}_i^2  \\
		& - \alpha \sigma_{k}^{ii} \frac{(\tilde{\nabla}_{i}\rho)^2}{\rho^2} +\frac{2 \alpha \sigma_{k}^{ii} \tilde{\nabla}_{i}u \tilde{\nabla}_{i}\rho}{u \rho}
		+ \frac{\sigma_{k}^{ii}(\tilde{\nabla}_{i}v)^2}{(v - a)^2} \\
		& + \frac{2\sum_{i \geq 2} \sigma_{k-2; i1} (\tilde{\nabla}_i \tilde{h}_{11})^2 }{\tilde{\kappa}_1}- \frac{\sigma_{k}^{ii} (\tilde{\nabla}_{i}\tilde{h}_{11})^2}{\tilde{\kappa}_1^2}  \leq 0.
	\end{aligned}
\end{equation}
We consider two cases as before.

\noindent
\textbf{Case 1.} $\tilde{h}_{kk}\geq \varepsilon \tilde{h}_{11}$  for some $\varepsilon>0$ to be chosen.
We have
\[
	\sum \sigma_{k}^{ii}\tilde{h}_{ii}^{2}>\sigma_{k}^{kk}\tilde{h}_{kk}^{2}\geq \theta \tilde{h}_{11}^{2}\sum \sigma_{k}^{ii}
\]
as \eqref{case-1} in Section 4.

By \eqref{eq2-16} and Cauchy-Schwartz inequality, we have
\begin{equation} \label{eq2-20}
	\begin{aligned}
		& \frac{\tilde{\nabla}_{i}\sigma_{k}^{ii}(\tilde{\nabla}_{i}\tilde{h}_{11})^2}{\tilde{\kappa}_1^2} \leq (1 + \delta_1) \sigma_{k}^{ii} \frac{(\tilde{\nabla}_{i}v)^2}{(v - a)^2} \\
		& + 2 (1 + \delta_1^{- 1}) \beta^2 \sigma_{k}^{ii} \frac{(\tilde{\nabla}_{i}u)^2}{u^4} + 2 (1 + \delta_1^{- 1}) \alpha^2 \sigma_{k}^{ii} \frac{(\tilde{\nabla}_{i}\rho)^2}{\rho^2},
	\end{aligned}
\end{equation}
where $\delta_1$ is a positive constant to be determined later.
Also, we have
\begin{equation} \label{eq2-21}
	\frac{2 \alpha \sigma_{k}^{ii} \tilde{\nabla}_{i}u \tilde{\nabla}_{i}\rho}{u \rho}
	\geq  - \sum \sigma_{k}^{ii} - \alpha^2 \sigma_{k}^{ii} \frac{ (\tilde{\nabla}_{i}\rho)^2}{\rho^2}.
\end{equation}
Taking \eqref{eq2-20} and \eqref{eq2-21} into \eqref{eq2-18-1},  we obtain
\begin{equation} \label{eq2-22}
	\begin{aligned}
		& \Big( \tilde{\kappa}_1  - \frac{1 + v^2}{v - a} - \frac{\beta v}{u} - \frac{\alpha C}{\rho} \Big) kf - C f^{1-1/(k-1)} \\
		& + \Big( \frac{\beta}{u} + \frac{a - 2 - 4 a^{- 1}}{v - a} - 1- 2 (1 + \delta_1^{- 1}) \frac{\beta^2}{u^2}\\
		& - \frac{ C \alpha + C (3 + 2 \delta_1^{- 1}) \alpha^2 }{\rho^2} -C\Big) \sum \sigma_{k}^{ii} \\
		& + \frac{3 a}{4 (v - a)}  \sigma_{k}^{ii} \tilde{\kappa}_i^2
		- \delta_1 \frac{\sigma_{k}^{ii} (\tilde{\nabla}_{i}v)^2}{(v - a)^2} \leq 0.
	\end{aligned}
\end{equation}
By Lemma \ref{Lemma2-2} and \eqref{eq1-16}, we know that
\[ \tilde{\nabla}_{i}v = \frac{\tilde{\nabla}_{i}u}{u} (v - \tilde{\kappa}_i), \]
and consequently,
\begin{equation} \label{eq2-31}
	\frac{\sigma_{k}^{ii} \tilde{\nabla}_{i}v^2}{(v - a)^2} \leq \frac{2}{(v - a)^2} \sigma_{k}^{ii} \tilde{\kappa}_i^2 + \frac{2}{(v - a)^2} \sum \sigma_{k}^{ii}.
\end{equation}
Taking \eqref{eq2-31} into \eqref{eq2-22} and choosing $\delta_1 = \frac{a^2}{8}$, we have
\begin{equation*}
	\begin{aligned}
		& \Big( \tilde{\kappa}_1  - \frac{1 + v^2}{v - a} - \frac{\beta v}{u} - \frac{\alpha C}{\rho} \Big) kf - C f^{1-1/(k-1)}  +  \frac{a}{2 (v - a)}  \sum \sigma_{k}^{ii} \tilde{\kappa}_i^2  \\
		& + \Big( \frac{\beta}{u} + \frac{\frac{3}{4} a - 2 - \frac{4}{a}}{v - a} - C - \frac{2 (1 + 8 a^{- 2}) \beta^2}{u^2} - C \frac{ \alpha + (3 + 16 a^{- 2}) \alpha^2 }{\rho^2} \Big) \sum \sigma_{k}^{ii}
		\leq 0.
	\end{aligned}
\end{equation*}
By \eqref{GC2-9} and the fact $\sigma_{k}^{ii}\tilde{\kappa}_{i}^{2}>\theta\tilde{\kappa}_{1}^{2}\sum \sigma_{k}^{ii}$,
we obtain an upper bound for $\rho \tilde{\kappa}_1$.

\noindent
\textbf{Case 2.} $\tilde{h}_{kk}< \varepsilon \tilde{h}_{11}$.
Therefore we have $|\tilde{h}_{jj}|\leq C\varepsilon \tilde{h}_{11}$ for $j=k, \ldots, n$. By Lemma \ref{lem-3}, we fix
$\varepsilon$ such that
\begin{equation}\label{case-2'}
	\frac{2}{\tilde{h}_{11}}\sum_{i\geq 2}\sigma_{k-2;i1}(\tilde{\nabla}_i \tilde{h}_{11})^{2}
	-(1+\frac{a^2}{4})\sum_{i\geq 2}\sigma_{k}^{ii}\frac{(\tilde{\nabla}_{i}\tilde{h}_{11})^{2}}{\tilde{h}_{11}^{2}}\geq 0.
\end{equation}
By \eqref{eq2-16} and Cauchy-Schwartz inequality, we have
\begin{equation} \label{eq2-26}
		\sum_{i\geq 2} \sigma_{k}^{ii} \frac{(\tilde{\nabla}_{i}\rho)^2}{\rho^2} \leq
		\frac{2}{\alpha^2} \sum_{i \geq 2} \sigma_{k}^{ii} \Big( \frac{(\tilde{\nabla}_{i}v)^2}{(v - a)^2} + \frac{2 (\tilde{\nabla}_{i}\tilde{h}_{11})^2}{\tilde{h}_{11}^2} + 2 \beta^2 \frac{(\tilde{\nabla}_{i}u)^2}{u^4} \Big),
\end{equation}
\begin{equation} \label{eq2-27}
	\begin{aligned}
		& \frac{\sigma_{k}^{11} (\tilde{\nabla}_{1}\tilde{h}_{11})^2}{\tilde{h}_{11}^2} \leq
		 2 (1 + \delta_2^{- 1}) \beta^2  \sigma_{k}^{11} \frac{(\tilde{\nabla}_{1}u)^2}{u^4}
		  \\
		& + 2 (1 + \delta_2^{- 1}) \alpha^2  \sigma_{k}^{11} \frac{(\tilde{\nabla}_{1}\rho)^2}{\rho^2} +(1 + \delta_2) \sigma_{k}^{11} \frac{(\tilde{\nabla}_{1}v)^2}{(v - a)^2},
	\end{aligned}
\end{equation}
and
\begin{equation} \label{eq2-23}
	\begin{aligned}
		&  \sum_{i\geq 2}\frac{2 b \sigma_{k}^{ii} \tilde{\nabla}_{i}u \tilde{\nabla}_{i}\rho}{u \rho} \\
		\geq &  \sum_{i\geq 2} \frac{2 \sigma_{k}^{ii} \tilde{\nabla}_{i}u}{u} \Big( - \frac{\tilde{\nabla}_{i}\tilde{h}_{11}}{\tilde{h}_{11}} + \frac{\tilde{\nabla}_{i}v}{v - a} \Big) \\
		\geq & - \delta_2^{-1} \sum_{i\geq 2} \sigma_{k}^{ii} - 2 \delta_2 \sum_{i \geq 2} \sigma_{k}^{ii} \Big( \frac{(\tilde{\nabla}_{i}\tilde{h}_{11})^2}{\tilde{h}_{11}^2} + \frac{(\tilde{\nabla}_{i}v)^2}{(v - a)^2} \Big).
	\end{aligned}
\end{equation}
Taking \eqref{eq2-26}--\eqref{eq2-23} into \eqref{eq2-18-1}, we obtain
\begin{equation} \label{eq2-28}
	\begin{aligned}
		& \Big( \tilde{\kappa}_1 - \frac{1 + v^2}{v - a} - \frac{\beta v}{u} - \frac{\alpha C}{\rho} \Big) kf- C f^{1-1/(k-1)} \\
		& + \Big( \frac{\beta}{u} + \frac{a - 2 - 4 a^{- 1}}{v - a}-C-\delta_2^{-1}- \frac{4 \beta^2 }{\alpha u^2} \Big)\sum \sigma_{k}^{ii} \\
		& + \frac{3 a}{4(v - a)} \sum_{i\geq 2} \sigma_{k}^{ii} \tilde{\kappa}_i^2+ (1-2\alpha^{-1}-2\delta_{2})\sum_{i\geq 2}\frac{\sigma_{k}^{ii}(\tilde{\nabla}_{i}v)^2}{(v - a)^2}\\
		&  +  \sigma_{k}^{11}\Big(\frac{3 a}{4(v - a)}  \tilde{\kappa}_1^2 +\frac{2 \alpha  \tilde{\nabla}_{1}u \tilde{\nabla}_{1}\rho}{u \rho}-\delta_2  \frac{(\tilde{\nabla}_{1}v)^2}{(v - a)^2} \\
		& -\alpha\frac{(\tilde{\nabla}_{1}\rho)^2}{\rho^2}- 2 (1 + \delta_2^{- 1}) \beta^2  \frac{(\tilde{\nabla}_{1}u)^2}{u^4}
		- 2 (1 + \delta_2^{- 1}) \alpha^2   \frac{(\tilde{\nabla}_{1}\rho)^2}{\rho^2}	\Big)  \\
		& + \frac{2\sum_{i \geq 2} \sigma_{k-2; i1} (\tilde{\nabla}_i \tilde{h}_{11})^2 }{\tilde{\kappa}_1}- (1+2\delta_{2})\sum_{i\geq 2} \sigma_{k}^{ii}\frac{(\tilde{\nabla}_{i}\tilde{h}_{11})^2}{\tilde{\kappa}_1^2}
		\leq 0.
	\end{aligned}
\end{equation}
Since $\frac{1}{v-a}>1$, Fixing $\delta_{2}<\frac{a}{8}<\frac{1}{2}$, by \eqref{eq2-31},
\[
(1-2\alpha^{-1}-2\delta_{2})\frac{\sigma_{k}^{ii}(\tilde{\nabla}_{i}v)^2}{(v - a)^2}
%\geq -2\delta_{2}\frac{\sigma_{k}^{ii}(\tilde{\nabla}_{i}v)^2}{(v - a)^2}
\geq  -\frac{4\alpha^{-1}}{(v - a)^2}  \sigma_{k}^{ii} \tilde{\kappa}_i^2 - \frac{4\alpha^{-1}}{(v - a)^2} \sum \sigma_{k}^{ii}
\]
and
\[
\frac{1}{4}\frac{a}{v-a}>2\delta_{2}.
\]
By \eqref{case-2'}, provided $\beta>1$ and $\rho<1$, \eqref{eq2-28} becomes
\begin{equation} \label{eq2-32}
	\begin{aligned}
		& \Big( \tilde{\kappa}_1-\beta C - \frac{\alpha C}{\rho} \Big) kf- C f^{1-1/(k-1)}+ \Big( \frac{\beta}{u}- C- \delta_2^{-1}\\
		& - \frac{\beta^2 C }{\alpha} \Big)\sum \sigma_{k}^{ii} + \Big(\frac{3a}{4(v - a)}-\frac{4\alpha^{-1}}{(v - a)^2} \Big) \sum_{i\geq 2} \sigma_{k}^{ii} \tilde{\kappa}_i^2
		 \\
		&
		+  \sigma_{k}^{11}\Big(\frac{a}{2(v - a)}  \tilde{\kappa}_1^2
		  -\frac{\alpha C}{\rho^2} -2\delta_2 - 2 (1 + \delta_2^{- 1})  \frac{\beta^2 }{u^2}   \\
		&  -  (1 + \delta_2^{- 1}) \frac{\alpha^2 C  }{\rho^2}
		\Big)
		\leq 0.
	\end{aligned}
\end{equation}

Then choosing $C<<\beta <<\alpha$, we have
\[
\begin{aligned}
	& \Big( \tilde{\kappa}_1 -\frac{\alpha C}{\rho} \Big) kf+\frac{1}{2}\frac{\beta}{u}\sum \sigma_{k}^{ii}- C f^{1-1/(k-1)} \\
	& +  \sigma_{k}^{11}\Big(\frac{ 3a}{4(v - a)}  \tilde{\kappa}_1^2-\frac{\alpha C}{\rho^2}
	\Big)
	\leq 0.
\end{aligned}
\]
We thus obtain an upper bound for $\rho^{\alpha} \tilde{\kappa}_1$ by \eqref{GC2-9}.

\end{proof}

\vspace{2mm}

\bigskip

\end{document}